\newtheorem{thm}{Theorem}[section]
\newtheorem{prop}[thm]{Proposition}
\newtheorem{lem}[thm]{Lemma}
\newtheorem{cor}[thm]{Corollary}
\newtheorem{fact}[thm]{Fact}
\newtheorem{quest}[thm]{Question}
\theoremstyle{definition}
\newtheorem{defn}[thm]{Definition}
\theoremstyle{remark}
\newenvironment{claimproof}[1]{\par\noindent\emph{Proof of claim.}\space#1}{\hfill $\vartriangleleft$}
\newcommand{\Rb}{\mathbb{R}}
\newcommand{\Zb}{\mathbb{Z}}
\newcommand{\Fc}{\mathcal{F}}
\newcommand{\Uc}{\mathcal{U}}
\newcommand{\Rc}{\mathcal{R}}
\newcommand{\Vc}{\mathcal{V}}
\newcommand{\Ic}{\mathcal{I}}
\newcommand{\Pc}{\mathcal{P}}
\newcommand{\Cc}{\mathcal{C}}
\newcommand{\Xc}{\mathcal{X}}
\newcommand{\Yc}{\mathcal{Y}}
\newcommand{\Mb}{\mathbb{M}}
\newcommand{\Zc}{\mathcal{Z}}
\makeatletter \DeclareRobustCommand{\cset}{\@ifstar\star@cset\normal@cset}
\newcommand{\star@cset}[1]{{\left\llbracket#1\right\rrbracket}}
\newcommand{\normal@cset}[2][]{{\mathopen{#1\llbracket}#2\mathclose{#1\rrbracket}}}
\newcommand{\xbar}{\bar{x}}
\newcommand{\ybar}{\bar{y}}
\newcommand{\zbar}{\bar{z}}
\newcommand{\bbar}{\bar{b}}
\newcommand{\cbar}{\bar{c}}
\newcommand{\ebar}{\bar{e}}
\DeclareMathOperator{\dom}{dom}
\newcommand{\IZF}{\ifmmode\mathsf{IZF}\else$\mathsf{IZF}$\fi}
\newcommand{\CZF}{\ifmmode\mathsf{CZF}\else$\mathsf{CZF}$\fi}
\newcommand{\ZF}{\ifmmode\mathsf{ZF}\else$\mathsf{ZF}$\fi}
\newcommand{\ZFC}{\ifmmode\mathsf{ZFC}\else$\mathsf{ZFC}$\fi}
\newcommand{\AC}{\ifmmode\mathsf{AC}\else$\mathsf{AC}$\fi}
\newcommand{\AD}{\ifmmode\mathsf{AC}\else$\mathsf{AD}$\fi}
\newcommand{\BZ}{\ifmmode\mathsf{BZ}\else$\mathsf{BZ}$\fi}
\newcommand{\GCH}{\ifmmode\mathsf{GCH}\else$\mathsf{GCH}$\fi}
\newcommand{\PA}{\ifmmode\mathsf{PA}\else$\mathsf{PA}$\fi}
\newcommand{\DC}{\ifmmode\mathsf{DC}\else$\mathsf{DC}$\fi}
\newcommand{\MP}{\ifmmode\mathsf{MP}\else$\mathsf{MP}$\fi}
\newcommand{\CT}{\ifmmode\mathsf{CT}\else$\mathsf{CT}$\fi}
\newcommand{\PAx}{\ifmmode\mathsf{PAx}\else$\mathsf{PAx}$\fi}
\newcommand{\VL}{\ifmmode{\mathsf{V}=\mathsf{L}}\else$\mathsf{V}=\mathsf{L}$\fi}
\DeclareMathOperator{\Th}{Th}
\newcommand{\res}{{\upharpoonright}}
\newcommand{\e}{\varepsilon}
\newlength{\savedparindent}
\DeclareMathOperator{\tp}{tp}
\DeclareMathOperator{\cf}{cf}
\def\Ind{\setbox0=\hbox{$x$}\kern\wd0\hbox to 0pt{\hss$\mid$\hss}
\lower.9\ht0\hbox to 0pt{\hss$\smile$\hss}\kern\wd0}
\def\Notind{\setbox0=\hbox{$x$}\kern\wd0\hbox to 0pt{\mathchardef
\nn=12854\hss$\nn$\kern1.4\wd0\hss}\hbox to
0pt{\hss$\mid$\hss}\lower.9\ht0 \hbox to 0pt{\hss$\smile$\hss}\kern\wd0}
\def\ind{\mathop{\mathpalette\Ind{}}}
\newcommand{\indf}{\ind^{\!\!\textnormal{f}}}
\newcommand{\indi}{\ind^{\!\!\textnormal{i}}}
\newcommand{\indu}{\ind^{\!\!\textnormal{u}}}
\newcommand{\indK}{\ind^{\!\!\textnormal{K}}}
\newcommand{\concat}{{\frown}}%
\newcommand{\cod}{\mathrm{cod}}
\newcommand{\fst}{1^{\text{st}}}
\newenvironment{amssidewaysfigure}
  {\begin{sidewaysfigure}\vspace*{.5\textwidth}\begin{minipage}{\textheight}\centering}
  {\end{minipage}\end{sidewaysfigure}}
\renewcommand{\top}{\mathrm{top}}
\newcommand{\eval}{\mathrm{eval}}
\newcommand{\graphjoin}{\mathop{\nabla}}
\newcommand{\KL}{-Kim's lemma}%
\newcommand{\Weav}{\mathsf{Weav}}
\newcommand{\Elem}{\mathsf{Elem}}
\newcommand\footnoteref[1]{\protected@xdef\@thefnmark{\ref{#1}}\@footnotemark}
\newcommand\doublefootnoteref[2]{\protected@xdef\@thefnmark{\ref{#1},\ref{#2}}\@footnotemark}
\begin{document}

\title{A combinatorial characterization of Kim's lemma for pairs of bi-invariant types}

\address{Department of Mathematics \\
  Iowa State University \\
  396 Carver Hall \\
  411 Morrill Road \\
  Ames, IA 50011, USA}
\author{James E. Hanson}
\email{jameseh@iastate.edu}
\date{\today}

\keywords{invariant types, bi-invariant types, heir-coheirs, reliably invariant types,  combinatorial consistency-inconsistency configurations, generic stationary local character, cographs}
\subjclass[2020]{03C45}

\begin{abstract}
  We give a combinatorial consistency-inconsistency configuration that is equivalent to the failure of the following form of Kim's lemma for a given $k$:
  \begin{itemize}
  \item[$(\star)$] For any set of parameters $A$, formula $\varphi(x,b)$, and $A$-bi-invariant types $p$ and $q$ extending $\tp(b/A)$, if $\varphi(x,b)$ $k$-divides along $p$, then it divides along $q$.
  \end{itemize}
   We then give an equivalent technical variant of $(\star)$ that is non-trivial over arbitrary invariance bases. We also show that the failure of weaker versions of $(\star)$ entails the existence of stronger combinatorial configurations, the strongest of which can be phrased in terms of families of parameters indexed by arbitrary cographs (i.e., $P_4$-free graphs).

  Finally, we show that if there is an array $(b_{i,j} : i,j < \omega)$ of parameters such that $\{\varphi(x,b_{i,j}) : (i,j) \in C\}$ is consistent whenever $C \subseteq \omega^2$ is a chain (in the product partial order) and $k$-inconsistent whenever $C$ is an antichain, then there is a model $M$, parameter $b$, and $M$-coheirs $p,q \supset \tp(b/M)$ such that $q^{\otimes \omega}$ is an $M$-heir-coheir and $\varphi(x,b)$ $k$-divides along $p$ but does not divide along $q$. In doing so, we also show that this configuration entails the failure of generic stationary local character under the assumption of \GCH.

\end{abstract}

\maketitle

\section*{Introduction}
\label{sec:intro}

This paper is a direct continuation of \cite{NCTP}, which studied the comb tree property or CTP (originally introduced by Mutchnik as $\omega$-DCTP$_2$ in \cite{Mutchnik-NSOP2}). The negation of CTP, NCTP, is one of three studied mutual generalizations of NTP$_2$ and NSOP$_1$, the other two being (the negation of) the antichain tree property or NATP, introduced by Ahn and Kim in \cite{ATP-1}, and (the negation of) the bizarre tree property or NBTP, introduced by Kruckman and Ramsey in \cite{NKL}. An important aspect of a lot of this work is the behavior of certain classes of special invariant types, which feature prominently in \cite{NCTP} and in this paper.

\begin{defn} Recall the following anchors:
  \begin{itemize}
  \item $c \indf_A b$ means that $\tp(c/Ab)$ does not fork over $A$.
  \item $c \indK_A b$ means that $\tp(c/Ab)$ does not Kim-fork over $A$.
  \item $c \indi_A b$ means that $\tp(c/Ab)$ extends to an $A$-invariant type.
  \item $c \indu_A b$ mean that $\tp(c/Ab)$ is finitely satisfiable in $A$.
  \end{itemize}
  Fix an $A$-invariant type $p(x)$.
  \begin{itemize}
  \item $p(x)$ is \emph{strictly $A$-invariant} if whenever $b \models p \res A c$, $c \indf_A b$.
  \item $p(x)$ is \emph{Kim-strictly $A$-invariant} if whenever $b \models p \res A c$, $c \indK_A b$.
  \item $p(x)$ is \emph{$A$-bi-invariant} if whenever $b \models p \res A c$, $c \indi_A b$.
  \item $p(x)$ is \emph{$n$-strongly $A$-bi-invariant}\footnote{\label{foot:new}These definitions are new, although the concept of a strong heir-coheir is implicit in \cite[Fact~0.4]{NCTP}.} if $p^{\otimes n}$ is $A$-bi-invariant.
  \item $p(x)$ is \emph{strongly $A$-bi-invariant} if it is $\omega$-strongly $A$-bi-invariant.
  \item $p(x)$ is an \emph{$A$-heir-coheir} if $p(x)$ is an $A$-coheir and whenever $b \models p \res A c$, $c \indu_A b$.
  \item $p(x)$ is an \emph{$n$-strong $A$-heir-coheir}\footnoteref{foot:new} if $p^{\otimes n}$ is an $A$-heir-coheir.
  \item $p(x)$ is a \emph{strong $A$-heir-coheir}\footnoteref{foot:new} if is an $\omega$-strong $A$-heir-coheir.
  \item $p(x)$ is \emph{extendibly $A$-invariant} if for any type $q(x,\ybar)$ extending $p\res A$, $p(x) \cup q(x,\ybar)$ extends to an $A$-invariant type.
  \end{itemize}
  
\end{defn}

\cite{NCTP} also introduced a technical strengthening of Kim-strict invariance called \emph{reliable invariance} and an intermediate notion of \emph{semi-reliable invariance} (\cref{defn:semi-reliability}), which will play a role in this paper. Figure~\ref{fig:invariant} contains implications between these notions known to the author.

\begin{figure}
  \centering
\[\begin{tikzcd}
	{\text{Generically stable (fim)\footnoteref{foot:wild}}} \\
	{\text{Finitely approximated (fam)\footnoteref{foot:wild}}} & {\text{Definable coheir (dfs)\doublefootnoteref{foot:wild}{foot:definable}}} & {\text{Definable\footnoteref{foot:definable}}} \\
	& {\text{Strong heir-coheir}} & {\text{Strongly bi-invariant}} \\
	{\text{Canonical coheir\footnoteref{foot:canonical}}} & {n\text{-strong heir-coheir}} & {n\text{-strongly bi-invariant}} \\
	{\text{Reliable coheir}} & {\text{Heir-coheir}} & {\text{Bi-invariant}} \\
	{\text{Reliably invariant}} & {\text{Coheir}} \\
	{\text{Semi-reliably invariant}} & {\text{Extendibly invariant}} & {\text{Strictly invariant}} \\
	& {\text{Kim-strictly invariant}}
	\arrow[from=1-1, to=2-1]
	\arrow[from=2-1, to=2-2]
	\arrow[from=2-2, to=2-3]
	\arrow["{\text{over models}}", from=2-2, to=3-2]
	\arrow["{\text{over models}}", from=2-3, to=3-3]
	\arrow[from=3-2, to=3-3]
	\arrow[from=3-2, to=4-2]
	\arrow[from=3-3, to=4-3]
	\arrow[from=4-1, to=6-2]
	\arrow[from=4-2, to=4-3]
	\arrow[from=4-2, to=5-2]
	\arrow[from=4-3, to=5-3]
	\arrow[from=5-1, to=6-1]
	\arrow[from=5-1, to=6-2]
	\arrow[from=5-2, to=5-3]
	\arrow[from=5-2, to=6-2]
	\arrow[from=5-3, to=7-3]
	\arrow[from=6-1, to=7-1]
	\arrow[from=6-1, to=7-2]
	\arrow["{\text{over models}}", from=6-2, to=7-2]
	\arrow[from=7-1, to=8-2]
	\arrow[from=7-3, to=8-2]
\end{tikzcd}\]
  \caption{Some special classes of invariant types.}
  \label{fig:invariant}
\end{figure}
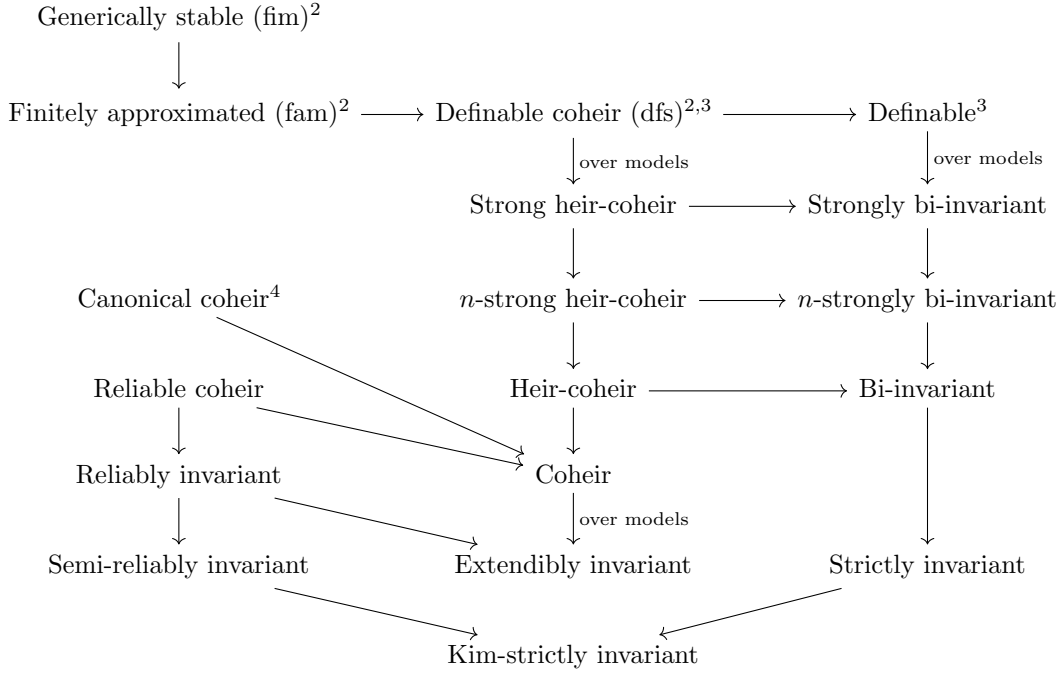
\addtocounter{footnote}{1}
\footnotetext{\label{foot:wild}See \cite{Conant2020,KeislerMeasuresWild} for an overview of generically stable and finitely approximated types as well as definable coheirs (also called dfs types).}
\addtocounter{footnote}{1}
\footnotetext{\label{foot:definable}To see that definable types (resp.\ definable coheirs) are strongly bi-invariant (resp.\ strong heir-coheirs), note that if $p(x)$ is a global $M$-definable type for a model $M$, then $p(x)$ is an heir of $p\res M$. Since $p^{\otimes n}$ is also $M$-definable for every $n$, any such type is strongly bi-invariant (resp.\ a strong heir-coheir).}
\addtocounter{footnote}{1}
\footnotetext{\label{foot:canonical}\emph{Canonical coheirs}, introduced in  \cite{Mutchnik-NSOP2}, are Kim-strictly invariant if Kim-dividing is defined in terms of coheirs rather than arbitrary invariant types or if the theory in question is NATP \cite[Rem.~5.4]{Some-Remarks-Kim-dividing-NATP}. Canonical coheirs should be closely related to the \emph{reliable coheirs} of \cite{NCTP}, as their constructions are both closely related to the broom lemma, but to the author's knowledge this has not been mapped out carefully.}

The driving philosophy of \cite{NKL} (which inspired the work in \cite{NCTP}) is that a reasonable approach to finding a mutual generalization of NTP$_2$ and NSOP$_1$ is to look at the variants of Kim's lemma that characterize these classes of theories:
\begin{itemize}
\item $T$ is NTP$_2$ if and only if whenever $\varphi(x,b)$ divides over a model $M$, it divides along any Morley sequence generated by a strictly invariant type extending $\tp(b/M)$.
\item $T$ is NSOP$_1$ if and only if whenever $\varphi(x,b)$ divides along some Morley sequence generated by an invariant type extending $\tp(b/M)$, it divides along all Morley sequences generated by invariant types extending $\tp(b/M)$.
\end{itemize}
Unlike with combinatorial consistency-inconsistency configurations, it is easy to see how to systematically generalize these statements by slotting in two classes of indiscernible sequences. Since we will be dealing with many such generalizations in this paper, we introduce systematic nomenclature for them in \cref{defn:Kim's-lemmas}.

BTP, CTP, and ATP\footnote{Listed in increasing order of strength.} all immediately fall out of failures of certain variants of Kim's lemma: If $T$ has a set of parameters $A$ and a formula $\varphi(x,b)$ that $k$-divides along some $A$-invariant type $p(y) \supset \tp(b/A)$ but not along some other $A$-invariant type $q(y) \supset \tp(b/A)$, then 
\begin{itemize}
\item if $q(y)$ is Kim-strictly $A$-invariant, then $T$ has $k$-BTP \cite[Thm.~5.2]{NKL},
\item if $A$ is a model, $p(y)$ is an $A$-coheir, and $q(y)$ is a canonical $A$-coheir, then $T$ has $k$-CTP \cite[Thm.~4.9]{Mutchnik-NSOP2},
\item if $A$ is an invariance base, $p(y)$ is extendibly $A$-invariant, and $q(y)$ is reliably $A$-invariant, then $T$ has $k$-CTP \cite[Prop.~2.6]{NCTP},
\item if $q(y)$ is $A$-bi-invariant, then $T$ has $k$-CTP \cite[Prop.~1.7]{NCTP}, and
\item if $q(y)$ is strongly $A$-bi-invariant, then $T$ has ATP \cite[Prop.~1.7]{NCTP}.\footnote{$k$-ATP is equivalent to $2$-ATP by \cite[Lem.~3.20]{Ahn2022}.}
\end{itemize}
There seem to be many such statements. We prove a three-parameter family of statements of this form in \cref{thm:failure-of-Kim-bi-invariant-to-weaves}, but to give a simpler example, the proof of \cite[Prop.~1.7]{NCTP} can be easily adapted to show the following.

\begin{defn}\label{defn:right-n-comb-tree}
  For any $n \leq \omega$, the set of \emph{right-$n$-combs} in $2^{<\omega}$ is the smallest set of subsets of $2^{<\omega}$ containing the singletons and satisfying that for any right-$n$-combs $A$ and $B$, if $\sigma$ is the greatest common initial segment of $A\cup B$, every element of $A$ extends $\sigma \concat 0$, every element of $B$ extends $\sigma \concat 1$, and $|A| \leq n$, then $A \cup B$ is a right-$n$-comb.

  A theory $T$ has the \emph{$(k,n)$-comb tree property} or \emph{$(k,n)$-CTP} if there is a formula $\varphi(x,y)$ and a tree $(b_\sigma : \sigma \in 2^{<\omega})$ such that for any right-$n$-comb $C \subseteq 2^{<\omega}$, $\{\varphi(x,b_\sigma) : \sigma \in C\}$ is consistent and for any path $P \subseteq 2^{<\omega}$, $\{\varphi(x,b_\sigma) : \sigma \in P\}$ is $k$-inconsistent.
\end{defn}

\begin{prop}\label{prop:k-n-CTP}
  If $T$ has a set of parameters $A$ and a formula $\varphi(x,b)$ such that $\varphi(x,b)$ $k$-divides along some $A$-invariant type $p(y) \supset \tp(b/A)$ but does not divide along some $n$-strongly $A$-bi-invariant type $q(y) \supset \tp(b/A)$, then $T$ has $(k,n)$-CTP. \qed
\end{prop}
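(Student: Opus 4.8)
The plan is to produce the tree $(b_\sigma : \sigma \in 2^{<\omega})$ witnessing $(k,n)$-CTP by imitating the construction in the proof of \cite[Prop.~1.7]{NCTP}, which is the $n = 1$ case. We may assume $\varphi(x,b)$ is consistent: otherwise $\neg\exists x\,\varphi(x,b)$ lies in $\tp(b/A)$, so $\{\varphi(x,b_i) : i<\omega\}$ is already $1$-inconsistent for every Morley sequence $(b_i)$, and $\varphi(x,b)$ divides along $q$, contrary to hypothesis. Working inside a monster model over $A$, the goal is a tree with the following two features: reading off the parameters along any chain $\sigma_0 \subsetneq \sigma_1 \subsetneq \cdots$ yields an initial segment of a Morley sequence generated by $p$ over $A$, and reading off the parameters along any right-$n$-comb $C$ in its left-to-right order yields an initial segment of a Morley sequence generated by $q$ over $A$. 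Granting this, the first feature together with $k$-dividing of $\varphi(x,b)$ along $p$ forces $\{\varphi(x,b_\sigma) : \sigma \in P\}$ to be $k$-inconsistent for every path $P$ — by indiscernibility, already the first $k$ terms of a $p$-Morley sequence have $k$-inconsistent instances of $\varphi$ — and the second feature together with non-dividing of $\varphi(x,b)$ along $q$ forces $\{\varphi(x,b_\sigma) : \sigma \in C\}$ to be consistent for every right-$n$-comb $C$.

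To build such a tree I would recurse through $2^{<\omega}$, processing nodes by increasing length and left-to-right within each level (e.g.\ by repeatedly ``doubling'' the partial tree), maintaining that the part built so far has the two features restricted to its own chains and right-$n$-combs. The only genuine interaction between the two requirements occurs at a branching node $\sigma$: inside any right-$n$-comb the subtree above $\sigma \concat 0$ contributes an antichain block $\bbar$ of at most $n$ parameters, which must prolong the left-to-right Morley sequence in $q$ coming from the left, while the branch through $\sigma$ and the subtree above $\sigma \concat 1$ must continue a Morley sequence in $p$ downward and a Morley sequence in $q$ to the left. This is reconciled precisely by the $n$-strong $A$-bi-invariance of $q$, i.e.\ by the $A$-bi-invariance of $q^{\otimes n}$: once $\bbar$ has been placed so that, together with the previously built parameters $\cbar$, all relevant left-to-right readings are Morley sequences in $q$ over $A$ — in particular $\bbar \models q^{\otimes n} \res A\cbar$ — bi-invariance of $q^{\otimes n}$ gives $\cbar \indi_A \bbar$, so $\tp(\cbar/A\bbar)$, and hence the type over $A\bbar$ of everything built so far, extends to an $A$-invariant type; this is exactly the slack needed to continue the construction above and to the right of $\bbar$ both $p$-generically downward and $q$-generically to the left without disturbing the maintained features. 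The point that a right-$n$-comb uses at most $n$ nodes from each such left subtree is why an $n$-fold amount of $q$-genericity — bi-invariance of $q^{\otimes n}$ rather than of $q^{\otimes\omega}$ — suffices; for $n = 1$ this is the use of bi-invariance of $q = q^{\otimes 1}$ in \cite[Prop.~1.7]{NCTP}, and the ATP analogue needs it for all finite $n$ at once.

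The hard part is precisely this compatibility step: arranging that each newly placed parameter is simultaneously $p$-generic over its branch and $q$-generic over the parameters to its left, which is where the bookkeeping of the recursion lives and where the $A$-bi-invariance of $q^{\otimes n}$ is indispensable; the remaining verifications are routine. Rather than constructing the infinite tree at one stroke — which forces one to track how the invariance data behaves over ever-larger bases — it is cleanest to build, for each $m < \omega$, a tree on $2^{\leq m}$ having the two features (recursing on $m$, each stage a copy of the previous one positioned via the bi-invariance step) and then obtain the full tree by compactness: for a fixed finite chain $C$, ``no $k$-element subset of $\{\varphi(x,b_\sigma) : \sigma \in C\}$ is consistent'' is a closed condition on the relevant finite tuple of parameters, and for a fixed finite right-$n$-comb $C$, ``$\{\varphi(x,b_\sigma) : \sigma \in C\}$ is consistent'' is an open one, so a tree meeting all of these conditions exists once all finite approximations do.
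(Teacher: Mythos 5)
The paper gives no proof of this proposition at all (the `\qed` after the statement records only that it follows by ``easily adapting'' the proof of \cite[Prop.~1.7]{NCTP}, as the surrounding text says), and your sketch is exactly that adaptation: it follows the same blueprint spelled out in full in the proof of \cref{thm:failure-of-Kim-bi-invariant-to-weaves}(\ref{bi-bi-fail}), with the tree $2^{<\omega}$ replacing the weave index set $(2^2)^{<\omega}$, paths playing the role of up-combs, finite approximations of the tree built by the doubling/cloning step, and $A$-bi-invariance of $q^{\otimes n}$ supplying the slack (via $\bbar\models q^{\otimes n}\res A\cbar \Rightarrow \cbar\indi_A\bbar$) at each branching node. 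One minor slip in the prose: the $q$-Morley enumeration of a right-$n$-comb must run \emph{right-to-left}, not left-to-right --- the block of size $\leq n$ over $\sigma\concat 0$ comes \emph{last}, realizing $q^{\otimes n}$ over the already-placed right side, which is in fact exactly what your formula $\bbar\models q^{\otimes n}\res A\cbar$ already encodes, so only the phrasing, not the argument, needs fixing.
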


This ostensibly gives a whole hierarchy of combinatorial configurations intermediate between CTP (which is $(k,1)$-CTP in the above terminology) and ATP (which is $(k,\omega)$-CTP in the above terminology), and prima facie none of these are equivalent.\footnote{Moreover it should be noted that at the moment there isn't even a known separation between NBTP and NPM$^{(2)}$, defined in \cite[Def.~6.1]{Bailetti2024}. See Figure~\ref{fig:implications}.} (We will not be studying $(k,n)$-CTP in this paper beyond the observation of \cref{prop:k-n-CTP}, although the first part of \cref{defn:right-n-comb-tree} is morally similar to \cref{defn:narrow-wide-comb}.)

Prior to this paper, the only known statement in the opposite direction as \cref{prop:k-n-CTP} (i.e., going from a combinatorial configuration to a failure of some variant of Kim's lemma) in the regime of mutual generalizations of NTP$_2$ and NSOP$_1$ was the following:
\begin{itemize}
\item If $T$ has $k$-CTP, then there is a model $M$, a formula $\varphi(x,b)$, an $M$-coheir $p(y) \supset \tp(b/M)$, and an $M$-heir-coheir $q(y) \supset \tp(b/M)$ such that $\varphi(x,b)$ $k$-divides along $p$ but does not divide along $q$ \cite[Prop.~1.5,~3.1]{NCTP}.
\end{itemize}

In this paper we define two families of combinatorial consistency-inconsistency configurations---namely \emph{$(k,m,n)$-weaves} (\cref{defn:weave}) and \emph{$k$-grids} (\cref{defn:k-grid})---and prove three new results (although two of the proofs are essentially identical) in the same direction as \cite[Prop.~1.5,3.1]{NCTP} (i.e., the opposite direction of \cref{prop:k-n-CTP}):
\begin{itemize}
\item If $T$ has a $(k,1,1)$-weave of depth $\omega$, then there is a model $M$, a formula $\varphi(x,b)$, and $M$-heir-coheirs $p(y),q(y) \supset \tp(b/M)$ such that $\varphi(x,b)$ $k$-divides along $p$ but does not divide along $q$ (\cref{prop:get-heir-coheir-heir-coheir-from-weave}). 
\item If $T$ has an infinite $k$-grid, then there is a model $M$, a formula $\varphi(x,b)$, an $M$-coheir $p(y) \supset \tp(b/M)$, and a strong $M$-heir-coheir $q(y)$ such that $\tp(b/M)$ $k$-divides along $p$ but does not divide along $q$ (\cref{thm:grid-theorem}).
\item If $T$ has an infinite $k$-grid, then there is a model $M$, a formula $\varphi(x,b)$, a strong $M$-heir-coheir $p(y) \supset \tp(b/M)$, and an $M$-coheir $q(y)$ such that $\tp(b/M)$ $k$-divides along $p$ but does not divide along $q$ (\cref{thm:grid-theorem}).
\end{itemize}

As we show in \cref{thm:failure-of-Kim-bi-invariant-to-weaves} (the aforementioned three-parameter family of statements), $(k,m,n)$-weaves are what naturally arise from an instance of a formula $k$-dividing along an $m$-strongly bi-invariant type but not some $n$-strongly bi-invariant type (and if $m=1$ or $n=1$, the type in question can be replaced with a semi-reliably invariant type),\footnote{The rationale for considering $(k,m,n)$-grids for arbitrary $m,n \leq \omega$ (rather than just $m,n \in \{1,\omega\}$) is that it adds essentially no extra technical complexity to the \emph{proof} of \cref{thm:failure-of-Kim-bi-invariant-to-weaves} (although admittedly it adds some notational and conceptual complexity to the \emph{statement} of the result), so it makes sense to record for the sake of posterity. That said, at the moment there is no known construction that produces strictly $n$-strong heir-coheirs for $n$ in the interval $(1,\omega)$.} so the story for weaves plays out in essentially the same way as the story for NCTP and NATP did in \cite{NCTP}: We are able to get an exact characterization of the presence of $(k,1,1)$-weaves of depth $\omega$ in terms of the failure of a certain form of Kim's lemma and by using (semi-)reliably invariant types we are able to find a closely related form of Kim's lemma that is non-vacuous over arbitrary invariance bases (\cref{thm:main-theorem}), but our results have the same three shortcomings. Firstly, the proof is entirely uniform in $k$, so we are unable to show that these conditions for various $k$ are equivalent. Secondly, the technique does not seem to generalize at all to building $n$-strong heir-coheirs for $n > 1$. Thirdly, there is still no sign of a technique for building a failure of Kim's lemma with regards to a (semi-)reliably invariant type and the precise relationship between (semi-)reliably invariant types and heir-coheirs remains unclear (see Figure~\ref{fig:invariant}).

The motivation for the definition of $k$-grids is merely that they are the simplest configuration the author has found for which he was able to prove \cref{thm:grid-theorem}, providing a combinatorial upper bound (modulo set-theoretic assumptions) on generic stationary local character (introduced in \cite{NCTP}) in addition to the above two mentioned failures of variants of Kim's lemma involving strong heir-coheirs.

The general picture is partially summarized in Figure~\ref{fig:implications} at the end of the paper, although a few properties are missing from the diagram for reasons of space or geometry.

\section{Weaves}%
\label{sec:configuration}

In this section we will define the main combinatorial configuration of this paper and prove some basic properties of it.

\begin{defn}
  A linear order $(L,<)$ is \emph{ordinal-like} if it is a model of the common first-order theory of ordinals%
\end{defn}

Note that any non-maximal element of an ordinal-like linear order has a successor. %

\begin{defn}$ $\label{defn:L-sequence}
  \begin{itemize}
  \item Given an ordinal-like linear order $(L,<)$ and a set $X$, we write $X^L$ for the collection of functions from $L$ to $X$.
  \item A set $A \subset L$ is a \emph{topped initial segment (of $L$)} if it is an initial segment of $L$ and $L \setminus A$ has a least element. Given a topped initial segment $A \subset L$, we write $\top(A)$ for the minimum element of $L \setminus A$.
  \item We write $X^{<L}$ for the collection of partial functions from topped initial segments of $L$ to $X$. We write $X^{\leq L}$ for $X^{< L} \cup X^L$. %
  \item Given $\sigma \in X^{<L}$ and $a \in X$, we write $\sigma \concat a$ for the element of $X^{\leq L}$ satisfying that
    \begin{itemize}
    \item $\dom(\sigma\concat a) = \dom(\sigma) \cup \{\top(\dom(\sigma))\}$,
    \item for any $i \in \dom(\sigma)$, $(\sigma \concat a)(i) = \sigma(i)$, and
    \item $(\sigma \concat a)(\top(\dom(\sigma))) = a$.
    \end{itemize}
  \end{itemize}
\end{defn}

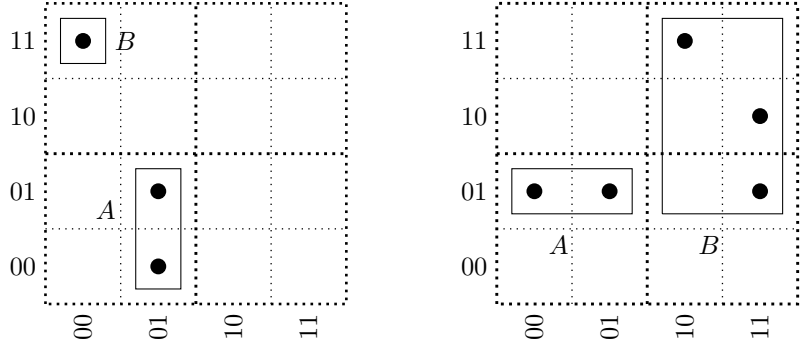
\begin{figure}
  \centering
  \begin{tikzpicture}
    \draw[dotted,line width=1pt] (0,0) -- (0,2) -- (0,4) -- (4,4) -- (4,2) -- (4,0) -- (2,0) -- cycle; %
    \foreach \i in {0,1} {
      \foreach \j in {0,1} {
        \begin{scope}[shift={(2*\i,2*\j)}]
          \draw[dotted,line width=0.5pt] (1,0) -- (1,2);
          \draw[dotted,line width=0.5pt] (0,1) -- (2,1);
        \end{scope}
      }
    }
    \foreach \i in {0,1} {
      \begin{scope}[shift={(2*\i,0)}]
        \foreach \ii in {0,1} {
          \begin{scope}[shift={(\ii,0)}]
            \node[rotate=90,left] at (0.5,0) {\i\ii};
          \end{scope}
        }
      \end{scope}
    }
    \foreach \i in {0,1} {
      \begin{scope}[shift={(0,2*\i)}]
        \foreach \ii in {0,1} {
          \begin{scope}[shift={(0,\ii)}]
            \node[left] at (0,0.5) {\i\ii};
          \end{scope}
        }
      \end{scope}
    }
    \draw[dotted,line width=1.1pt] (0,2) -- (4,2);
    \draw[dotted,line width=1.1pt] (2,0) -- (2,4);
    \fill (1.5,1.5) circle (3pt);
    \fill (1.5,0.5) circle (3pt);
    \draw (1.2,0.2) -- node[left,shift={(-4pt,7pt)}]{$A$} (1.2,1.8) -- (1.8,1.8) -- (1.8,0.2) -- cycle;
    \fill (0.5,3.5) circle (3pt);
    \draw (0.2,3.2) -- (0.2,3.8) -- (0.8,3.8) -- node[right]{$B$} (0.8,3.2) -- cycle;
    \begin{scope}[shift={(6,0)}]
    \draw[dotted,line width=1.1pt] (0,0) -- (0,2) -- (0,4) -- (4,4) -- (4,2) -- (4,0) -- (2,0) -- cycle; %
    \draw[dotted,line width=1.1pt] (0,2) -- (4,2);
    \draw[dotted,line width=1.1pt] (2,0) -- (2,4);
    \foreach \i in {0,1} {
      \foreach \j in {0,1} {
        \begin{scope}[shift={(2*\i,2*\j)}]
          \draw[dotted,line width=0.5pt] (1,0) -- (1,2);
          \draw[dotted,line width=0.5pt] (0,1) -- (2,1);
        \end{scope}
      }
    }
    \fill (0.5,1.5) circle (3pt);
    \fill (1.5,1.5) circle (3pt);
    \draw (0.2,1.2) -- node[below,shift={(-5pt,-5pt)}]{$A$} (1.8,1.2) -- (1.8,1.8) -- (0.2,1.8) -- cycle;
    \fill (3.5,1.5) circle (3pt);
    \fill (3.5,2.5) circle (3pt);
    \fill (2.5,3.5) circle (3pt);
    \draw (2.2,1.2) -- (2.2,3.8) -- (3.8,3.8) -- (3.8,1.2) -- node[below,shift={(-5pt,-5pt)}]{$B$} (2.2,1.2);

    \foreach \i in {0,1} {
      \begin{scope}[shift={(2*\i,0)}]
        \foreach \ii in {0,1} {
          \begin{scope}[shift={(\ii,0)}]
            \node[rotate=90,left] at (0.5,0) {\i\ii};
          \end{scope}
        }
      \end{scope}
    }
    \foreach \i in {0,1} {
      \begin{scope}[shift={(0,2*\i)}]
        \foreach \ii in {0,1} {
          \begin{scope}[shift={(0,\ii)}]
            \node[left] at (0,0.5) {\i\ii};
          \end{scope}
        }
      \end{scope}
    }

    \end{scope}
  \end{tikzpicture}
  \caption{$A$ narrowly below $B$ (left) and $A$ widely to the left of $B$ (right).}%
  \label{fig:narrowly-widely}
\end{figure}

The only set $X$ we will be applying \cref{defn:L-sequence} to is $2^2$, the set of pairs $(i,j)$ with $i,j < 2$. In figures and in the visually motivated terminology in this paper, we will picture $(2^2)^L$ as $2^L \times 2^L$, with the first coordinate horizontal and the second coordinate vertical.

\begin{defn}\label{defn:narrow-wide-comb}
  Given two sets $A,B \subseteq (2^2)^{L}$, we say that
  \begin{itemize}
  \item $A$ is \emph{narrowly below} $B$ (or $B$ is \emph{narrowly above} $A$) if there is a $\tau \in (2^2)^{<L}$ and an $i < 2$ such that every element of $A$ extends $\tau\concat(i,0)$ and every element of $B$ extends $\tau\concat(i,1)$.
  \item $A$ is \emph{narrowly to the left of} $B$ (or $B$ is \emph{narrowly to the right of} $A$) if there is a $\tau \in (2^2)^{<L}$ and a $j < 2$ such that every element of $A$ extends $\tau\concat(0,j)$ and every element of $B$ extends $\tau\concat(1,j)$.
  \item $A$ is \emph{widely to the left of} $B$ (or $B$ is \emph{widely to the right of} $A$) if there is a $\sigma \in (2^2)^{<L}$ such that every element of $A$ extends $\sigma\concat (0,0)$ or $\sigma \concat (0,1)$ and every element of $B$ extends $\sigma\concat (1,0)$ or $\sigma \concat (1,1)$. 
  \end{itemize}
  Define the following classes of finite subsets of $(2^2)^L$ inductively:
  \begin{itemize}
  \item The class of \emph{finite up-$n$-combs} is the smallest class containing the singletons and satisfying that if $A$ and $B$ are a finite up-$n$-combs, $|A| \leq n$, and $A$ is narrowly below $B$, then $A\cup B$ is a finite up-$n$-comb.
  \item The class of \emph{finite right-$n$-combs} is the smallest class containing the singletons and satisfying that if $A$ and $B$ are finite right-$n$-combs, $|A| \leq n$, and $A$ is narrowly to the left of $B$, then $A\cup B$ is a finite right-$n$-comb.
  \item The class of \emph{finite wide right-$n$-combs} is the smallest class containing the singletons and satisfying that if $A$ and $B$ are finite right-$n$-combs, $|A| \leq n$, and $A$ is widely to the left of $B$, then $A\cup B$ is a finite wide right-$n$-comb.
  \end{itemize}
  A \emph{(wide) right-$n$-comb} is a set $A$ satisfying that every finite $A_0 \subseteq A$ is a finite (wide) right-$n$-comb. \emph{Up-$n$-combs} are defined similarly.
\end{defn}

Obviously it would make sense to define the notion of $A$ being `widely below' $B$ as well as the notion of a finite `wide up-$n$-comb,' but we will not use these.

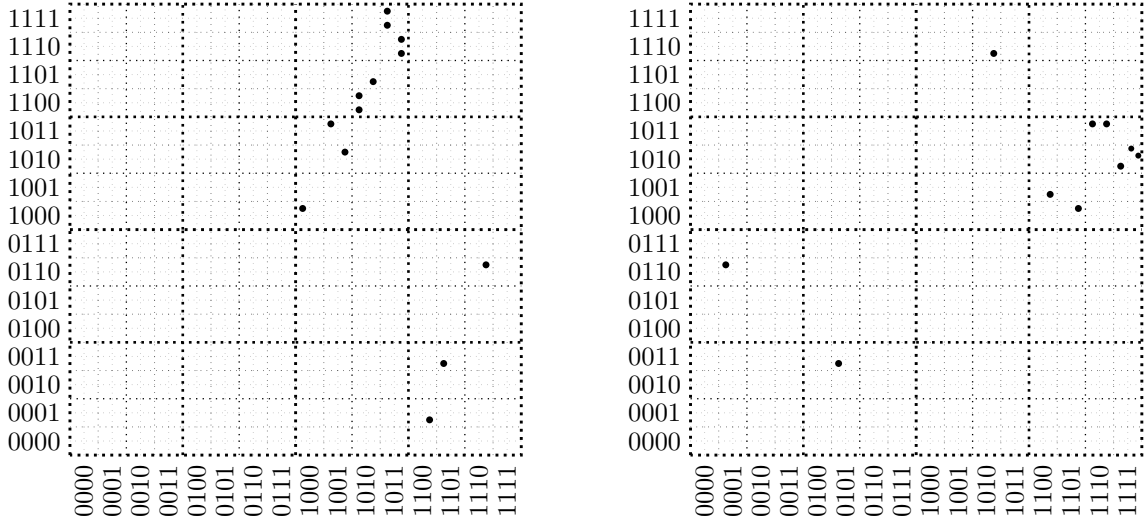
\begin{figure}
  \centering
  \begin{tikzpicture}
    \begin{scope}[scale=0.75,shift={(-11,0)}]
      \draw[dotted,line width=1.1pt] (0,0) -- (0,8) -- (8,8) -- (8,0) -- cycle;
      \draw[dotted,line width=1.1pt] (0,4) -- (8,4);
      \draw[dotted,line width=1.1pt] (4,0) -- (4,8);
      \foreach \i in {0,1} {
        \foreach \j in {0,1} {
          \begin{scope}[shift={(4*\i,4*\j)}]
            \draw[dotted,line width=1pt] (0,2) -- (4,2);
            \draw[dotted,line width=1pt] (2,0) -- (2,4);
            \foreach \ii in {0,1} {
              \foreach \jj in {0,1} {
                \begin{scope}[shift={(2*\ii,2*\jj)}]
                  \draw[dotted,line width=0.5pt] (0,1) -- (2,1);
                  \draw[dotted,line width=0.5pt] (1,0) -- (1,2);
                  \foreach \iii in {0,1} {
                    \foreach \jjj in {0,1} {
                      \begin{scope}[shift={(\iii,\jjj)}]
                        \draw[dotted,line width=0.25pt,opacity=0.75] (0,0.5) -- (1,0.5);
                        \draw[dotted,line width=0.25pt,opacity=0.75] (0.5,0) -- (0.5,1);
                        \foreach \iiii in {0,1} {
                          \foreach \jjjj in {0,1} {
                            \begin{scope}[shift={(0.5*\iiii,0.5*\jjjj)}]
                              \draw[dotted,line width=0.1pt,opacity=0.75] (0,0.25) -- (0.5,0.25);
                              \draw[dotted,line width=0.1pt,opacity=0.75] (0.25,0) -- (0.25,0.5);
                            \end{scope}
                          }
                        }
                      \end{scope}
                    }
                  }
                \end{scope}
              }
            }
          \end{scope}
        }
      }
      \foreach \i in {0,1} {
        \begin{scope}[shift={(4*\i,0)}]
          \foreach \ii in {0,1} {
            \begin{scope}[shift={(2*\ii,0)}]
              \foreach \iii in {0,1} {
                \begin{scope}[shift={(\iii,0)}]
                  \node[left,rotate=90] at (0.25,0) {$\i\ii\iii 0$};
                  \node[left,rotate=90] at (0.75,0) {$\i\ii\iii 1$};
                \end{scope}
              }
            \end{scope}
          }
        \end{scope}
      }
      \foreach \i in {0,1} {
        \begin{scope}[shift={(0,4*\i)}]
          \foreach \ii in {0,1} {
            \begin{scope}[shift={(0,2*\ii)}]
              \foreach \iii in {0,1} {
                \begin{scope}[shift={(0,\iii)}]
                  \node[left] at (0,0.25) {$\i\ii\iii 0$};
                  \node[left] at (0,0.75) {$\i\ii\iii 1$};
                \end{scope}
              }
            \end{scope}
          }
        \end{scope}
      }
      \begin{scope}[xscale=-1,rotate=90]
        \fill ($ (0.75,3.25) + (-0.125,0.125) + (0,3)$) circle (1.75pt);
        \fill ($ (0.75,3.25) + (-0.125,0.125) + (1,3.25)$) circle (1.75pt);
        \fill ($(2.75,1.75) + (-0.125,-0.125) + (0.75,5.75)$) circle (1.75pt);
        \fill ($(5.25,7.25) + (0.125,-0.125) + (-1,-3)$) circle (1.75pt);
        \fill ($(6.25,4.75) + (0.125,-0.125) + (-1,0.25)$) circle (1.75pt);
        \fill ($(6.25,4.75) + (0.125,-0.125) + (-0.5,0)$) circle (1.75pt);
        \fill ($(6.75,4.25) + (0.125,0.125) + (-0.75,0.75)$) circle (1.75pt);
        \fill ($(6.75,4.25) + (0.125,0.125) + (-0.5,0.75)$) circle (1.75pt);
        \fill ($(6.75,4.25) + (0.125,0.125) + (-0.25,1)$) circle (1.75pt);
        \fill (7.125,5.875) circle (1.75pt);
        \fill (7.375,5.875) circle (1.75pt);

        \fill ($(7.5625,5.1875) + (0.125/2,-0.125/2) + (0,0.5)$) circle (1.75pt);
        \fill ($(7.9375,5.3125) + (-0.125/2,0.125/2) + (0,0.25)$) circle (1.75pt);
      \end{scope}
    \end{scope}
    
    \begin{scope}[scale=0.75]
      \draw[dotted,line width=1.1pt] (0,0) -- (0,8) -- (8,8) -- (8,0) -- cycle;
      \draw[dotted,line width=1.1pt] (0,4) -- (8,4);
      \draw[dotted,line width=1.1pt] (4,0) -- (4,8);
      \foreach \i in {0,1} {
        \foreach \j in {0,1} {
          \begin{scope}[shift={(4*\i,4*\j)}]
            \draw[dotted,line width=1pt] (0,2) -- (4,2);
            \draw[dotted,line width=1pt] (2,0) -- (2,4);
            \foreach \ii in {0,1} {
              \foreach \jj in {0,1} {
                \begin{scope}[shift={(2*\ii,2*\jj)}]
                  \draw[dotted,line width=0.5pt] (0,1) -- (2,1);
                  \draw[dotted,line width=0.5pt] (1,0) -- (1,2);
                  \foreach \iii in {0,1} {
                    \foreach \jjj in {0,1} {
                      \begin{scope}[shift={(\iii,\jjj)}]
                        \draw[dotted,line width=0.25pt,opacity=0.75] (0,0.5) -- (1,0.5);
                        \draw[dotted,line width=0.25pt,opacity=0.75] (0.5,0) -- (0.5,1);
                        \foreach \iiii in {0,1} {
                          \foreach \jjjj in {0,1} {
                            \begin{scope}[shift={(0.5*\iiii,0.5*\jjjj)}]
                              \draw[dotted,line width=0.1pt,opacity=0.75] (0,0.25) -- (0.5,0.25);
                              \draw[dotted,line width=0.1pt,opacity=0.75] (0.25,0) -- (0.25,0.5);
                            \end{scope}
                          }
                        }
                      \end{scope}
                    }
                  }
                \end{scope}
              }
            }
          \end{scope}
        }
      }
      \foreach \i in {0,1} {
        \begin{scope}[shift={(4*\i,0)}]
          \foreach \ii in {0,1} {
            \begin{scope}[shift={(2*\ii,0)}]
              \foreach \iii in {0,1} {
                \begin{scope}[shift={(\iii,0)}]
                  \node[left,rotate=90] at (0.25,0) {$\i\ii\iii 0$};
                  \node[left,rotate=90] at (0.75,0) {$\i\ii\iii 1$};
                \end{scope}
              }
            \end{scope}
          }
        \end{scope}
      }
      \foreach \i in {0,1} {
        \begin{scope}[shift={(0,4*\i)}]
          \foreach \ii in {0,1} {
            \begin{scope}[shift={(0,2*\ii)}]
              \foreach \iii in {0,1} {
                \begin{scope}[shift={(0,\iii)}]
                  \node[left] at (0,0.25) {$\i\ii\iii 0$};
                  \node[left] at (0,0.75) {$\i\ii\iii 1$};
                \end{scope}
              }
            \end{scope}
          }
        \end{scope}
      }
      \fill ($ (0.75,3.25) + (-0.125,0.125) $) circle (1.75pt);
      \fill ($(2.75,1.75) + (-0.125,-0.125)$) circle (1.75pt);
      \fill ($(5.25,7.25) + (0.125,-0.125)$) circle (1.75pt);
      \fill ($(6.25,4.75) + (0.125,-0.125)$) circle (1.75pt);
      \fill ($(6.75,4.25) + (0.125,0.125)$) circle (1.75pt);
      \fill (7.125,5.875) circle (1.75pt);
      \fill (7.375,5.875) circle (1.75pt);

      \fill ($(7.5625,5.1875) + (0.125/2,-0.125/2)$) circle (1.75pt);
      \fill (7.8125,5.4375) circle (1.5pt);
      \fill (7.9375,5.3125) circle (1.5pt);
    \end{scope}
  \end{tikzpicture}
  \caption{An up-$3$-comb (left) and a wide right-$2$-comb (right).}
  \label{fig:right-1-comb}
\end{figure}

\begin{defn}\label{defn:weave}
  For $k< \omega$, $m,n \leq \omega$, ordinal-like $L$, and $X \subseteq (2^2)^L$, a \emph{partial $(k,m,n)$-weave for $\varphi(x,y)$ of depth $L$ (on $X$)} is a family $(b_{\sigma} : \sigma \in X)$ of parameters in the sort of $y$ such that
  \begin{itemize}
  \item for any finite up-$m$-comb $C \subseteq X$, $\{\varphi(x,b_\sigma) : \sigma \in C\}$ is $k$-inconsistent and
  \item for any finite right-$n$-comb $C \subseteq X$, $\{\varphi(x,b_\sigma) : \sigma \in C\}$ is consistent.
  \end{itemize}
  \emph{A partial strong $(k,m,n)$-weaves for $\varphi(x,y)$ of depth $L$ (on $X$)} is a partial $(k,m,n)$-weave for $\varphi(x,y)$ of depth $L$ on $X$ satisfying the additional condition (strengthening the second bullet point above) that for any finite wide right-$n$-comb $C \subseteq X$, $\{\varphi(x,b_\sigma) : \sigma \in C\}$ is consistent.

  A partial (strong) $(k,m,n)$-weave $(b_\sigma : \sigma \in X)$ for $\varphi(x,y)$ of depth $L$ is a \emph{(strong) $(k,m,n)$-weave for $\varphi(x,y)$ of depth $L$} if $X = (2^2)^L$.

  A \emph{(partial, strong) $(k,m,n)$-weave of depth $L$} is a (partial, strong) $(k,m,n)$-weave for $\varphi(x,y)$ of depth $L$ for some formula $\varphi(x,y)$.
\end{defn}

It might make sense to refer to up-$\omega$-combs as `vertical antichains' and right-$\omega$-combs as `horizontal antichains,' as the up/down and left/right orientation ceases to be meaningful in that case, but for the sake of attempting to minimize terminology we will not do this.

One thing to note is that there is an important asymmetry between $m$ and $n$ in \cref{defn:weave}.
Specifically, if $(b_\sigma : \sigma \in X)$ is a partial $(k,m,n)$-weave and $m+1 \geq k$, then it is also a partial $(k,\omega,n)$-weave, since any up-$m$-comb of size at most $m+1$ is also an up-$m'$-comb for any $m' \geq m$.

The particular combinatorial consistency-inconsistency configuration we will be considering is that of (strong) $(k,m,n)$-weaves of depth $\omega$. There are two reasons we have bothered with the extra complexity of defining both weaves and strong weaves. The first is that it makes the connection between $(2,1,\omega)$-weaves and cographs discussed in \cref{sec:cographs} cleaner. The second  is that strong weaves are what naturally arise in the proof of \cref{thm:failure-of-Kim-bi-invariant-to-weaves} but (non-strong) weaves are all that we need in the proof of \cref{prop:get-heir-coheir-heir-coheir-from-weave} (see Figure~\ref{fig:U-and-R}). This gives indirectly that a theory $T$ has a $(k,1,1)$-weave of depth $\omega$ if and only if it has a strong $(k,1,1)$-weave of depth $\omega$, but it is worth establishing that this holds for $(k,m,n)$-weaves in general in an attempt to keep the zoo of combinatorial consistency-inconsistency configurations as small as possible.

We will write $f``[X]$ for the image of the set $X$ under the function $f$. %

\begin{figure}
  \centering
  \begin{tikzpicture}
    \begin{scope}[scale=0.75,shift={(-10,0)}]
      \draw[line width=3pt] (0,0) -- (0,8) -- (8,8) -- (8,0) -- cycle;
      \draw[line width=3pt] (0,4) -- (8,4);
      \draw[line width=3pt] (4,0) -- (4,8);
      \foreach \i in {0,1} {
        \foreach \j in {0,1} {
          \begin{scope}[shift={(4*\i,4*\j)}]
            \draw[line width=1.4pt] (0,2) -- (4,2);
            \draw[line width=1.4pt] (2,0) -- (2,4);
            \foreach \ii in {0,1} {
              \foreach \jj in {0,1} {
                \begin{scope}[shift={(2*\ii,2*\jj)}]
                  \draw[line width=0.5pt] (0,1) -- (2,1);
                  \draw[line width=0.5pt] (1,0) -- (1,2);
                \end{scope}
              }
            }
          \end{scope}
        }
      }
      \coordinate (A0000) at (1.1,-0.1);
      \coordinate (A0100) at (3.1,-0.1);
      \coordinate (A1000) at (5.1,-0.1);
      \coordinate (A1100) at (7.1,-0.1);
      \coordinate (B0011) at (1.1,8.1);
      \coordinate (B0111) at (3.1,8.1);
      \coordinate (B1011) at (5.1,8.1);
      \coordinate (B1111) at (7.1,8.1);
    \end{scope}

    \begin{scope}[scale=0.75]
      \draw[dotted,line width=1pt,opacity=0.6] (0,0) -- (0,8) -- (8,8) -- (8,0) -- cycle;
      \draw[dotted,line width=1pt,opacity=0.6] (0,4) -- (8,4);
      \draw[dotted,line width=1pt,opacity=0.6] (4,0) -- (4,8);
      \foreach \i in {0,1} {
        \foreach \j in {0,1} {
          \begin{scope}[shift={(4*\i,4*\j)}]
            \draw[dotted,line width=0.7pt,opacity=0.75] (0,2) -- (4,2);
            \draw[dotted,line width=0.7pt,opacity=0.75] (2,0) -- (2,4);
            \foreach \ii in {0,1} {
              \foreach \jj in {0,1} {
                \begin{scope}[shift={(2*\ii,2*\jj)}]
                  \draw[dotted,line width=0.5pt,opacity=0.75] (0,1) -- (2,1);
                  \draw[dotted,line width=0.5pt,opacity=0.75] (1,0) -- (1,2);
                  \foreach \iii in {0,1} {
                    \foreach \jjj in {0,1} {
                      \begin{scope}[shift={(\iii,\jjj)}]
                        \draw[dotted,line width=0.25pt,opacity=0.75] (0,0.5) -- (1,0.5);
                        \draw[dotted,line width=0.25pt,opacity=0.75] (0.5,0) -- (0.5,1);
                      \end{scope}
                    }
                  }
                \end{scope}
              }
            }
          \end{scope}
        }
      }
      \foreach \i in {0,1} {
        \foreach \j in {0,1} {
          \begin{scope}[shift={(6*\i,2*\j)}]
            \foreach \ii in {0,1} {
              \foreach \jj in {0,1} {
                \begin{scope}[shift={(1.5*\ii,0.5*\jj)}]
                  \coordinate (C\i\ii\j\jj) at (1/4-0.1,-0.1);
                  \coordinate (D\i\ii\j\jj) at (1/4-0.1,1/4+0.1);
                  \foreach \iii in {0,1} {
                    \foreach \jjj in {0,1} {
                      \begin{scope}[shift={(1.5/4*\iii,0.5/4*\jjj)}]
                        \draw[line width=0.7pt] (0,0) -- (0.125,0) -- (0.125,0.125) -- (0,0.125) -- cycle;
                      \end{scope}
                    }
                  }
                \end{scope}
              }
            }
          \end{scope}
        }
      }
    \end{scope}
    \draw[->] (B0011) to [out=25,in=125] (D0011);
    \draw[->] (B0111) to [out=25,in=125] (D0111);
    \draw[->] (B1011) to [out=25,in=125] (D1011);
    \draw[->] (B1111) to [out=25,in=125] (D1111);
    \draw[->] (A0000) to [out=-25,in=-135] (C0000);
    \draw[->] (A0100) to [out=-25,in=-135] (C0100);
    \draw[->] (A1000) to [out=-25,in=-135] (C1000);
    \draw[->] (A1100) to [out=-25,in=-135] (C1100);
  \end{tikzpicture}
  \vspace{-2.5em}
  \caption{The function $f$ in the proof of \cref{prop:weave-iff-strong-weave}. The image of any wide right-$n$-comb under $f$ is a right-$n$-comb, and the image of any up-$m$-comb is an up-$m$-comb.}
  \label{fig:blah}
\end{figure}
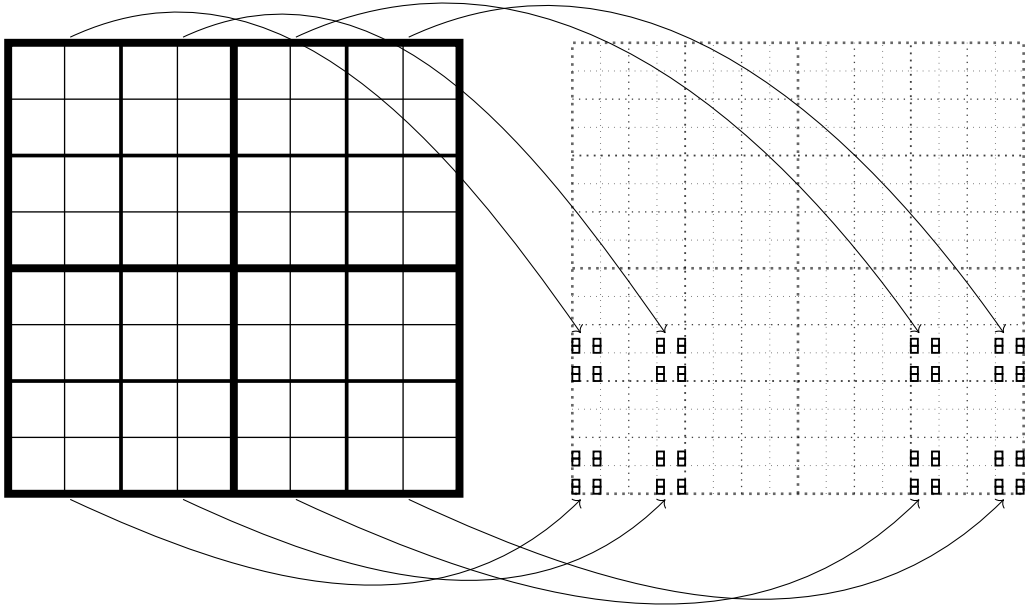

\begin{prop}\label{prop:weave-iff-strong-weave}
  A theory $T$ has a $(k,m,n)$-weave for $\varphi(x,y)$ of depth $\omega$ if and only if it has a strong $(k,m,n)$-weave for $\varphi(x,y)$ of depth $\omega$.
\end{prop}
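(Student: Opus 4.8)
The plan is to first dispose of the trivial direction and then reduce the substantive direction to a single combinatorial lemma about reindexing. The forward implication is immediate, since a strong $(k,m,n)$-weave for $\varphi(x,y)$ of depth $\omega$ is by definition also a $(k,m,n)$-weave for $\varphi(x,y)$ of depth $\omega$. For the converse, I would fix a $(k,m,n)$-weave $(b_\sigma : \sigma \in (2^2)^\omega)$ for $\varphi$ of depth $\omega$ and reduce everything to producing an injective function $f \colon (2^2)^\omega \to (2^2)^\omega$ such that $f``[C]$ is a finite up-$m$-comb whenever $C$ is one and $f``[C]$ is a finite right-$n$-comb whenever $C$ is a finite right-$n$-comb or a finite wide right-$n$-comb. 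Given such an $f$, I would set $b'_\sigma := b_{f(\sigma)}$; then for every $C$ the set of formulas $\{\varphi(x,b'_\sigma) : \sigma \in C\}$ coincides with $\{\varphi(x,b_\tau) : \tau \in f``[C]\}$ on the nose, so up-$m$-combs stay $k$-inconsistent and (wide) right-$n$-combs stay consistent under the old weave, witnessing that $(b'_\sigma : \sigma \in (2^2)^\omega)$ is a strong $(k,m,n)$-weave for $\varphi$ of depth $\omega$.

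To construct $f$, I would split each level of the codomain into an ``even'' slot and an ``odd'' slot, recording the horizontal datum of $\sigma(i)$ in the even slot and the vertical datum in the odd slot while zeroing out the complementary coordinate in each: for $\sigma \in (2^2)^\omega$ with $\sigma(i) = (\sigma(i)_0,\sigma(i)_1)$, put $f(\sigma)(2i) := (\sigma(i)_0, 0)$ and $f(\sigma)(2i+1) := (0, \sigma(i)_1)$. (This is the map depicted in Figure~\ref{fig:blah}.) Since $\sigma$ is recoverable from $f(\sigma)$, this $f$ is injective. The key observation to verify is that $f$ decouples horizontal splits from vertical splits: if $A$ is narrowly below $B$ with common prefix $\tau$ and horizontal bit $i$, then every element of $f``[A]$ and of $f``[B]$ extends $f(\tau)\concat(i,0)$ and they are then separated by the vertical bit in the following odd slot, so $f``[A]$ is narrowly below $f``[B]$; and if $A$ is merely \emph{widely} to the left of $B$ with prefix $\sigma$, then because the vertical coordinate has been zeroed in the even slot where the horizontal split occurs, $f``[A]$ and $f``[B]$ turn out to be \emph{narrowly} to the left of each other, cleanly separated at that even slot (the same computation also handles an ordinary narrow left step). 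A straightforward induction on the inductive definitions of the three comb classes, using $|f``[A]| \le |A|$ at each amalgamation step, then lifts these one-step statements to the two global properties of $f$; the wide-right-$n$-comb case invokes that such a comb is one wide left step placed above two ordinary right-$n$-combs, whose images are themselves ordinary right-$n$-combs by the preceding case.

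The one place I expect to need genuine care is the design of $f$ itself: it must simultaneously convert every wide left step into a narrow one (so that wide right-$n$-combs behave like right-$n$-combs) and preserve the narrow-below relation generating up-$m$-combs (so that $k$-inconsistency is retained), and it is not obvious a priori that a single reindexing can accomplish both. The ``doubled index with the complementary coordinate zeroed'' recipe above is precisely what reconciles the two requirements; once it is in hand, the rest is routine checking against \cref{defn:narrow-wide-comb,defn:weave}.
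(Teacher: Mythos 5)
Your proof is correct and follows essentially the same doubling-the-index construction as the paper; your choice of $f$ (zeroing the horizontal coordinate in the odd slot rather than storing the whole pair there, as the paper does) is a cosmetic variant that works just as well. One small point in your favor: you state the one-step implication in the direction actually needed (widely-left maps to narrowly-left, so images of wide right-$n$-combs are right-$n$-combs), which matches the paper's Figure~\ref{fig:blah} caption, whereas the paper's prose appears to state the bullet points and their consequence with the narrow/wide roles inadvertently reversed.
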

\begin{proof}
  Assume that $T$ has a $(k,m,n)$-weave $(b_\sigma : \sigma \in (2^2)^\omega)$ for $\varphi(x,y)$. Let $f : (2^2)^\omega \to (2^2)^\omega$ be defined by $f(\alpha)(2n) = (\fst (\alpha(n)),0)$ (where $\fst((i,j)) = i$) and $f(\alpha)(2n+1) = \alpha(n)$. Note that for any $A,B \subseteq (2^2)^\omega$, 
  \begin{itemize}
  \item if $A$ is narrowly below $B$, then $f``[A]$ is narrowly below $f``[B]$ and
  \item if $A$ is narrowly to the left of $B$, then $f``[A]$ is widely to the left of $f``[B]$. 
  \end{itemize}
  It follows from this that the image of any up-$m$-comb under $f$ is an up-$m$-comb and that the image of any right-$n$-comb under $f$ is a wide right-$n$-comb, whereby $(b_{f(\sigma)} : \sigma \in (2^2)^\omega)$ is a strong $(k,m,n)$-weave for $\varphi(x,y)$ of depth $\omega$.

  The other direction follows immediately from the fact that any strong $(k,m,n)$-weave is also a $(k,m,n)$-weave.
\end{proof}

Given \cref{prop:weave-iff-strong-weave}, we will primarily phrase things in terms of weaves, rather than strong weaves (including \cref{thm:failure-of-Kim-bi-invariant-to-weaves}). %

The following definition is a bit more complicated than it needs to be for the purposes of this section (which is showing the relatively routine fact that a theory $T$ has a strong $(k,m,n)$-weave for a given formula $\varphi(x,y)$ of depth $\omega$ if it has strong $(k,m,n)$-weaves for $\varphi(x,y)$ of depth $d$ for every finite $d$), but we will be using this extra machinery later in \cref{sec:converse}.

\begin{defn}
  Given a model $M$ and a $(k,m,n)$-weave $(b_\sigma : \sigma \in X)$ for $\varphi(x,y)$ of depth $L$, the \emph{weave structure associated to $M$ and $(b_\sigma : \sigma \in X)$} is the three-sorted structure $(M,(2^2)^{\leq L},L,B,\prec,<,\eval,|\cdot |)$ where $B : (2^2)^L \to M_n$ is a function\footnote{Strictly speaking this is a partial function whose domain is the definable subset of $(2^2)^{\leq L}$ of elements of maximal height.} satisfying that $B(\sigma) = b^n_\sigma$ for all $\sigma \in (2^2)^L$, $\prec$ is the extension relation on $(2^2)^{\leq L}$, $<$ is the order on $L$, $\eval : (2^2)^{\leq L} \times L \to 2^2$ is the partial evaluation function $\eval(\sigma,i) = \sigma(i)$, and $|\cdot | : (2^2)^{<L} \to L$ is the (partial) height function $|\sigma| = \top(\dom(\sigma))$.

  A \emph{weave structure} is a weave structure associated to some model $M$ and some family $(b_\sigma : \sigma \in X)$.

\end{defn}

The function $\eval$ could be regarded as a literal function to a fourth sort $2^2$ (with constants naming the four elements of $2^2$) or as a pair of predicates giving the value of the first and second coordinates of $\eval(\sigma,i)$.

\begin{defn}\label{defn:weave-model}
  A \emph{$(k,m,n)$-weave model for $\varphi(x,y)$} is three-sorted structure $(M,W,L,B,\prec,<,\eval,|\cdot|)$ that is a model of the common first-order theory of weave structures associated to models of $T$ with $(k,m,n)$-weaves for $\varphi(x,y)$.

  An \emph{unbounded $(k,m,n)$-weave model for $\varphi(x,y)$} is a $(k,m,n)$-weave model $(M,W,L,B,\prec,<,\eval,|\cdot|)$ for $\varphi(x,y)$ such that $L$ has no maximal element.

  Given a weave model $(M,W,L,B,\prec,<,\eval,|\cdot|)$, we'll write $W_{<L}$ for the elements of $W$ in the domain of $|\cdot|$ and $W_{\top}$ for the elements of $W$ not in the domain of $|\cdot|$ (i.e., those $\sigma$ for which $\eval(\sigma,i)$ is defined for all $i \in L$).

\end{defn}

\begin{lem}\label{lem:weave-models-wrok}
  Fix $k < \omega$, $m,n \leq \omega$, and $\varphi(x,y)$. If $(M,W,L,B,\prec,<,\eval,|\cdot |)$ is a $(k,m,n)$-weave model for $\varphi(x,y)$, then there is a canonical identification of $W$ with a subset of $(2^2)^L$ such that $(B(\sigma) : \sigma \in W)$ is a partial $(k,m,n)$-weave for $\varphi(x,y)$.
\end{lem}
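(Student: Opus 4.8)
The plan is to obtain the identification by ``evaluating'' the elements of $W$ and then transfer the two comb conditions purely syntactically, using that for each fixed size $\ell$ both ``$k$-inconsistency of $\ell$ many $\varphi$-instances'' and ``these $\ell$ points form a finite up-$m$-comb (resp.\ right-$n$-comb) of a prescribed shape'' are expressible by a single first-order formula in the language of weave structures. So the three steps are: build and check the identification $\iota$; record the relevant first-order sentences and note they belong to the common theory; and read off the weave conditions.

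For $\sigma \in W$, let $\iota(\sigma)$ be the function $i \mapsto \eval(\sigma,i)$ whose domain is the set of $i \in L$ for which $\eval(\sigma,i)$ is defined. Each of the following is expressible by a first-order sentence in $(W,L,\prec,<,\eval,|\cdot|)$ and holds in every weave structure associated to a model of $T$, hence belongs to the common theory and holds in our weave model: the domain of $\eval(\sigma,\cdot)$ is always an initial segment of $L$; if $|\sigma|$ is defined then that domain is $\{i : i < |\sigma|\}$, and otherwise $\eval(\sigma,i)$ is defined for all $i$; the partial function $\eval(\sigma,\cdot)$ determines $\sigma$ (extensionality); and $\sigma \prec \tau$ holds exactly when $\eval(\tau,\cdot)$ properly extends $\eval(\sigma,\cdot)$. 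It follows that $\iota$ is a well-defined injection of $W$ into $(2^2)^{\leq L}$ which is the identity on the $L$-sort, carries $W_\top$ into $(2^2)^L$ and $W_{<L}$ into $(2^2)^{<L}$, and respects $\prec$, $\eval$ and $|\cdot|$. Put $X := \iota``[W_\top] \subseteq (2^2)^L$; the content that remains is that $(B(\sigma) : \sigma \in W_\top)$, viewed via $\iota$ as a family indexed by $X$, is a partial $(k,m,n)$-weave on $X$. (Strictly, the ``$(2^2)^L$'' of the statement should read ``$(2^2)^{\leq L}$'', with $B$ defined only on the part of $W$ lying in $(2^2)^L$.)

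Now fix $\ell < \omega$. There are only finitely many ``shapes'' of a finite up-$m$-comb on $\ell$ labelled points: a shape is a parse of the inductive rule of \cref{defn:narrow-wide-comb}, equivalently a rooted binary tree with leaf set $\{1,\dots,\ell\}$ in which each internal node carries a first coordinate $i < 2$ and has at most $m$ of the leaves in its left subtree. A tuple $(\sigma_1,\dots,\sigma_\ell)$ of distinct elements of $W_\top$ has $\{\iota(\sigma_1),\dots,\iota(\sigma_\ell)\}$ a finite up-$m$-comb of shape $P$ precisely when there exist branch levels in $L$, one per internal node of $P$ and suitably $<$-ordered, such that at each node the $\eval$-values of the corresponding leaf-blocks agree below that level and split at it in the prescribed $(i,0)$-versus-$(i,1)$ pattern; since $\iota$ preserves $\eval$ and fixes $L$, this is literally a first-order formula $\Theta_P(\sigma_1,\dots,\sigma_\ell)$. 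The essential point here is that the branch levels must be quantified over $L$, not captured as a common initial segment inside $W$: the partial functions witnessing ``narrowly below'' need not belong to $W$ in an abstract weave model. In a genuine weave structure, however, $W = (2^2)^{\leq L}$ does contain all those restrictions, so there $\Theta_P$ holds exactly when the $\iota(\sigma_t)$ form an honest up-$m$-comb of shape $P$, which is then $k$-inconsistent. Consequently, for each $P$, the sentence
\[
\forall \sigma_1,\dots,\sigma_\ell \in W_\top \;\big[\, \Theta_P(\sigma_1,\dots,\sigma_\ell) \;\to\; \{\varphi(x,B(\sigma_t)) : t \leq \ell\}\text{ is }k\text{-inconsistent} \,\big]
\]
(the conclusion being the first-order statement $\bigwedge_{|I|=k}\lnot\exists x\bigwedge_{t\in I}\varphi(x,B(\sigma_t))$ about the $M$-sort) holds in every weave structure associated to a $(k,m,n)$-weave for $\varphi$, so it lies in the common theory and holds in our weave model; the exactly parallel sentences for finite right-$n$-comb shapes, with ``$k$-inconsistent'' replaced by the consistency statement $\exists x\bigwedge_{t\leq\ell}\varphi(x,B(\sigma_t))$, hold as well.

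Finally, given a finite up-$m$-comb $C \subseteq X$ with $|C| = \ell$, let $\bar\sigma$ enumerate $\iota^{-1}``[C]$. Since $C$ is a finite up-$m$-comb, the common initial segments witnessing the ``narrowly below'' relations in a parse of $C$ exist as topped initial segments of $L$, and their tops supply branch levels in $L$; hence $C$ realizes some shape $P$, $\Theta_P(\bar\sigma)$ holds in the weave model, and the displayed sentence for $P$ gives that $\{\varphi(x,B(\sigma)) : \sigma \in \bar\sigma\}$ is $k$-inconsistent. The same argument with right-$n$-combs yields consistency of $\{\varphi(x,B(\sigma)) : \sigma \in \iota^{-1}``[C]\}$ whenever $C \subseteq X$ is a finite right-$n$-comb. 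Thus $(B(\sigma) : \sigma \in W_\top)$, indexed by $X$ through $\iota$, is a partial $(k,m,n)$-weave for $\varphi(x,y)$, proving the lemma. The main obstacle, and really the only content, is the first-order expressibility of the comb shapes described above --- in particular the observation that the witnessing initial segments must be handled through their levels in the $L$-sort rather than as elements of $W$.
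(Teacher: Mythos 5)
Your proof is correct and takes essentially the same approach as the paper's: define $\iota$ by evaluation, then observe that the consistency/inconsistency conditions defining a partial $(k,m,n)$-weave are first-order expressible over the common theory of weave structures; your shapes-and-branch-levels argument supplies exactly the details the paper compresses into ``it is not difficult to show.'' One small overclaim: your remark that the partial functions witnessing ``narrowly below'' need not belong to $W$ in an abstract weave model is not quite right --- closure of $W$ under restriction to topped initial segments is itself a first-order consequence of the weave-structure theory (for any $\sigma$ and any level $i$ below $|\sigma|$, the unique $\tau$ with $\tau \prec \sigma$ and $|\tau| = i$ exists, and this is expressible in the language), so those witnesses do lie in $W$. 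Quantifying over levels in $L$ is nonetheless a valid and arguably cleaner way to state $\Theta_P$, so this does not affect the soundness of your argument, and your reading of ``subset of $(2^2)^L$'' as really concerning $W_\top$ inside $(2^2)^{\leq L}$ is a fair observation about an abuse of notation in the statement.
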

\begin{proof}
  Define $\iota : W \to (2^2)^L$ by $\iota(a) = (i \mapsto \eval(a,i))$. The first-order theory of weave structures ensures that this is an injection, so we will identify $W$ with its image under $\iota$.

  It is not difficult to show that for any $n$ and $m$, the set of $m$-tuples in $W$ that enumerate right-$n$-combs (resp.~up-$n$-combs) is first-order definable and therefore that the consistency and inconsistency conditions in the definition of partial $(k,m,n)$-weaves for $\varphi(x,y)$ is axiomatizable in first-order logic.
\end{proof}

\begin{lem}\label{lem:foo-truncate}
  Fix an ordinal-like linear order $L$, ordinal-like initial segment\footnote{Note that $L_0$ is not required to be a topped initial segment of $L$.} $L_0 \subseteq L$, and function $f : (2^2)^{L_0} \to (2^2)^L$ satisfying that for each $\sigma \in (2^2)^{L_0}$, $\sigma \subseteq f(\sigma)$ (i.e., $f(\sigma)$ extends $\sigma$ as a function).

  For any $A \subseteq (2^2)^{L_0}$ and $n \leq \omega$, if $A \subseteq (2^2)^{L_0}$ is an up-$n$-comb (resp.\ right-$n$-comb), then $f``[A] \subseteq (2^2)^L$ is an up-$n$-comb (resp.\ right-$n$-comb).
\end{lem}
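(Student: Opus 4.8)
The plan is to prove the following sharper finite statement by induction on the inductive construction of finite combs: if $A_0 \subseteq (2^2)^{L_0}$ is a finite up-$n$-comb (resp.\ finite right-$n$-comb), then $f``[A_0] \subseteq (2^2)^L$ is a finite up-$n$-comb (resp.\ finite right-$n$-comb). The lemma then follows from the finite-approximation characterization of up-/right-$n$-combs: any finite $B_0 \subseteq f``[A]$ equals $f``[A_0]$ for some finite $A_0 \subseteq A$ (choose one $f$-preimage of each element of $B_0$), and such an $A_0$ is a finite up-$n$-comb (resp.\ right-$n$-comb) because $A$ is.

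The heart of the argument is the observation that the narrowness relations are preserved by $f$: if $A$ is narrowly below (resp.\ narrowly to the left of) $B$ in $(2^2)^{L_0}$, then $f``[A]$ is narrowly below (resp.\ narrowly to the left of) $f``[B]$ in $(2^2)^L$, witnessed by the same $\tau$ and $i$ (resp.\ $j$). To see this I first note that a topped initial segment of $L_0$ is a topped initial segment of $L$ with the same top: since $L_0$ is an initial segment of $L$, the least element of $L_0 \setminus \dom(\tau)$ is still the least element of $L \setminus \dom(\tau)$. Consequently $\tau \in (2^2)^{<L}$ and the functions $\tau \concat (i,0)$, $\tau \concat (i,1)$ (and $\tau \concat (0,j)$, $\tau \concat (1,j)$) are the same whether formed inside $(2^2)^{<L_0}$ or $(2^2)^{<L}$. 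Now if $\sigma \in A$ extends $\tau \concat(i,0)$, then $f(\sigma) \supseteq \sigma \supseteq \tau \concat(i,0)$, so $f(\sigma)$ extends $\tau \concat(i,0)$; likewise every element of $f``[B]$ extends $\tau \concat(i,1)$, giving the claim (and the right-comb case is verbatim identical with the coordinates swapped).

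Granting this, the induction is routine. A singleton $\{\sigma\}$ has image the singleton $\{f(\sigma)\}$. In the inductive step $A_0 = A \cup B$ with $A,B$ finite up-$n$-combs, $|A| \le n$, and $A$ narrowly below $B$: by the inductive hypothesis $f``[A]$ and $f``[B]$ are finite up-$n$-combs, and $|f``[A]| \le |A| \le n$ since $f$ can only decrease cardinality, and $f``[A]$ is narrowly below $f``[B]$ by the previous paragraph, so $f``[A_0] = f``[A] \cup f``[B]$ is a finite up-$n$-comb. The right-$n$-comb case is the same with ``narrowly below'' replaced by ``narrowly to the left of.'' The only real obstacle is the bookkeeping in the previous paragraph — verifying that $\tau$, the operation $\concat$, and the ``extends'' relation all behave stably under passing from $L_0$ to the larger order $L$ — and once that is pinned down there is nothing further to check.
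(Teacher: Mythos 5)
Your proof is correct and takes essentially the same approach as the paper's: reduce to finite combs, then do structural induction on the inductive construction using the key fact that the narrowness relations are preserved by $f$ (with the same witness). You are slightly more explicit than the paper about the bookkeeping point that a topped initial segment of $L_0$ is also a topped initial segment of $L$ with the same top, so that $\concat$ and the extension relation behave stably — a subtlety the paper glosses over but which is indeed what makes the witness carry over verbatim.
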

\begin{proof}
  It is sufficient to check this for finite $A$. The arguments for right-$n$-combs and up-$n$-combs are essentially the same, so we will just give the argument in the right-$n$-comb case. 

  Suppose that $A,B \subseteq (2^2)^{L_0}$ are finite right-$n$-combs with $|A| \leq n$ such that $A$ is narrowly to the left of $B$. Fix $\sigma \in (2^2)^{L_0}$ witnessing this. Suppose that we already know that $f``[A]$ and $f``[B]$ are right-$n$-combs in $(2^2)^{L}$. Then we have that for some $j < 2$ every element of $f``[A]$ extends $\sigma\concat (0,j)$ and every element of $f``[B]$ extends $\sigma \concat (1,j)$. Therefore $f``[A]$ is narrowly to the left of $f``[B]$ and we have that $f``[A\cup B] = f``[A] \cup f``[B]$ is a right-$n$-comb in $(2^2)^L$.
\end{proof}

\begin{lem}\label{lem:weave-truncation}
  For any natural $k < \omega$, naturals $m,n \leq \omega$, formula $\varphi(x,y)$, ordinal-like $L$, subset $X \subseteq (2^2)^L$, ordinal-like initial segment $L_0 \subseteq L$, subset $X_0 \subseteq (2^2)^{L_0}$, function $f : X_0 \to X$ satisfying that for each $\sigma \in X_0$,  $\sigma \subseteq f(\sigma)$ (i.e., $f(\sigma)$ extends $\sigma$ as a function), and partial $(k,m,n)$-weave $(b_\sigma : \sigma \in X)$ for $\varphi(x,y)$ of depth $L$, we have that $(b_{f(\tau)} : \tau \in X_0)$ is a partial $(k,m,n)$-weave for $\varphi(x,y)$ of depth $L$.
\end{lem}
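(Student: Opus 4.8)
The plan is to verify directly the two defining clauses of a partial $(k,m,n)$-weave for the family $(b_{f(\tau)} : \tau \in X_0)$ (which is indexed by a subset of $(2^2)^{L_0}$, hence is to be read as being of depth $L_0$ on $X_0$), deriving each clause from the corresponding clause for the given weave $(b_\sigma : \sigma \in X)$ by transporting combs along $f$ via \cref{lem:foo-truncate}. Concretely, I must show that $\{\varphi(x,b_{f(\tau)}) : \tau \in C\}$ is $k$-inconsistent for every finite up-$m$-comb $C \subseteq X_0$ and consistent for every finite right-$n$-comb $C \subseteq X_0$.

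First I would extend $f$ to a function $\hat f : (2^2)^{L_0} \to (2^2)^L$ still satisfying $\sigma \subseteq \hat f(\sigma)$ for all $\sigma$ — for instance, on $\sigma \notin X_0$ let $\hat f(\sigma)$ agree with $\sigma$ on $L_0$ and take the constant value $(0,0)$ on $L \setminus L_0$ — so that \cref{lem:foo-truncate} applies verbatim to $\hat f$: the $\hat f$-image of any up-$m$-comb (resp.\ right-$n$-comb) of $(2^2)^{L_0}$ is an up-$m$-comb (resp.\ right-$n$-comb) of $(2^2)^L$. Then, given a finite up-$m$-comb $C \subseteq X_0$, the set $f``[C] = \hat f``[C]$ is a finite up-$m$-comb contained in $X$, so $\{\varphi(x,b_\sigma) : \sigma \in f``[C]\}$ is $k$-inconsistent by the weave hypothesis on $(b_\sigma : \sigma \in X)$; since $\{\varphi(x,b_\sigma) : \sigma \in f``[C]\} = \{\varphi(x,b_{f(\tau)}) : \tau \in C\}$ as sets of formulas, the latter is $k$-inconsistent, which is the first clause. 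The argument for a finite right-$n$-comb $C \subseteq X_0$ is identical, with ``consistent'' in place of ``$k$-inconsistent'', and together these give the conclusion.

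I do not expect a genuine obstacle here: the lemma is essentially bookkeeping on top of \cref{lem:foo-truncate}. The only points worth a word of justification are the harmless extension of $f$ to all of $(2^2)^{L_0}$ so that \cref{lem:foo-truncate} can be quoted as stated, and the remark that $k$-(in)consistency is a property of the underlying set of formulas, so relabeling $\{\varphi(x,b_\sigma):\sigma\in f``[C]\}$ as $\{\varphi(x,b_{f(\tau)}):\tau\in C\}$ changes nothing (in fact $f$ is injective, since $\sigma = f(\sigma)\res L_0$, but this is not needed).
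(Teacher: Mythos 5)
Your proof is correct and takes essentially the same approach as the paper, which simply records this lemma as ``immediate from \cref{lem:foo-truncate}.'' The only thing you add is the (harmless and correct) observation that $f$ must be extended to all of $(2^2)^{L_0}$, or equivalently that \cref{lem:foo-truncate} only uses the values of $f$ on the comb in question, together with the clause-by-clause bookkeeping that the paper leaves implicit; you also correctly read the paper's ``of depth $L$'' in the conclusion as the intended ``of depth $L_0$.''
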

\begin{proof}
  This is immediate from \cref{lem:foo-truncate}.
\end{proof}

\begin{prop}\label{prop:finite-depth-to-depth-omega}
  For any $k < \omega$, $m,n,\leq \omega$, and formula $\varphi(x,y)$, if $T$ has a $(k,m,n)$-weave of depth $d$ for $\varphi(x,y)$ for every $d<\omega$, then $T$ has a $(k,m,n)$-weave for $\varphi(x,y)$ of depth $\omega$.
\end{prop}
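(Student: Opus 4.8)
The plan is to take an ultraproduct of the finite-depth weave structures and then extract a depth-$\omega$ weave from it using \cref{lem:weave-models-wrok} and \cref{lem:weave-truncation}, which together reduce the task to locating, inside the ultraproduct, an ordinal-like initial segment isomorphic to $\omega$ together with a map into the ultraproduct's index set of weave nodes that ``extends'' each element of $(2^2)^{\omega}$.

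First I would fix, for each $d$ with $1 \le d < \omega$, a model $\mathcal{M}_d \models T$ carrying a $(k,m,n)$-weave $(b^d_\sigma : \sigma \in (2^2)^d)$ for $\varphi(x,y)$ of depth $d$, and let $\mathcal{W}_d = (\mathcal{M}_d,(2^2)^{\le d},d,B_d,\prec_d,<_d,\eval_d,|\cdot|_d)$ be the associated weave structure. Fixing a nonprincipal ultrafilter $\mathcal{U}$ on $I := \{d : 1 \le d < \omega\}$, I put $\mathcal{W}^{\ast} := \prod_{d \in I}\mathcal{W}_d / \mathcal{U} = (M^{\ast},W^{\ast},L^{\ast},B^{\ast},\prec,<,\eval,|\cdot|)$. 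Since each $\mathcal{W}_d$ is a weave structure associated to a model of $T$ with a $(k,m,n)$-weave for $\varphi$, {\L}o\'{s}'s theorem shows that $\mathcal{W}^{\ast}$ models the common first-order theory of such structures, i.e.\ $\mathcal{W}^{\ast}$ is a $(k,m,n)$-weave model for $\varphi$. By \cref{lem:weave-models-wrok} I may identify $W^{\ast}$ with a subset of $(2^2)^{L^{\ast}}$ via $\iota(a) = (\ell \mapsto \eval(a,\ell))$ in such a way that $(B^{\ast}(\sigma) : \sigma \in W^{\ast})$ is a partial $(k,m,n)$-weave for $\varphi$ of depth $L^{\ast}$.

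Next I would isolate the copy of $\omega$ inside $L^{\ast}$. For $j < \omega$ let $[\hat{\jmath}] \in L^{\ast}$ be the class of the $I$-tuple equal to $j$ on $\{d : d > j\}$ and to $0$ elsewhere; since $\mathcal{U}$ is nonprincipal, $j \mapsto [\hat{\jmath}]$ is an order-embedding of $\omega$ into $L^{\ast}$, and since any element of $L^{\ast}$ lying below $[\hat{\jmath}]$ is, $\mathcal{U}$-almost everywhere, one of the finitely many values $0,\dots,j-1$, the set $L_0 := \{[\hat{\jmath}] : j < \omega\}$ is an initial segment of $L^{\ast}$; being order-isomorphic to $\omega$ it is ordinal-like. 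I then define $f : (2^2)^{L_0} \to W^{\ast}$ by $f(\sigma) = [\,(\sigma \res \{0,\dots,d-1\})_{d \in I}\,]$, where $\sigma \res \{0,\dots,d-1\} \in (2^2)^d \subseteq (2^2)^{\le d}$ and I identify $\sigma \in (2^2)^{L_0}$ with the corresponding function $\omega \to 2^2$. Unwinding the definitions (and evaluating $\eval$ coordinatewise via {\L}o\'{s}'s theorem) one checks that $f(\sigma)$ is an element of maximal height and that $\eval(f(\sigma),[\hat{\jmath}]) = \sigma(j)$ for every $j$, so $\iota(f(\sigma)) \res L_0 = \sigma$; in other words $\sigma \subseteq f(\sigma)$ for every $\sigma \in (2^2)^{L_0}$.

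Finally, I would apply \cref{lem:weave-truncation} with the ordinal-like initial segment $L_0 \subseteq L^{\ast}$, with $X := W^{\ast}$, $X_0 := (2^2)^{L_0}$, the map $f$ just defined, and the partial $(k,m,n)$-weave $(B^{\ast}(\sigma) : \sigma \in W^{\ast})$ of depth $L^{\ast}$: this yields that $(B^{\ast}(f(\sigma)) : \sigma \in (2^2)^{L_0})$ is a partial $(k,m,n)$-weave for $\varphi$, and since it is indexed by all of $(2^2)^{L_0}$ and $L_0 \cong \omega$, the last clause of \cref{defn:weave} upgrades it to a genuine $(k,m,n)$-weave for $\varphi$ of depth $\omega$. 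I expect the only real work to be the verifications in the third paragraph --- that $L_0$ is genuinely an initial segment of $L^{\ast}$ and that $f$ takes values in $W^{\ast}$ with $\iota \circ f$ restricting to the identity on $(2^2)^{L_0}$ --- but both are routine facts about ultraproducts of finite structures; everything else is bookkeeping around \cref{lem:weave-models-wrok} and \cref{lem:weave-truncation}.
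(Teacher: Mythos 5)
Your proposal is correct and follows essentially the same route as the paper: take a nonprincipal ultraproduct of the associated weave structures, use \cref{lem:weave-models-wrok} to identify the second sort with a subset of $(2^2)^{L^\ast}$ carrying a partial weave, locate a copy of $\omega$ as an initial segment of $L^\ast$, pick a map $f$ with $\sigma \subseteq f(\sigma)$, and apply \cref{lem:weave-truncation}. The only real difference is local: the paper invokes $\aleph_0$-saturation of the ultraproduct to assert that each $\sigma \in (2^2)^\omega$ has an extension in $W^\ast$ and then chooses $f$ abstractly, whereas you construct $f$ explicitly as $f(\sigma) = [(\sigma\res\{0,\dots,d-1\})_{d\in I}]$ and verify the compatibility via \L o\'{s}. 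This is a cosmetic gain in concreteness (no appeal to saturation, an actual witness in hand) but does not change the structure of the argument.
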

\begin{proof}
  For each $d$, fix $M_d \models T$ and a $(k,m,n)$-weave $(b^n_\sigma : \sigma \in (2^2)^d)$ for $\varphi(x,y)$ with each $b^d_\sigma$ an element of $M_d$. For each $d$, let $N_d= (M_d,(2^2)^d,\{0,1,\dots,d-1\},P_{(2^2)^d},B_d,\prec_d,<_d,\eval_d,|\cdot |_d)$ be the weave structure associated to $M_d$ and $(b^d_\sigma : \sigma \in (2^2)^n)$. Let $N=(M,W,L,B,\prec,<,\eval,|\cdot |)$ be a non-principal ultraproduct of the $M_d$'s. By \cref{lem:weave-models-wrok}, we can identify $W$ with a subset of $(2^2)^{L}$ in a canonical way and moreover if we define $b_\sigma \coloneq B(\sigma)$ for $\sigma \in W$, we have that $(b_\sigma : \sigma \in W)$ is a partial $(k,m,n)$-weave for $\varphi(x,y)$. 

  By construction, $L$ has an initial segment isomorphic to $\omega$, which we will identify with $\omega$. By $\aleph_0$-saturation, we have that for each $\sigma \in (2^2)^{\omega}$, there is a $\tau \in W \subseteq (2^2)^L$ extending $\sigma$. Let $f : (2^2)^\omega \to W$ be a function satisfying that for each $\sigma \in (2^2)^{\omega}$, $\sigma \subseteq f(\sigma)$. By \cref{lem:weave-truncation}, we have that $(b_{f(\sigma)} : \sigma \in (2^2)^\omega)$ is a $(k,m,n)$-weave for $\varphi(x,y)$ of depth $\omega$.
\end{proof}

\section{Weaves from the failure of variants of Kim's lemma}
\label{sec:weaves-from-failures}

Given the large number of variants of Kim's lemma we will be considering, we need to introduce some systematic terminology for them.

\begin{defn}\label{defn:Kim's-lemmas}
  For any classes $\Xc$ and $\Yc$ of pairs $(A,p)$ with $A$ a small set of parameters and $p$ an $A$-invariant type and any $k < \omega$, we say that $T$ satisfies \emph{$(k,\Xc,\Yc)$-\KL{}} if for any set of parameters $A$ in a model of $T$, formula $\varphi(x,b)$, and $p(y),q(y) \supset \tp(b/M)$ with $(A,p) \in \Xc$ and $(A,q) \in \Yc$, if $\varphi(x,b)$ $k$-divides along $p$, then it divides along $q$.

  $T$ satisfies \emph{$(\omega,\Xc,\Yc)$-\KL{}} if it satisfies $(k,\Xc,\Yc)$-\KL{} for all $k < \omega$.

  For $k\leq \omega$ and class $\Zc$ of small sets of parameters (e.g., models, invariance bases), $T$ satisfies \emph{$(k,\Xc,\Yc)$-\KL{} over $\Zc$} if it satisfies $(k,\{ (A,p) \in \Xc:A \in \Zc\},\{(A,p) \in \Yc: A \in \Zc\})$-\KL{}.
\end{defn}

This notion has an easy monotonicity property, which is worth stating explicitly.

\begin{prop}\label{prop:Kim-monotone}
  If $\Xc \subseteq \Xc'$, $\Yc \subseteq \Yc'$, and $k \leq k' \leq \omega$, then $(k',\Xc',\Yc')$-\KL{} implies $(k,\Xc,\Yc)$-\KL{}.
\end{prop}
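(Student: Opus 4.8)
The plan is to reduce the statement to two elementary bookkeeping observations, applied in sequence: enlarging the parameter classes only weakens the hypothesis of the Kim's-lemma statement, and increasing $k$ only weakens it further. Throughout I would use that the conclusion ``$\varphi(x,b)$ divides along $q$'' is literally the same in all four variants, so the only thing that moves is the hypothesis ``$(A,p)\in\Xc$, $(A,q)\in\Yc$, and $\varphi(x,b)$ $k$-divides along $p$''.

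\textbf{Step 1 (class monotonicity).} First I would show that for any fixed $j \leq \omega$, $(j,\Xc',\Yc')$-\KL{} implies $(j,\Xc,\Yc)$-\KL{}. For $j<\omega$ this is immediate from the definitions: given $A$, $\varphi(x,b)$, and $p,q \supset \tp(b/A)$ with $(A,p)\in\Xc$, $(A,q)\in\Yc$, the inclusions $\Xc\subseteq\Xc'$ and $\Yc\subseteq\Yc'$ give $(A,p)\in\Xc'$ and $(A,q)\in\Yc'$, so if $\varphi(x,b)$ $j$-divides along $p$ then $(j,\Xc',\Yc')$-\KL{} already yields that it divides along $q$. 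For $j=\omega$ one just unwinds: $(\omega,\Xc',\Yc')$-\KL{} is by definition $(j',\Xc',\Yc')$-\KL{} for every $j'<\omega$, and applying the finite case to each $j'$ gives $(j',\Xc,\Yc)$-\KL{} for all $j'<\omega$, i.e.\ $(\omega,\Xc,\Yc)$-\KL{}.

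\textbf{Step 2 ($k$-monotonicity).} Next I would show that $(k',\Xc,\Yc)$-\KL{} implies $(k,\Xc,\Yc)$-\KL{} when $k\leq k'\leq\omega$. The one sub-observation needed is that a $k$-inconsistent set of formulas is $k'$-inconsistent whenever $k\leq k'$ (every $k'$-element subset contains a $k$-element subset, which is inconsistent); consequently, if $\varphi(x,b)$ $k$-divides along $p$ — witnessed by a Morley sequence $(b_i : i<\omega)$ generated by $p$ over $A$ with $\{\varphi(x,b_i):i<\omega\}$ $k$-inconsistent — then the same sequence witnesses that $\varphi(x,b)$ $k'$-divides along $p$. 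Now split on $k'$: if $k'<\omega$, then $(k',\Xc,\Yc)$-\KL{} applied to $A,\varphi(x,b),p,q$ gives that $\varphi(x,b)$ divides along $q$; if $k'=\omega$, then $(k',\Xc,\Yc)$-\KL{} unwinds to $(j,\Xc,\Yc)$-\KL{} for all $j<\omega$, and we use the instance $j=k$ when $k<\omega$, while when $k=\omega$ there is nothing to prove since $(k,\Xc,\Yc)$-\KL{} and $(k',\Xc,\Yc)$-\KL{} are the same statement. Chaining Step 1 with $j=k'$ and then Step 2 completes the proof: $(k',\Xc',\Yc')\text{-}\KL{} \Rightarrow (k',\Xc,\Yc)\text{-}\KL{} \Rightarrow (k,\Xc,\Yc)\text{-}\KL{}$.

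I do not expect a genuine obstacle here; this is pure unwinding of \cref{defn:Kim's-lemmas}. The only places that call for minimal care are the $\omega$-cases, where $(\omega,-,-)$-\KL{} must be treated as shorthand for its defining family of finite-$k$ statements rather than as a literal value $k=\omega$, and the elementary fact that $k$-inconsistency implies $k'$-inconsistency for $k\leq k'$, which is exactly what lets $k$-dividing pass up to $k'$-dividing.
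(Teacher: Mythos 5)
Your proof is correct and takes essentially the same approach as the paper: both rest on the two elementary observations that (i) $k$-inconsistency implies $k'$-inconsistency for $k\leq k'$, so $k$-dividing along a type entails $k'$-dividing along that type, and (ii) enlarging $\Xc$ and $\Yc$ only strengthens the available hypothesis. You merely factor the argument into two sequential steps and spell out the $\omega$ cases explicitly, whereas the paper does it in one pass (with some $p$/$q$ typos in its write-up) and leaves the $\omega$ case implicit via the definition of $(\omega,\Xc,\Yc)$-Kim's lemma as a conjunction over finite $k$.
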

\begin{proof}
  Suppose that $T$ satisfies $(k',\Xc',\Yc')$-\KL{}. Fix a set of parameters $A$, a formula $\varphi(x,b)$, and invariant types $p(y),q(y) \supset \tp(b/A)$. Suppose that $(A,p) \in \Xc$, $(A,q) \in \Yc$, and $\varphi(x,b)$ $k$-divides along $q$. Then it also $k'$-divides along $q$. Moreover, since $(A,q) \in \Xc$, it is in $\Xc'$ as well, so by assumption we have that $\varphi(x,b)$ divides along every $A$-invariant type $r$ with $(A,r) \in \Yc'$, and so in particular divides along $p$.
\end{proof}

Rather than introduce symbolic notation for various classes of invariant types, we will represent these classes with descriptive phrases. So, for example, the New Kim's Lemma of \cite{NKL} is `$(\omega$, invariant, Kim-strictly invariant$)$-\KL{} over models' in our nomenclature. %

\begin{defn}\label{defn:semi-reliability}
  A sequence $(b_i : i < n)$ is an \emph{invariant sequence over $A$} if $b_i \equiv_A b_j$ for each $i < j < n$ and $b_i \indi_A b_{<i}$ for each $i < n$.

  Given a class of $A$-invariant types $\Ic$, an $A$-invariant type $p(x)$ is \emph{semi-reliably in $\Ic$} if it is in the largest class $\Rc \subseteq \Ic$ satisfying that for any $p(x_0) \in \Rc$ and $q(x_0,\dots,x_{n-1}) \in S(A)$ extending $(p\res A)(x_0)$, if $q(\xbar)$ is the type of an invariant sequence over $A$, then there is an $r(\xbar) \in \Rc$ extending $p(x_0) \cup \dots \cup p(x_{n-1}) \cup q(x_0,\dots,x_{n-1})$.

  If $\Ic$ is the class of all $A$-invariant types and $p(x)$ is reliably in $\Ic$, then we say that $p(x)$ is \emph{semi-reliably $A$-invariant}. If $\Ic$ is the class of $A$-coheirs and $p(x)$ is reliably in $\Ic$, we say that $p(x)$ a \emph{semi-reliable $A$-coheir}.
\end{defn}

\begin{fact}[{\cite[Thm.~2.14]{NCTP}}]\label{fact:reliable-existence}
  Any type over an invariance base $A$ extends to a semi-reliably $A$-invariant type. Any type over a model $M$ extends to a semi-reliable $M$-coheir.
\end{fact}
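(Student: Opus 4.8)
The plan is to unwind the greatest‑fixed‑point definition of semi‑reliability into a property of a single type, and then build such a type by a transfinite recursion over a monster model $\mathfrak{C}$.

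\textbf{Reducing to ``good'' types.} Call a global $A$‑invariant type $p(x)$ \emph{good} if for every $n<\omega$ and every type $q(x_0,\dots,x_{n-1})\in S(A)$ of an invariant sequence over $A$ with $q\res x_0$ extending $p\res A$, the partial type $p(x_0)\cup\dots\cup p(x_{n-1})\cup q(x_0,\dots,x_{n-1})$ has a global $A$‑invariant extension. I claim $p$ is semi‑reliably $A$‑invariant iff it is good. One direction is immediate: apply the closure clause of \cref{defn:semi-reliability} to $p$, viewed as a one‑variable member of the maximal class $\Rc$, to get for each demand $q$ a global $A$‑invariant $r\in\Rc$ extending $p(x_0)\cup\dots\cup p(x_{n-1})\cup q$. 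For the converse, the class $\Sc$ of all $A$‑invariant types $r(x_0,\dots,x_{n-1})$ all of whose one‑variable restrictions are good itself satisfies the closure clause --- given good $p(x_0)\in\Sc$ and a demand $q$, goodness yields an $r$ with each $r\res x_i$ equal to the complete type $p$, hence good, so $r\in\Sc$ --- whence $\Sc\subseteq\Rc$ and every good type is semi‑reliably $A$‑invariant. Running the identical argument inside the class of global $M$‑coheirs (using that restrictions of coheirs are coheirs) shows $p$ is a semi‑reliable $M$‑coheir iff $p$ is an $M$‑coheir such that each such partial type has a global $M$‑coheir extension. So it suffices to produce, over an invariance base $A$, a good $A$‑invariant extension of a given $p_0\in S(A)$, and over a model $M$, a good $M$‑coheir extension.

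\textbf{The construction.} Enumerate all pairs $(\varphi_\alpha(x;\bar y),\bar c_\alpha)$ with $\bar c_\alpha$ a tuple from $\mathfrak{C}$ as $\alpha<\lambda$, and note the \emph{demands} (types $q$ of invariant sequences over $A$ extending $p_0$) form a set, being types over $A$. Build an increasing continuous chain of $\operatorname{Aut}(\mathfrak{C}/A)$‑invariant partial types $p_0\subseteq p_\alpha$ ($\alpha\le\lambda$), maintaining ``$p_\alpha$ is good so far'': for every demand $q$, $p_\alpha(x_0)\cup\dots\cup p_\alpha(x_{n-1})\cup q$ extends to a global $A$‑invariant type. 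The base case is exactly where the invariance‑base hypothesis enters --- a demand $q$ is a complete type over $A$, hence has a global $A$‑invariant extension, and $p_0(x_0)\cup\dots\cup p_0(x_{n-1})\cup q$ is just $q$. At a successor step decide $\varphi_\alpha(x;\bar c_\alpha)$ by adjoining to $p_\alpha$ either $\{\varphi_\alpha(x;\bar c):\bar c\equiv_A\bar c_\alpha\}$ or $\{\neg\varphi_\alpha(x;\bar c):\bar c\equiv_A\bar c_\alpha\}$ so as to keep good‑so‑far; after $\lambda$ steps $p:=p_\lambda$ is a complete global $A$‑invariant type extending $p_0$, and being good so far against every demand, it is good. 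For the coheir statement one runs the same recursion inside the $M$‑coheirs, adjoining $\varphi_\alpha$ positively only when finite satisfiability in $M$ is preserved and tracking that the witnessing extensions can be chosen finitely satisfiable in $M$.

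\textbf{The main obstacle.} Everything hinges on the successor step, i.e.\ on a preservation lemma: if $p_\alpha$ is good so far and $\varphi(x;\bar c_\alpha)$ is not decided by $p_\alpha$, then one of the two extensions is still good so far \emph{with respect to every demand simultaneously} --- equivalently, there is no pair of demands $q_1,q_2$ with $q_1$ incompatible (over $p_\alpha$) with $\varphi$ holding of all coordinates and $q_2$ incompatible with $\neg\varphi$ holding of all coordinates. I expect ruling this out to be a broom‑lemma‑type sweeping argument: from a global $A$‑invariant $r\supseteq p_\alpha(x_0)\cup\dots\cup p_\alpha(x_{n-1})\cup q$ and an invariant sequence $(b_i:i<n)\models q$, one uses the $\indi$‑witnesses recording each $b_i\indi_A b_{<i}$, together with the $A$‑invariance of the ambient types, to modify $r$ coordinate by coordinate until all coordinates agree on $\varphi(\,\cdot\,;\bar c_\alpha)$ --- the leverage being that $q$, a type over $A$, does not mention $\bar c_\alpha$, so there is room to homogenize. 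This homogenization, carried out uniformly over all demands (and, for the coheir statement, while staying finitely satisfiable in $M$), together with the verification that good‑so‑far survives limit stages --- which again uses the invariance‑base hypothesis, to pass from consistency of an $\operatorname{Aut}(\mathfrak{C}/A)$‑invariant partial type to an $A$‑invariant completion --- is the technical heart, and the step I expect to be genuinely delicate.
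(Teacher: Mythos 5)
This is a \emph{Fact} imported from \cite[Thm.~2.14]{NCTP}; the paper gives no proof of it, so there is no in-paper argument to compare with. Your reduction of semi-reliable invariance to ``goodness'' of a single type is a correct unwinding of the coinductive definition: the class $\Sc$ of $A$-invariant types all of whose one-variable restrictions are good is closed under the defining clause (since the witnessing $r$ extends the complete global type $p$ in each coordinate, every one-variable restriction of $r$ equals $p$ and is therefore good), so $\Sc \subseteq \Rc$, and conversely membership in $\Rc$ gives goodness directly.

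The construction that follows, however, has a genuine gap that you yourself flag. Everything rests on the successor-step preservation lemma --- that for an undecided $\varphi_\alpha(x,\bar c_\alpha)$, one of the two $\mathrm{Aut}(\mathfrak{C}/A)$-orbit closures of $p_\alpha$ remains good against \emph{every} demand simultaneously --- and you give neither a proof nor a precise statement of what the ``homogenization'' via $\indi$-witnesses is supposed to accomplish; ``I expect ruling this out to be a broom-lemma-type sweeping argument'' is a prediction, not an argument. This is the entire content of the theorem, not a loose end; the limit stages and the reduction are routine by comparison. (The limit stage also quietly uses that over an invariance base every consistent $\mathrm{Aut}(\mathfrak{C}/A)$-invariant partial type has a complete $A$-invariant extension, another nontrivial point left unproved.) Finally, the paper itself signals that the cited proof goes differently: immediately after the Fact it notes that \cite[Thm.~2.14]{NCTP} actually constructs the strictly stronger \emph{reliably} $A$-invariant types, and a footnote describes that construction (like that of canonical coheirs) as ``closely related to the broom lemma.'' So the cited argument almost certainly builds a whole closed class $\Rc$ at once via a broom-lemma-style argument, rather than deciding formulas one at a time to produce a single ``good'' type --- your route, even if it could be completed, would be a different proof.
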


The concept given here in \cref{defn:semi-reliability} is only called `semi-reliability' because the special invariant types built in \cite[Thm.~2.14]{NCTP} actually satisfy a stronger property (called there `reliability'), but both in the argument there and in (most of) the proofs here, only semi-reliability is actually used. If it turns out that semi-reliability really is the more useful notion, it may make sense to change terminology to keep the names of the most used concepts short (perhaps by calling semi-reliability `reliability' and reliability something else).

Although it is clear that both semi-reliable invariance and bi-invariance imply Kim-strict invariance, it is not clear at the moment what other implications hold. %

\begin{quest}\label{quest:bi-invariant-reliably-invariant}
  Is every bi-invariant type semi-reliably invariant? Is Kim-strict invariance equivalent to semi-reliable invariance?
\end{quest}

An analogous question for heir-coheirs was asked in \cite[Quest.~2.10]{NCTP}.

In the proof of the following proposition (and elsewhere in the paper), when we are dealing with a type $p(\xbar)$ in which the variables $\xbar$ are naturally understood as some family $(x_i : i \in I)$ indexed by some set $I$, we will denote this by $p(x_i : i \in I)$. %

\begin{figure}
  \centering
  \begin{tikzpicture}
    \begin{scope}[scale=1.5]
      \draw[dotted,line width=0.75pt] (0,0) -- (2,0) -- (2,2) -- (0,2) -- cycle;
      \begin{scope}[shift={(1,1)}]
        \draw[line width=1.5pt] (0,0) -- (1,0) -- (1,1) -- node[above] {$(b^d_\sigma : \sigma \in (2^2)^d)$} (0,1) -- cycle;
        \draw[line width=1pt] (0,0.5) -- (1,0.5);
        \draw[line width=1pt] (0.5,0) -- (0.5,1);
        \coordinate (A) at (1.2,0.5);
        \foreach \ii in {0,1} {
          \foreach \jj in {0,1} {
            \begin{scope}[shift={(0.5*\ii,0.5*\jj)}]
              \draw[line width=0.5pt] (0,0.25) -- (0.5,0.25);
              \draw[line width=0.5pt] (0.25,0) -- (0.25,0.5);
              \foreach \iii in {0,1} {
                \foreach \jjj in {0,1} {
                  \begin{scope}[shift={(0.25*\iii,0.25*\jjj)}]
                    \draw[line width=0.5pt,opacity=0.25] (0,0.125) -- (0.25,0.125);
                    \draw[line width=0.5pt,opacity=0.25] (0.125,0) -- (0.125,0.25);
                    \foreach \iiii in {0,1} {
                      \foreach \jjjj in {0,1} {
                        \begin{scope}[shift={(0.125*\iiii,0.125*\jjjj)}]
                          \draw[line width=0.5pt,opacity=0.1] (0,0.0625) -- (0.125,0.0625);
                          \draw[line width=0.5pt,opacity=0.1] (0.0625,0) -- (0.0625,0.125);
                        \end{scope}
                      }
                    }
                  \end{scope}
                }
              }
            \end{scope}
          }
        }
      \end{scope}
      \begin{scope}[shift={(4,0)}]
        \draw[dotted,line width=0.75pt] (0,0) -- (2,0) -- (2,2) -- (0,2) -- cycle;
        \draw[line width=1.5pt] (1,0) -- (1,2) -- (2,2) -- (2,0) -- node[below] {$(b^{d+1}_{(0,i)\concat\sigma} : \sigma \in (2^2)^d,~i<2)$} cycle;
        \draw[line width=1.5pt] (1,1) -- (2,1);
        \coordinate (B) at (0.8,0.5);
        \coordinate (C) at (0.8,1.5);
        \coordinate (D) at (2.2,1);
        \foreach \i in {0,1} {
          \begin{scope}[shift={(1,\i)}]
            \draw[line width=1pt] (0,0.5) -- (1,0.5);
            \draw[line width=1pt] (0.5,0) -- (0.5,1);
            \foreach \ii in {0,1} {
              \foreach \jj in {0,1} {
                \begin{scope}[shift={(0.5*\ii,0.5*\jj)}]
                  \draw[line width=0.5pt] (0,0.25) -- (0.5,0.25);
                  \draw[line width=0.5pt] (0.25,0) -- (0.25,0.5);
                  \foreach \iii in {0,1} {
                    \foreach \jjj in {0,1} {
                      \begin{scope}[shift={(0.25*\iii,0.25*\jjj)}]
                        \draw[line width=0.5pt,opacity=0.25] (0,0.125) -- (0.25,0.125);
                        \draw[line width=0.5pt,opacity=0.25] (0.125,0) -- (0.125,0.25);
                        \foreach \iiii in {0,1} {
                          \foreach \jjjj in {0,1} {
                            \begin{scope}[shift={(0.125*\iiii,0.125*\jjjj)}]
                              \draw[line width=0.5pt,opacity=0.1] (0,0.0625) -- (0.125,0.0625);
                              \draw[line width=0.5pt,opacity=0.1] (0.0625,0) -- (0.0625,0.125);
                            \end{scope}
                          }
                        }
                      \end{scope}
                    }
                  }
                \end{scope}
              }
            }
          \end{scope}
        }
      \end{scope}
      \begin{scope}[shift={(8,0)}]
        \draw[line width=1.5pt] (0,0) -- node[below] {$(b^{d+1}_\sigma : \sigma \in (2^2)^{d+1})$} (2,0) -- (2,2) -- (0,2) -- cycle;
        \draw[line width=1.5pt] (1,0) -- (1,2);
        \draw[line width=1.5pt] (0,1) -- (2,1);
        \coordinate (E) at (-0.2,1);
        \coordinate (F) at (1.5,2.2);
        \foreach \i in {0,1} {
          \foreach \j in {0,1} {
            \begin{scope}[shift={(\i,\j)}]
              \draw[line width=1pt] (0,0.5) -- (1,0.5);
              \draw[line width=1pt] (0.5,0) -- (0.5,1);
              \foreach \ii in {0,1} {
                \foreach \jj in {0,1} {
                  \begin{scope}[shift={(0.5*\ii,0.5*\jj)}]
                    \draw[line width=0.5pt] (0,0.25) -- (0.5,0.25);
                    \draw[line width=0.5pt] (0.25,0) -- (0.25,0.5);
                    \foreach \iii in {0,1} {
                      \foreach \jjj in {0,1} {
                        \begin{scope}[shift={(0.25*\iii,0.25*\jjj)}]
                          \draw[line width=0.5pt,opacity=0.25] (0,0.125) -- (0.25,0.125);
                          \draw[line width=0.5pt,opacity=0.25] (0.125,0) -- (0.125,0.25);
                          \foreach \iiii in {0,1} {
                            \foreach \jjjj in {0,1} {
                              \begin{scope}[shift={(0.125*\iiii,0.125*\jjjj)}]
                                \draw[line width=0.5pt,opacity=0.1] (0,0.0625) -- (0.125,0.0625);
                                \draw[line width=0.5pt,opacity=0.1] (0.0625,0) -- (0.0625,0.125);
                              \end{scope}
                            }
                          }
                        \end{scope}
                      }
                    }
                  \end{scope}
                }
              }
            \end{scope}
          }
        }
      \end{scope}

    \draw[->] (A) -- (C);
    \draw[->] (A) -- node[sloped,below] {clone using $p$} (B);
    \draw[->] (D) -- node[below] {clone using $q$} (E);
    \draw[->] (D) to [out = 45, in = 100] (F);
    \end{scope}
  \end{tikzpicture}
  \caption{The construction in the proof of \cref{thm:failure-of-Kim-bi-invariant-to-weaves}. In the first step, every up-$m$-comb (with $m=1$ for semi-reliably invariant $p$) of size $\ell \leq m$ in the lower clone square realizes $p^{\otimes \ell}$ over the original upper square. In the second step, every right-$n$-comb (with $n=1$ for semi-reliably invariant $q$) of size $\ell \leq n$ in the clone rectangle on the left realizes $q^{\otimes \ell}$ over the original rectangle on the right.}
  \label{fig:construction}
\end{figure}
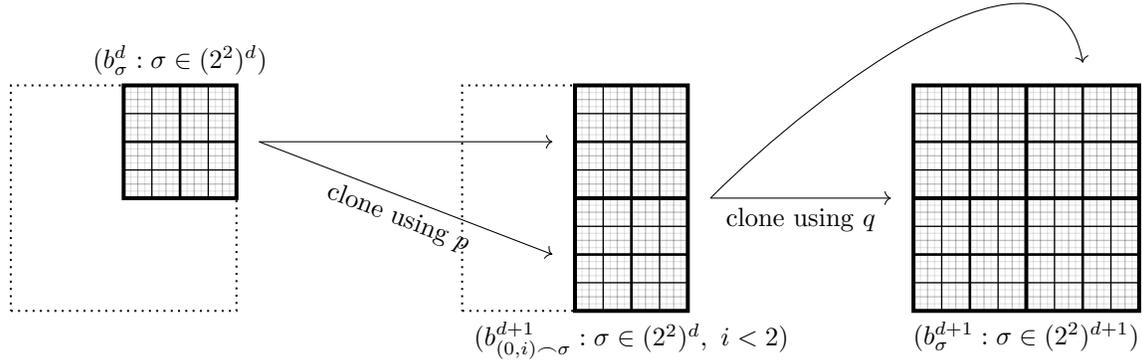

\begin{thm}\label{thm:failure-of-Kim-bi-invariant-to-weaves}
  Fix a theory $T$, $k < \omega$, and $m,n \leq \omega$.
  \begin{enumerate}
  \item\label{bi-bi-fail} If $T$ fails to satisfy \textup{$(k$, $m$-strongly bi-invariant, $n$-strongly bi-invariant$)$}-\KL{}, then $T$ has a $(k,m,n)$-weave of depth $\omega$.
  \item\label{bi-reliable-fail} If $T$ fails to satisfy \textup{$(k$, $m$-strongly bi-invariant, semi-reliably invariant$)$}-\KL{}, then $T$ has a $(k,m,1)$-weave of depth $\omega$.
  \item\label{reliable-bi-fail} If $T$ fails to satisfy \textup{$(k$, semi-reliably invariant, $n$-strongly bi-invariant$)$}-\KL{}, then $T$ has a $(k,1,n)$-weave of depth $\omega$.
  \item\label{reliable-reliable-fail} If $T$ fails to satisfy \textup{$(k$, semi-reliably invariant, semi-reliably invariant$)$}-\KL{}, then $T$ has a $(k,1,1)$-weave of depth $\omega$.
  \end{enumerate}
\end{thm}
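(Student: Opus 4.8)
The plan is to prove all four statements by one construction, which I describe in the case where $p$ is $m$-strongly bi-invariant and $q$ is $n$-strongly bi-invariant, indicating afterward the (minor) changes needed when $p$ or $q$ is instead only semi-reliably invariant. Fix witnesses to the relevant failure of Kim's lemma: a small set $A$, a formula $\varphi(x,b)$, and global $A$-invariant types $p,q \supseteq \tp(b/A)$ of the stated kinds such that $\varphi(x,b)$ $k$-divides along $p$ but does not divide along $q$. Write $m'$ for $m$, except $m'=1$ when $p$ is only assumed semi-reliably invariant, and likewise $n'$ for $n$. By \cref{prop:finite-depth-to-depth-omega} it suffices to produce, for each finite $d$, a $(k,m',n')$-weave of depth $d$; I build these by induction on $d$, at each step alternately ``cloning using $p$'' and ``cloning using $q$'' exactly as in Figure~\ref{fig:construction}.

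The inductive hypothesis at depth $d$ is that there is a family $(b_\sigma : \sigma \in (2^2)^d)$ of realizations of $\tp(b/A)$ such that, with respect to a fixed linear order on $(2^2)^d$ refining the tree relations, every up-$m'$-comb is a Morley sequence of $p$ over $A$ and every wide right-$n'$-comb is a Morley sequence of $q$ over $A$; for the induction to propagate one must also record auxiliary global $A$-invariant types extending $\tp((b_\sigma)_\sigma/A)$ that encode how the family can be copied ``generically below'' (in the $p$-direction) and ``generically to the right'' (in the $q$-direction), so that such a copy keeps the relevant comb-genericity over the context it is placed over. Granting this, $(b_\sigma)_\sigma$ is a strong $(k,m',n')$-weave of depth $d$: for an up-$m'$-comb $C$ the set $\{\varphi(x,b_\sigma) : \sigma \in C\}$ is, up to conjugation over $A$, a subset of the $k$-inconsistent set $\{\varphi(x,b_i) : i<\omega\}$ for the Morley sequence generated by $p$ over $A$, hence $k$-inconsistent; dually, for a wide right-$n'$-comb $C$ the set $\{\varphi(x,b_\sigma) : \sigma \in C\}$ is, up to conjugation over $A$, an initial segment of the corresponding set for the Morley sequence generated by $q$ over $A$, hence consistent because $\varphi(x,b)$ does not divide along $q$. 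In particular it is a $(k,m',n')$-weave, which is all \cref{prop:finite-depth-to-depth-omega} needs.

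The inductive step passes from depth $d$ to depth $d+1$ in two substeps. First (``clone using $p$''), prepend a new root coordinate and populate only the ``left half'' of sequences with root first-coordinate $0$: the ``upper-left'' quadrant with root $(0,1)$ gets a copy of the depth-$d$ family, and the ``lower-left'' quadrant with root $(0,0)$ gets a copy placed $p$-generically below it, that is, realizing the appropriate auxiliary invariant type over $A$ together with the whole upper-left quadrant. Unwinding \cref{defn:narrow-wide-comb} shows that an up-$m'$-comb of the left half either lies in one quadrant (and is then handled by one of the two copies) or meets both, in which case it must decompose as $C_0 \cup C_1$ with $C_0$ in the lower-left quadrant, $C_1$ in the upper-left quadrant, $|C_0|\le m'$, and $C_0$ narrowly below $C_1$; the genericity of the placement makes $(b_\sigma : \sigma \in C_0)$ a Morley sequence of $p$ over $A \cup C_1$, so, $C_1$ already being one over $A$, the whole comb is one over $A$. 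Meanwhile no (wide) right-$n'$-comb within this half crosses the new horizontal split, since there it has constant root second-coordinate, so the ``$q$''-part of the hypothesis survives unchanged. The second substep (``clone using $q$'') is dual: keep the left half and fill the right half (root first-coordinate $1$) with a copy of the left half placed $q$-generically to its right; now up-$m'$-combs cannot cross the new vertical split, while a crossing (wide) right-$n'$-comb decomposes as $C_0 \cup C_1$ with $C_0$ on the left, $|C_0|\le n'$, and $C_0$ (widely) to the left of $C_1$, hence becomes a Morley sequence of $q$ over $A$. Carrying the auxiliary invariant types through both substeps closes the induction.

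The main obstacle is producing these generic copies. To clone using $p$ one needs a copy of the current configuration that is at once conjugate to it over $A$ (so as to inherit the entire hypothesis) and placed generically in the $p$-direction (so that crossing up-combs become $p$-Morley sequences), and, crucially, one must be able to keep alternating---after the ``clone using $q$'' substep one must be able to clone using $p$ again. This iterability is exactly what the assumption that $p^{\otimes m'}$ is $A$-bi-invariant provides: realizing $p^{\otimes\ell}$ for $\ell\le m'$ (the width of one comb row) over a context $D$ leaves $D$ invariant over $A$ together with the realization, so the accumulating parameters stay $A$-invariantly extendible; a merely $A$-invariant $p$ permits one clone but not the iteration, the usual obstruction in this circle of ideas. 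When $m'=1$ the clones only ever add single-element rows, and full bi-invariance is not needed: \cref{defn:semi-reliability}, the closure of the class under extending a type to the type of an invariant sequence, is precisely the input that keeps the construction alive, so semi-reliable invariance of $p$ suffices; the situation for $q$ and $n'$ is symmetric. The remaining delicate---though elementary---ingredient is the comb bookkeeping invoked in the step: that, relative to the two root coordinates, up-$m'$-combs and (wide) right-$n'$-combs are confined and decompose exactly in the two ways used above. This must be checked carefully from \cref{defn:narrow-wide-comb}, and it is where the asymmetry between $m$ and $n$ noted after \cref{defn:weave} becomes visible.
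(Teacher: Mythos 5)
Your proposal follows essentially the same route as the paper's proof: induct on depth $d$, at each step first cloning the existing $(2^2)^d$-family ``below'' so that crossing up-$m$-combs become Morley sequences in $p$, then cloning the resulting half ``sideways'' so that crossing right-$n$-combs become Morley sequences in $q$, and finishing by compactness via \cref{prop:finite-depth-to-depth-omega}. You also correctly identify the two mechanisms that make the clone step legal: $m$-strong bi-invariance of $p$ (to obtain the $A$-invariant type over $A\ebar$ that gets realized over the clone) and, in the $m'=1$ case, semi-reliability (the closure under invariant-sequence extensions in \cref{defn:semi-reliability}), which is where one genuinely has to carry an auxiliary semi-reliably invariant type through the induction -- exactly the $q_d$, $q_{d+\nicefrac{1}{2}}$ bookkeeping in the paper's proof of (\ref{bi-reliable-fail}).

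Two small inaccuracies in your narrative, neither of which damages the argument. First, in the bi-bi case (\ref{bi-bi-fail}) there is no need to carry auxiliary invariant types as part of the inductive hypothesis: the witnessing invariant type $r$ is produced afresh at each stage from $\ebar \models p^{\otimes m}$ together with bi-invariance, and discarded afterward; the auxiliary types are only needed in the semi-reliable direction. Second, your remark that ``a merely $A$-invariant $p$ permits one clone but not the iteration'' is not right: even a single clone step already requires an $A$-invariant type extending $\tp(\text{old family}/A\ebar)$, and the existence of such a type is precisely what bi-invariance (or, for $m'=1$, semi-reliability) supplies -- mere invariance of $p$ gives neither. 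Also, be careful in the clone step to justify that the \emph{restriction} of the placement type to an up-comb subtuple of the new family actually implies $p^{\otimes}$-genericity over the old family; the paper arranges this by having the \emph{old} family realize the invariant type $r\supseteq\tp(\text{old}/A\ebar)$ over $A$ plus the clone, so that invariance of $r$ transports the ``$\ebar$ is $p^{\otimes m}$-generic over old'' condition to any subtuple of the clone with the right type over $A$; your phrasing (clone realizes an auxiliary type over $A$ plus the old family) needs an explicit argument that the auxiliary type restricts correctly on up-comb variables.
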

\begin{proof}[Proof of (\ref{bi-bi-fail})]
  Fix a formula $\varphi(x,b)$, a set of parameters $A$, and $A$-invariant types $p(y),q(y) \supset \tp(b/A)$ that witness the failure of \textup{$(k,~m$-strongly bi-invariant, $n$-strongly bi-invariant,$)$-\KL{}}. In particular, $p$ is $m$-strongly $A$-bi-invariant, $q$ is $n$-strongly $A$-bi-invariant, $\varphi(x,b)$ $k$-divides along $p$ but does divide along $q$.

  We will prove by induction on $d$ that $T$ has a family $(b^d_\sigma : \sigma \in (2^2)^d)$ of realizations of $\tp(b/A)$ satisfying the following properties:
  \begin{itemize}
  \item[$(\mathtt{U})$] For each up-$m$-comb $C \subseteq (2^2)^d$, $\{b^d_\sigma : \sigma \in C\}$ is a Morley sequence in $p$.
  \item[$(\mathtt{R})$] For each right-$n$-comb $C \subseteq (2^2)^d$, $\{b^d_\sigma : \sigma \in C\}$ is a Morley sequence in $q$.
  \end{itemize}
  This is clearly trivial in the case of $d=0$. Suppose that we have a family $(b^d_\sigma : \sigma \in (2^2)^d)$ satisfying $(\mathtt{R})$ and $(\mathtt{U})$ for some $d < \omega$. Let $b^{d+1}_{(1,1)\concat \sigma} = b^d_\sigma$ for each $\sigma \in (2^2)^d$. Let $\ebar \models p^{\otimes m} \res A \cup ( b^{d+1}_{(1,1)\concat \sigma} : \sigma \in (2^2)^d)$. Since $p$ is $m$-strongly $A$-bi-invariant, we have that $( b^{d+1}_{(1,1)\concat \sigma} : \sigma \in (2^2)^d) \indi_A \ebar$. Let $r(y_\sigma : \sigma \in (2^2)^d)$ be an $A$-invariant type extending $\tp((b^{d+1}_{(1,1)\concat\sigma} : \sigma \in (2^2)^d) / A\ebar)$. Find a family $(b^{d+1}_{(1,0)\concat\sigma} : \sigma \in (2^2)^d) \equiv_A ( b^{d+1}_{(1,1)\concat\sigma} : \sigma \in (2^2)^d)$ satisfying that $( b^{d+1}_{(1,1)\concat \sigma} : \sigma \in (2^2)^d) \models r \res A \cup (b^{d+1}_{(1,0)\concat \sigma} : \sigma \in (2^2)^d)$.

  Now consider the family $(b^{d+1}_{(1,i)\concat\sigma} : \sigma \in (2^2)^d,~i < 2)$. We need to verify that this family satisfies $(\mathtt{U})$. Fix an up-$m$-comb $C \subseteq \{(1,i)\concat \sigma : \sigma\in (2^2)^d,~i<2\}$. Let $C_i = \{ (1,i)\concat\sigma : (1,i)\concat\sigma \in C\}$ for both $i < 2$. It must be the case that $C_0$ and $C_1$ are both up-$m$-combs and moreover that $|C_0| \leq m$. In particular, this implies that (in some enumeration), $(b^{d+1}_{\sigma} : \sigma \in C_0) \models p^{\otimes |C_0|} \res A \cup ( b^{d+1}_{(1,1)\concat\sigma} : \sigma \in (2^2)^d)$ and so a fortiori $(b^{d+1}_{\sigma} : \sigma \in C_0) \models p^{\otimes |C_0|} \res A \cup \{b^{d+1}_{\sigma} : \sigma \in C_1\}$. Therefore $(b^{d+1}_{\sigma} : \sigma \in C_0 \cup C_1)$ is (in some enumeration) a Morley sequence in $p$ by the induction hypothesis.

  By essentially the same argument (with $q$ in place of $p$), we can extend $(b^{d+1}_{(1,i)\concat\sigma} : \sigma \in (2^2)^d,~i < 2)$ to a family $(b^{d+1}_{\sigma} : \sigma \in (2^2)^{d+1})$ such that
  \begin{itemize}
  \item $(b^{d+1}_{ (0,i)\concat\sigma} : \sigma \in (2^2)^d,~i < 2) \equiv_A (b^{d+1}_{(1,i)\concat\sigma} : \sigma \in (2^2)^d,~i < 2)$ and
  \item for any right-$n$-comb $C \subseteq \{(0,i)\concat\sigma : \sigma \in (2^2)^d,~i<2\}$, $(b^{d+1}_{\sigma} : \sigma \in C)$ is (in some enumeration) a Morley sequence in $q$ of length $|C|$ over $A \cup \{b^{d+1}_{(1,i)\concat\sigma} : \sigma \in (2^2)^d,~i<2\}$.
  \end{itemize}
  The first bullet implies that the full family $(b^{d+1}_{\sigma} : \sigma \in (2^2)^{d+1})$ satisfies $(\mathtt{U})$ (since an up-$n$-comb in $(2^2)^{d+1}$ must be contained entirely in either $\{(0,i)\concat\sigma : \sigma \in (2^2)^d,~i<2\}$ or $\{(1,i)\concat\sigma : \sigma \in (2^2)^d,~i<2\}$). The second bullet implies that the full family satisfies $(\mathtt{R})$, so we are done.

  The condition that $\varphi(x,b)$ $k$-divides along $p$ but does not divide along $q$ implies that each of the families $(b^d_\sigma : \sigma \in (2^2)^d)$ is a $(k,m,n)$-weave for $\varphi(x,y)$ of depth $d$. Therefore by \cref{prop:finite-depth-to-depth-omega}, $T$ admits a $(k,m,n)$-weave of depth $\omega$.
\end{proof}

\begin{proof}[Proof of {(\ref{bi-reliable-fail})}]
  Again fix a formula $\varphi(x,b)$, a set of parameters $A$, an $m$-strongly $A$-bi-invariant type $p(y)$, and a reliably $A$-invariant type $q(y)$ such that $\varphi(x,b)$ $k$-divides along $p$ but does not divide along $q$.

  The argument here is very similar to the proof of (\ref{bi-bi-fail}), with some additional bookkeeping needed to manage the semi-reliably invariant type. We will build by induction on $d$ families $(b_\sigma^d : \sigma \in (2^2)^d)$ of realizations of $\tp(b/A)$ satisfying the properties $(\mathtt{U})$ and $(\mathtt{R})$ (with $n = 1$). We will also build a sequence of semi-reliably $A$-invariant types $q_d(y_\sigma : \sigma \in (2^2)^d)$ with the property that the restriction of $q_d$ to each variable $y_\sigma$ is $q(y_\sigma)$.

  For $d=0$, we just take $b_\varnothing^0$ to be $b$ and $q_0(y_\varnothing)$ to be $q(y_\varnothing)$. Suppose we have a family $(b^d_\sigma : \sigma \in (2^2)^d)$ satisfying $(\mathtt{U})$ and $(\mathtt{R})$ as well as a semi-reliably $A$-invariant type $q_d(y_\sigma : \sigma \in (2^2)^d)$ with the property that the restriction of $q_d$ to each variable $y_\sigma$ is $q(y_\sigma)$.

  Let $b^{d+1}_{(1,1)\concat\sigma} = b^d_\sigma$ for each $\sigma \in (2^2)^d$. Find an $A$-invariant type $r(y_\sigma : \sigma \in (2^2)^d)$ in the same manner as in the proof of (\ref{bi-bi-fail}) and similarly build the family $(b^{d+1}_{(1,i)\concat\sigma} : \sigma \in (2^2)^d,~i < 2)$. Since $q_d$ is a semi-reliably $A$-invariant type and since the two-element sequence of tuples $((b^{d+1}_{ (1,1)\concat\sigma} : \sigma \in (2^2)^d),(b^{d+1}_{(1,0)\concat\sigma} : \sigma \in (2^2)^d))$ is an invariant sequence, we can extend
  \[
    q_d(y_{(1,1)\concat\sigma} : \sigma \in (2^2)^d) \cup q_d(y_{(1,0)\concat\sigma} : \sigma \in (2^2)^d) \cup \tp((b^{d+1}_{ (1,1)\concat\sigma} : \sigma \in (2^2)^d),(b^{d+1}_{ (1,0)\concat\sigma} : \sigma \in (2^2)^d)/A)
  \]
  to a semi-reliably $A$-invariant type $q_{d+\nicefrac{1}{2}}(y_{(1,i)\concat\sigma} : \sigma \in (2^2)^d,~i < 2)$ with the property that the restriction of $q_{d+\nicefrac{1}{2}}$ to each $y_{(1,i)\concat\sigma}$ is $q(y_{ (1,i)\concat\sigma})$.

  Now pick $(b^{d+1}_{ (0,i)\concat\sigma} : \sigma \in (2^2)^d,~i < 2) \models q_{d+\nicefrac{1}{2}} \res A \cup (b^{d+1}_{ (1,i)\concat\sigma} : \sigma \in (2^2)^d,~i < 2)$ and collect these into the family $(b^{d+1}_{\sigma} : \sigma \in (2^2)^{d+1})$. By construction and the induction hypothesis we have that for any wide right-$1$-comb $C \subseteq (2^2)^{d+1}$, $\{b^{d+1}_{\sigma} : \sigma \in C\}$ is a Morley sequence in $q$ (in some order), so $(\mathtt{R})$ holds. Moreover, by the same argument as in the proof of (\ref{bi-bi-fail}), we have that $(\mathtt{U})$ holds. Finally, since the two-element sequence of tuples $((b^{d+1}_{ (1,i)\concat\sigma} : \sigma \in (2^2)^d,~i < 2),(b^{d+1}_{ (0,i)\concat\sigma} : \sigma \in (2^2)^d,~i < 2))$ is an invariant sequence, we can extend
  \begin{align*}
    q_{d+\nicefrac{1}{2}}(y_{(1,i)\concat\sigma} : \sigma \in (2^2)^d,~i < 2) &\cup q_{d+\nicefrac{1}{2}}(y_{ (0,i)\concat\sigma} : \sigma \in (2^2)^d,~i < 2) \\ &\cup \tp((b^{d+1}_{(1,i)\concat\sigma} : \sigma \in (2^2)^d,~i < 2),(b^{d+1}_{(0,i)\concat\sigma} : \sigma \in (2^2)^d,~i < 2) / A)
  \end{align*}
to a semi-reliably $A$-invariant type $q_{d+1}(y_{\sigma} : \sigma \in (2^2)^{d+1})$ with the property that the restriction of $q_{d+1}$ to each variable $y_\sigma$ is $q(y_\sigma)$.

The rest of the argument is now the same as in the proof of (\ref{bi-bi-fail}).
\end{proof}

\begin{proof}[Proof of (\ref{reliable-bi-fail}) and (\ref{reliable-reliable-fail})]
  The proofs in these two cases are the same as the proof of (\ref{bi-reliable-fail}), mutatis mutandis.
\end{proof}

As noted earlier, the proof of \cref{thm:failure-of-Kim-bi-invariant-to-weaves} actually gives a strong $(k,m,n)$-weave of depth $\omega$, rather than just a $(k,m,n)$-weave of depth $\omega$, but as we saw in \cref{prop:weave-iff-strong-weave}, these are equivalent anyway.

It seems likely (using ideas from \cite{Mutchnik-NSOP2}) that the statement of \cref{thm:failure-of-Kim-bi-invariant-to-weaves} still holds with $n$-strong heir-coheirs in place of $n$-strongly bi-invariant types and canonical coheirs in place of semi-reliably invariant types---for instance the failure of ($k$, canonical coheir, $n$-strong heir-coheir)--Kim's lemma over models should entail the existence of a $(k,1,n)$-weave of depth $\omega$---but we have not pursued this here.

In the absence of a positive answer to \cref{quest:bi-invariant-reliably-invariant}, it's worth pointing out the following (admittedly awkward) corollary of \cref{thm:failure-of-Kim-bi-invariant-to-weaves}.

\begin{cor}\label{cor:awkward}
  Fix a complete first-order theory $T$ and $k < \omega$.
  \begin{itemize}
  \item If $T$ fails \textup{($k$, bi-invariant or semi-reliably invariant, $n$-strongly bi-invariant)}-\KL{}, then $T$ has a $(k,1,n)$-weave of depth $\omega$.
  \item If $T$ fails \textup{($k$, $m$-strongly bi-invariant, bi-invariant or semi-reliably invariant)}-\KL{}, then $T$ has a $(k,m,1)$-weave of depth $\omega$.
\item If $T$ fails \textup{($k$, bi-invariant or semi-reliably invariant, bi-invariant or semi-reliably invariant)}--Kim's lemma, then $T$ has a $(k,1,1)$-weave of depth $\omega$.
  \end{itemize}

\end{cor}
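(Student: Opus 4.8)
The plan is to derive \cref{cor:awkward} directly from \cref{thm:failure-of-Kim-bi-invariant-to-weaves} by a routine case split, exploiting the fact that a global type is bi-invariant precisely when it is $1$-strongly bi-invariant (as $p^{\otimes 1} = p$), so that the phrase ``bi-invariant or semi-reliably invariant'' denotes the set-theoretic union of two of the classes already appearing in \cref{thm:failure-of-Kim-bi-invariant-to-weaves}.

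For the first bullet, I would start from a witness to the failure of \textup{($k$, bi-invariant or semi-reliably invariant, $n$-strongly bi-invariant)}-\KL{}: unwinding \cref{defn:Kim's-lemmas}, this consists of parameters $A$, a formula $\varphi(x,b)$, an $n$-strongly $A$-bi-invariant type $q(y) \supset \tp(b/A)$, and a type $p(y) \supset \tp(b/A)$ that is either bi-invariant or semi-reliably $A$-invariant, with $\varphi(x,b)$ $k$-dividing along $p$ but not dividing along $q$. The key point is that this \emph{same} tuple then witnesses the failure of one of two more specific Kim's lemmas, depending on which disjunct $p$ satisfies---no uniformity over the union is needed. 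If $p$ is bi-invariant it is $1$-strongly $A$-bi-invariant, so \cref{thm:failure-of-Kim-bi-invariant-to-weaves}(\ref{bi-bi-fail}) with $m=1$ yields a $(k,1,n)$-weave of depth $\omega$; if $p$ is semi-reliably $A$-invariant, then \cref{thm:failure-of-Kim-bi-invariant-to-weaves}(\ref{reliable-bi-fail}) yields a $(k,1,n)$-weave of depth $\omega$.

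The second bullet is handled the same way with the roles of the two type-classes swapped: fix $p$ as the $m$-strongly bi-invariant type and split on whether the target type $q$ is bi-invariant---using \cref{thm:failure-of-Kim-bi-invariant-to-weaves}(\ref{bi-bi-fail}) with $n=1$---or semi-reliably invariant---using \cref{thm:failure-of-Kim-bi-invariant-to-weaves}(\ref{bi-reliable-fail}). For the third bullet I would split on both arguments, producing four cases matched one-to-one with the four parts of \cref{thm:failure-of-Kim-bi-invariant-to-weaves}: both bi-invariant uses part (\ref{bi-bi-fail}) with $m=n=1$; $p$ bi-invariant and $q$ semi-reliably invariant uses part (\ref{bi-reliable-fail}) with $m=1$; $p$ semi-reliably invariant and $q$ bi-invariant uses part (\ref{reliable-bi-fail}) with $n=1$; both semi-reliably invariant uses part (\ref{reliable-reliable-fail}). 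Each case produces a $(k,1,1)$-weave of depth $\omega$.

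Since all the real work is already contained in \cref{thm:failure-of-Kim-bi-invariant-to-weaves}, there is no serious obstacle; the only things to keep straight are the bookkeeping identification of bi-invariance with $1$-strong bi-invariance and the elementary observation that a single counterexample to a Kim's-lemma statement over a union of invariance-type classes is automatically a counterexample over whichever member class its relevant type lies in. This is also why it does not matter that \cref{prop:Kim-monotone} does not directly apply: monotonicity would let us \emph{enlarge} the classes appearing in a failure, not decompose a union, so the explicit case analysis is the right tool.
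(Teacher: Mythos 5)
Your proposal is correct and is exactly the argument the paper intends: the paper's proof just says it is "immediate from the definition of $(k,\Xc,\Yc)$-\KL{} and Theorem~\ref{thm:failure-of-Kim-bi-invariant-to-weaves}," and you have simply made the underlying case split explicit, together with the (correct) identification of bi-invariance with $1$-strong bi-invariance. Your remark about why Proposition~\ref{prop:Kim-monotone} is not the relevant tool here is also accurate.
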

\begin{proof}
  This is immediate from the definition of $(k,\Xc,\Yc)$-\KL{} and \cref{thm:failure-of-Kim-bi-invariant-to-weaves}.
\end{proof}

\section{The converse for $(k,1,1)$-weaves}
\label{sec:converse}

The argument here is similar to arguments in \cite{NCTP}, but we will only give a proof analogous to that of \cite[Prop.~3.1]{NCTP} (which works for both countable and uncountable languages but is more technical). For countable languages, a proof analogous to that of \cite[Prop.~1.5]{NCTP} (in which the $W$ and $L$ sorts are kept fixed) is also possible. %

Like the proof of \cite[Prop.~3.1]{NCTP}, the argument used here is a `forcing plus compactness' argument. In other words, we have some poset $W$ on which we would like to build a sufficiently generic filter (i.e., one meeting some family of dense requirements). The issue is that we don't know that $W$ is $\kappa$-closed for any $\kappa > \aleph_0$, so to deal with this, we take the poset and our partially built filter $(P_i : i \in I)$, bundle them together in a single first-order structure (with each $P_i$ given its own symbol in the language), and pass to elementary extensions (expanding both the poset $W$ and the individual $P_i$'s in the partially built generic filter) in order to make the intersection of the existing $P_i$'s non-empty (at which point we also expand the language by adding a new $P_i$ symbol for a subset of the intersection). In doing so, new requirements show up (since in our case these correspond to formulas with parameters in the model we are building), but we are able to catch our tail (even when the size of the language is a singular cardinal), as all of the requirements are finitary in nature. Since the requirements are moreover axiomatizable in first-order logic, they remain satisfied even when passing to elementary extensions.

We will take the opportunity to give a general framework for these kinds of arguments. This framework is very similar to something like (a higher-cardinality generalization of) the Rasiowa-Sikorski lemma or the Baire category theorem, but we will use category-theoretic (rather than order-theoretic) language for a little bit of extra flexibility.  %

\begin{defn}
  A category $\Cc$ has \emph{${<}\lambda$-sequential colimits} if for any ordinal $\alpha < \lambda$, any diagram $f : \alpha \to \Cc$ has a colimit.

  $\Cc$ has \emph{$\lambda$-sequential colimits} if it has ${<}\lambda^+$-sequential colimits (i.e., the above holds for any $\alpha \leq \lambda$).
\end{defn}

\begin{defn}\label{defn:generic-above}
  Given a small category $\Cc$ and an object $a \in X$, a set $X$ of morphisms in $\Cc$ with domain $a$ is \emph{generic above $a$} if for every morphism $f : a \to b$, there is an object $c \in \Cc$ and a morphism $g : b \to c$ such that $g \circ f \in X$.
\end{defn}

Note that in the following proposition, $F : \lambda \to \Cc$ being a sequential-colimit-preserving functor just means that for any limit ordinal $\alpha < \lambda$, $F(\alpha)$ is the colimit of the diagram $F \res \alpha$. For something like a category of models with elementary embeddings, this is the same thing as a continuous elementary chain.

\begin{prop}\label{prop:Baire-category-category}
  Fix an infinite cardinal $\lambda$. Let $\Cc$ be a small category with ${<}\lambda$-sequential colimits. For each object $a \in \Cc$, let $Q_a$ be a set of sets of morphisms that are generic above $a$ with $|Q_a| \leq \lambda$. For any object $c \in \Cc$, there exists a sequential-colimit-preserving functor $F : \lambda \to \Cc$ such that $F(0) = c$ and for each $\alpha < \lambda$ and each $X \in Q_{F(\alpha)}$, there is a $\beta < \lambda$ with $\alpha < \beta$ such that $F(\alpha \to \beta) \in X$.
\end{prop}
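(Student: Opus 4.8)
The plan is a transfinite recursion of length $\lambda$: at limit stages the value of $F$ is forced (it must be the colimit of what came before), and each successor step is used to discharge exactly one density requirement, i.e.\ one pair $(\alpha, X)$ with $X \in Q_{F(\alpha)}$. Two features make the bookkeeping slightly delicate and dictate the scheme: the collection of requirements is not available in advance, since $Q_{F(\alpha)}$ only exists once $F(\alpha)$ has been built, and $\lambda$ may be singular, so one cannot simply ``serve requirement number $\alpha$ at stage $\alpha$.'' Both are handled by scheduling via a G\"odel-style pairing. All the choices made below rely on the axiom of choice.

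Fix a bijection $\Gamma : \lambda \times \lambda \to \lambda$ with $\Gamma(\delta, i) \geq \delta$ for all $\delta, i < \lambda$; the restriction of G\"odel's canonical pairing function has this property, and it is a bijection precisely because $\lambda$ is an infinite cardinal. Write $\Gamma^{-1}(\alpha) = (\delta_\alpha, i_\alpha)$, so $\delta_\alpha \leq \alpha$. The recursion will build, for each $\alpha < \lambda$: the object $F(\alpha)$; transition morphisms $F(\beta \to \alpha)$ for $\beta \leq \alpha$ (obtained by composing the successor steps, and as the legs of colimit cocones at limit stages); and, whenever $Q_{F(\alpha)} \neq \varnothing$, a surjection $e_\alpha : \lambda \to Q_{F(\alpha)}$, which exists since $1 \leq |Q_{F(\alpha)}| \leq \lambda$. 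Set $F(0) = c$.

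At a limit $\alpha < \lambda$, let $F(\alpha) := \mathrm{colim}(F \res \alpha)$, which exists by the hypothesis that $\Cc$ has ${<}\lambda$-sequential colimits, with the $F(\beta \to \alpha)$ the colimit cocone legs; these splice correctly with the earlier transition morphisms, since a colimit cocone is by definition compatible with its diagram. At a successor $\alpha + 1$: if $Q_{F(\delta_\alpha)} = \varnothing$, set $F(\alpha+1) := F(\alpha)$ and $F(\alpha \to \alpha+1) := \mathrm{id}$; otherwise put $X := e_{\delta_\alpha}(i_\alpha) \in Q_{F(\delta_\alpha)}$, which is generic above $F(\delta_\alpha)$ by hypothesis, and apply \cref{defn:generic-above} to the morphism $f := F(\delta_\alpha \to \alpha)$ (defined since $\delta_\alpha \leq \alpha$, possibly the identity) to obtain an object $d$ and a morphism $g : F(\alpha) \to d$ with $g \circ f \in X$; then set $F(\alpha+1) := d$ and $F(\alpha \to \alpha+1) := g$, so that $F(\delta_\alpha \to \alpha+1) = g \circ f \in X$.

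This $F$ is sequential-colimit-preserving by construction and satisfies $F(0) = c$. For the density clause, given $\alpha < \lambda$ and $X \in Q_{F(\alpha)}$, choose $i < \lambda$ with $e_\alpha(i) = X$ and put $\beta := \Gamma(\alpha, i) + 1$; since $\delta_{\Gamma(\alpha,i)} = \alpha$ and $i_{\Gamma(\alpha,i)} = i$, the successor step taken at $\Gamma(\alpha,i)$ ensured $F(\alpha \to \beta) \in X$, and $\alpha < \beta < \lambda$ (the upper bound because $\lambda$, being an infinite cardinal, is a limit ordinal). I expect the only genuine obstacle to be this scheduling argument: making sure each requirement is served only once the object it lives over has been built (guaranteed by $\Gamma(\delta,i) \geq \delta$) and that no requirement is forgotten (guaranteed by surjectivity of $\Gamma$ together with the $e_\alpha$, which is where the bound $|Q_a| \leq \lambda$ is used). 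Verifying functoriality of $F$ and the compatibility of the colimit legs with composites is routine.
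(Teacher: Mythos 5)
Your proof is correct and follows essentially the same transfinite recursion as the paper's: enumerate each $Q_a$ by $\lambda$, schedule requirements via a pairing of $\lambda \times \lambda$ that ensures a requirement $(\delta,i)$ is only served at a stage $\alpha \geq \delta$, act on each requirement at its scheduled successor stage by applying genericity to the accumulated transition morphism, and take colimits at limits. The only difference is cosmetic bookkeeping (you use a bijection with $\Gamma(\delta,i) \geq \delta$ and serve each requirement once, while the paper uses a cofinally-repeating enumeration and skips stages where $\gamma_i > i$); your phrasing of the successor step is in fact a bit more careful than the paper's, which abbreviates the application of genericity in a way that reads as if the new transition morphism had domain $a_{\gamma_i}$ rather than $a_i$.
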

\begin{proof}
  For each object $a \in \Cc$, let $(X_i^a : i < \lambda)$ be an enumeration of $Q_a$ (padded with instances of the full set of objects in $\Cc$ if $|Q_a| < \lambda$). Fix an enumeration $((\gamma_i,\delta_i) : i < \lambda)$ of $\lambda^2$ with the property that for every $(\alpha,\beta) \in \lambda^2$, the set $\{i < \lambda : (\gamma_i,\delta_i) = (\alpha,\beta)\}$ is cofinal in $\lambda$.

  Let $a_0=c$. At stage $i < \lambda$, given the object $a_i$, do the following:
  \begin{itemize}
  \item If $\gamma_i > i$, let $a_{i+1} = a_i$ and let $f_{i,i+1} : a_i \to a_{i+1}$ be the identity morphism.
  \item If $\gamma_i \leq i$, find some $f \in X_{\delta_i}^{a_{\gamma_i}}$ and let $f_{i,i+1} = f$ and $a_{i+1} = \cod(f)$. %
  \end{itemize}
  For each $j < i$, let $f_{j,i+1} : a_j \to a_{i+1}$ be $f_{i,i+1}\circ f_{j,i}$.

  For limit $i$, if $a_j$ is defined for all $j < i$ and $f_{j,k}$ is defined for all $j \leq k < i$, then let $a_i$ be the colimit of the $i$-indexed diagram given by $(a_j : j < i)$ and $(f_{j,k} : j\leq k < i)$. Let $f_{j,i} : a_j \to a_i$ be the corresponding canonical maps.

  Finally, let $F(i) = a_i$ for $i < \lambda$. For any $i \leq j < \lambda$, let $F(i \to j) = f_{i,j}$. This is a sequential-colimit-preserving functor by construction (since $F(i)$ is a colimit for each limit $i < \lambda$). We have that for any $\alpha < \lambda$ and $X \in Q_{F(\alpha)}$, there is a $\beta < \lambda$ such that $F(\beta) \in Q_{F(\alpha)}$ by our choice of the enumeration $((\gamma_i,\delta_i) : i < \lambda)$.
\end{proof}

\begin{defn}\label{defn:density-stuff}
  Given a $(k,1,1)$-weave model $(M,W,L,B,\prec,<,\eval,|\cdot|)$ and $\sigma \in W_{<L}$, a set $X \subseteq W_{\top}$ is \emph{dense above $\sigma$} if for every $\tau \in W_{<L}$ with $\sigma \prec \tau$, there is a $\gamma \in X$ with $\tau \prec \gamma$.

  A set $X \subseteq W_{\top}$ is \emph{somewhere dense} if it is dense above some $\sigma \in W_{<L}$.
\end{defn}

Note that $X$ being dense above $\sigma$ and being somewhere dense are both first-order definable in the structure $(M,W,L,\dots, X)$.

\begin{defn}
  An \emph{augmented $(k,1,1)$-weave model for $\varphi(x,y)$ (with index set $I$)} is a three-sorted structure $(M,W,L,B,\prec,<,\eval,|\cdot|,(P_i: i \in I))$ such that
  \begin{itemize}
  \item $(M,W,L,B,\prec,<,\eval,|\cdot|)$ is an unbounded $(k,1,1)$-weave model for $\varphi(x,y)$,
  \item for each $i \in I$, $P_i$ is a unary predicate selecting out subsets of $W_{\top}$, and
  \item for every finite $I_0 \subseteq I$, $\bigcap_{i \in I_0}P_i$ is somewhere dense.
  \end{itemize}
\end{defn}

Note that for a fixed $I$, the class of augmented $(k,1,1)$-weave models for $\varphi(x,y)$ with index set $I$ is axiomatizable in first-order logic.

We will often abbreviate $(M,W,L,B,\prec,<,\eval,|\cdot|,(P_i: i \in I))$ as $(M,W,L,(P_i: i \in I))$

\begin{defn}\label{defn:Weave-cat}
  Given augmented $(k,1,1)$-weave models $N = (M,W,L,B,\prec,<,\eval,|\cdot|,(P_i: i \in I))$ and $N' = (M',W',L',B',\prec',<',\eval',|\cdot|',(P_j: j \in J))$ a \emph{morphism from $N$ to $M$} is a pair $(f_0,f_1)$ where $f_1$ is an injection from $I$ into $J$ and $f_0$ is an elementary embedding of $(M,W,L,B,\prec,<,\eval,|\cdot|,(P_i: i \in I))$ into $(M',W',L',B',\prec',<',\eval',|\cdot|',(P_{g(i)}: i \in I))$. We will typically write $(f_0,f_1)$ as $f$. Composition of morphisms is componentwise composition: $f\circ g = (f_0 \circ g_0 ,f_1 \circ g_1)$. 

  We write $\Weav(T,k,\varphi)$ for the category of augmented $(k,1,1)$-weave models for $\varphi(x,y)$. Given a cardinal $\lambda$, let $\Weav_{\lambda}^0(T,k,\varphi)$ be the full subcategory of $\Weav(T,k,\varphi)$ consisting of augmented $(k,1,1)$-weave models $(M,W,L,(P_i: i \in I))$ with $|M|,|W|,|L|,|I| \leq \lambda$. In order to make this a small category, let $\Weav_\lambda(T,k,\varphi)$ be some small full subcategory of $\Weav^0_\lambda(T,k,\varphi)$ such that the inclusion functor of $\Weav_\lambda(T,k,\varphi)$ into $\Weav^0_\lambda(T,k,\varphi)$ is essentially surjective.\footnote{For a canonical choice, we can take $\Weav_\lambda(T,k,\varphi)$ to be the intersection of $\Weav^0_\lambda(T,k,\varphi)$ with the first level of the cumulative hierarchy $V_\alpha$ such that the inclusion functor is essentially surjective (i.e., the first level at which $\Weav^0_\lambda(T,k,\varphi) \cap V_\alpha$ contains every isomorphism type of $\Weav^0_\lambda(T,k,\varphi)$). Such an $\alpha$ always exists.}

  When $T$, $k$, and $\varphi$ are clear from context, we will write $\Weav$ and $\Weav_\lambda$ instead of $\Weav(T,k,\varphi)$ and $\Weav_\lambda(T,k,\varphi)$.
\end{defn}

It is straightforward to verify that $\Weav$ has arbitrary sequential colimits and $\Weav_\lambda$ has $\lambda$-sequential colimits.

For the remainder of this section, fix a theory $T$, $k< \omega$, and formula $\varphi(x,y)$ such that $T$ has $(k,1,1)$-weaves for $\varphi(x,y)$ of depth $\omega$. Fix also an infinite cardinal $\lambda \geq |T|$.

Given a formula $\psi = \psi(x,\cbar)$ in the language of $N = (M,W,L,(P_i: i \in I))$ with parameters $\cbar$ from $N$ and given a morphism $f: N \to N' = (M',W',L',(P_j: j \in J))$, let $\psi^{f}$ be the formula $\psi'(x,f_0(\cbar))$, where $\psi'$ is $\psi$ with each instance of $P_i$ replaced with $P_{f_1(i)}$. Finally, let an \emph{$N$-formula} be a formula in the language of $N$ with parameters from $N$.

\begin{lem}\label{lem:various-generic-sets}
  For any $(k,1,1)$-weave model $N = (M,W,L,(P_i: i \in I))$ in $\Weav_\lambda$, the following sets of morphisms are generic above $N$ in the category $\Weav_\lambda$ (where $N'$ is $(M',W',L',(P_j: i \in I'))$).
  \begin{enumerate}
  \item\label{generic-below} The set of morphisms $f : N \to N'$ such that for some $\sigma \in W'_{<L'}$ and $j \in I'$,
    \begin{itemize}
    \item $P_{g(i)}$ is dense above $\sigma$ for every $i \in I$,
    \item $N' \models P_j \subseteq P_{g(i)}$ for every $i \in I$, and
    \item every $\tau$ in $P_j$ extends $\sigma \concat (1,1)$.
    \end{itemize}
  \item\label{generic-force-heir-coheir} For any $N$-formula $\psi(x,\ybar)$ with $x$ a variable of sort $W$, the set of morphisms $f : N \to N'$ such that for some $j \in I'$, either
    \begin{itemize}
    \item there is a $\cbar \in N'^{\ybar}$ such that $P_j \subseteq \psi^{f,g}(N',\cbar)$ or
    \item for any morphism $f' : N' \to N''$ in $\Weav$ and any $\cbar \in N''$, $P_{g'(j)} \cap \psi^{f'\circ f}(N'',\cbar)$ is nowhere dense.
    \end{itemize}
  \end{enumerate}
\end{lem}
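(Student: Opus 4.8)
The plan is to verify genericity for each of the two families of morphisms separately, using in each case the hypothesis that $T$ has $(k,1,1)$-weaves for $\varphi(x,y)$ of depth $\omega$ (so that $\Weav_\lambda$ is nonempty and, via \cref{prop:finite-depth-to-depth-omega} and \cref{lem:weave-models-wrok}, the relevant first-order constructions can actually be carried out), together with the fact that $\Weav_\lambda$ has $\lambda$-sequential colimits. Recall that to show a set $X$ of morphisms out of objects of $\Weav_\lambda$ is generic above $N$, we must show: for every morphism $f \colon N \to N'$ in $\Weav_\lambda$, there is a further morphism $g \colon N' \to N''$ with $g \circ f \in X$. So in each case we fix an arbitrary $f \colon N \to N'$ and produce the required extension of $N'$.

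For (\ref{generic-below}): the key move is the same splitting trick used in \cref{prop:weave-iff-strong-weave} and in the proof of \cref{thm:failure-of-Kim-bi-invariant-to-weaves}, namely prepending coordinates of the form $(1,1)$ (or $(1,i)$) that relativize the weave into a sub-weave. Concretely, in $N'$ I would pass to an elementary extension $N''$ that is sufficiently saturated, pick some $\sigma \in W''_{<L''}$ of the form ``the bottom of a fresh branch'' — more precisely, use saturation to find $\sigma \in W''_{<L''}$ extending a $(1,1)$-step below the old root region so that the cone above $\sigma\concat(1,1)$ is disjoint from none of the previously-dense sets, and so that each old $P_{g(i)}$ remains dense above $\sigma$ (this is automatic since density above a point is first-order and we can choose $\sigma$ below everything, but we must check the somewhere-dense requirement on finite intersections survives). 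Then define a new index $j$ and interpret $P_j$ as (a subset of) $\bigcap_{i \in I_0} P_{g(i)}$ restricted to the cone above $\sigma\concat(1,1)$; the augmented-model axioms — in particular that every finite intersection of the $P$'s is somewhere dense — are preserved because we have literally arranged $P_j$ to witness somewhere-density inside that cone, and the old finite intersections are handled by the old witnesses, which persist under elementary embedding. Adding the symbol $P_j$ to the language and checking that the result is still an augmented $(k,1,1)$-weave model is then routine first-order bookkeeping.

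For (\ref{generic-force-heir-coheir}): this is a dichotomy argument. Fix $f \colon N \to N'$ and the $N$-formula $\psi(x,\ybar)$; transport it to the $N'$-formula $\psi^f(x,\ybar)$. There are two cases. \emph{Case 1:} there is some index $i\in I'$ and some $\cbar \in N'$ such that $P_i \cap \psi^f(N',\cbar)$ is somewhere dense in $N'$. Then pass to an elementary extension (or just add a new index) and let $P_j$ name a subset of $P_i \cap \psi^f(N',\cbar)$ that is dense above the relevant $\sigma$; this $P_j$ lies inside $\psi^{f',\cdot}(N'',\cbar)$ for the composed map, putting us in the first bullet, and the somewhere-dense-intersection axioms are maintained exactly as in case (\ref{generic-below}). \emph{Case 2:} no such $i,\cbar$ exist, i.e.\ for every $i\in I'$ and $\cbar\in N'$, $P_i\cap\psi^f(N',\cbar)$ is nowhere dense. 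Here the point is that nowhere-density of $P_i \cap \psi^f(N',\cbar)$ is expressible by a first-order scheme in the language with the $P$'s, so it is preserved under all further elementary embeddings $f'\colon N'\to N''$; combined with the fact that parameters $\cbar$ in any $N''$ realize (over $N'$) only types already present, one gets that $P_{g'(i)}\cap\psi^{f'\circ f}(N'',\cbar)$ is nowhere dense for \emph{every} further extension and every $\cbar\in N''$ — so taking $g$ to be the identity on $N'$ (with the identity on indices) already puts $g\circ f$ in the second bullet, with $j$ being (the image of) any index, e.g.\ a newly added one equal to all of $W''_\top$ if one insists on producing a fresh $j$.

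The main obstacle I anticipate is the preservation of the ``every finite intersection of the $P_i$'s is somewhere dense'' axiom when we add the new predicate $P_j$: we must choose $P_j$ small enough to sit inside the prescribed intersection (or inside $\psi$) and yet large enough to be dense above some $\sigma$, and we must do this inside a model we only control up to elementarity, so saturation (or a preliminary elementary extension) is essential, and one has to be careful that the $\sigma$ chosen for $P_j$'s density does not conflict with the density-above-$\sigma$ clauses for the old $P_{g(i)}$ in clause (\ref{generic-below}). This is handled by choosing $\sigma$ low enough in the tree that every old $P_{g(i)}$, being dense, already has elements extending $\sigma$, and then thinning $P_j$ within the cone above $\sigma\concat(1,1)$; the verification that this is consistent is a compactness argument internal to the first-order theory of augmented weave models, using that all the relevant requirements are finitary.
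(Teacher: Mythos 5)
Your overall strategy is right — fix an arbitrary morphism into $N^\ast \in \Weav_\lambda$, pass to a saturated elementary extension, carve out a new index, and come back down via L\"owenheim--Skolem — but both parts have real gaps.

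In (\ref{generic-below}), the step you gloss over is the crucial one: before you can pick $\sigma$, you need to use $\aleph_1$-saturation in the (auxiliary, four-sorted) structure to produce a \emph{single} new predicate value $P_j$ with $P_j \subseteq P_{g(i)}$ for \emph{all} $i \in I$ simultaneously (this is where the finite-intersection-somewhere-dense axiom is used, to make the relevant type consistent). Only then do you take $\sigma$ to be a witness for $P_j$ being somewhere dense; density of each $P_{g(i)}$ above that same $\sigma$ comes for free from the containment. Your proposal reverses this order (``pick $\sigma$ first, then define $P_j$''), and the fix you offer — ``choose $\sigma$ below everything'' — does not parse, since $\varnothing$ is already the root. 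You also write $P_j$ as a subset of $\bigcap_{i \in I_0} P_{g(i)}$ for a finite $I_0$; that is not enough. The lemma requires $P_j \subseteq P_{g(i)}$ for every $i \in I$, and more to the point the augmented-weave-model axiom must hold for \emph{mixed} finite intersections $P_j \cap P_{g(i_1)} \cap \cdots$, which your ``old witnesses persist'' remark does not address — it only covers intersections of the old predicates among themselves. The containment $P_j \subseteq \bigcap_i P_{g(i)}$ is exactly what collapses every such mixed intersection to $P_j$, which you arranged to be dense above $\sigma \concat (1,1)$.

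In (\ref{generic-force-heir-coheir}), your dichotomy is taken in the wrong place. You split on whether $N'$ \emph{itself} contains some $i \in I'$, $\cbar \in N'$ with $P_i \cap \psi^f(N',\cbar)$ somewhere dense. The statement you need to verify in the second bullet quantifies over \emph{all} further morphisms $f' : N' \to N''$ and \emph{all} $\cbar \in N''$, and your claimed persistence argument does not go through: for a fixed $\cbar$, nowhere-density is first-order and hence preserved, but a fresh $\cbar$ in $N''$ can make $P_i \cap \psi(\cdot,\cbar)$ somewhere dense even if no $\cbar \in N'$ does (the type ``$P_i \cap \psi(\cdot,\ybar)$ is somewhere dense'' can easily be consistent over $N'$ even when unrealized in $N'$). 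Your closing suggestion of taking $j$ to be ``any index, e.g.\ one equal to all of $W''_{\top}$'' makes this worse, since then $P_j \cap \psi(\cdot,\cbar)$ is just $\psi(\cdot,\cbar) \cap W_\top$. The correct dichotomy is over all further extensions of $N^\ast$: either \emph{some} morphism $f' : N^\ast \to N''$, some $\cbar \in N''$, and some $j \in I''$ give $P_j \subseteq \psi^{f' \circ f^\ast}(N'',\cbar)$, in which case you restrict the index set and L\"owenheim--Skolem down to land in the first bullet; or no such extension exists at all, in which case you run the construction of part (\ref{generic-below}) to get a fresh index $\ell$ with $P_\ell$ contained in every old $P_i$, and then argue \emph{by contradiction}: if some later $f^\dagger$, $\cbar$, made $P_{f_1^\dagger(\ell)} \cap \psi(\cdot,\cbar)$ somewhere dense, you could adjoin it as a new predicate $P_r$ (the containment $P_r \subseteq P_{f_1^\dagger(\ell)} \subseteq$ everything else keeps the axioms satisfied), producing exactly the kind of extension assumed not to exist. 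This robust, extension-quantified dichotomy is the content your Case~2 is missing.
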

\begin{proof}
  For (\ref{generic-below}), fix a morphism $f^\ast : N \to N^\ast$ in $\Weav_\lambda$. Think of $N^\ast=(M^\ast,W^\ast,L^\ast,(P_i : i \in I^\ast))$ as a four-sorted structure with $I^\ast$ the fourth sort and $P$ coded as a binary relation. By the finite intersection condition on $P_i$, we can find a sufficiently saturated elementary extension $N^{\ast\ast} = (M^{\ast\ast},W^{\ast\ast},L^{\ast\ast},(P_i : i \in I^{\ast\ast}))$ (which is possibly larger than $\lambda$) and a $j \in I^{\ast\ast}$ such that $N^{\ast\ast} \models P_{j} \subseteq P_i$ for every $i \in I^\ast$. Since $P_j$ is somewhere dense, we can find a $\sigma \in W^{\ast\ast}_{<L^{\ast\ast}}$ such that $P_j$ is dense above $\sigma$. Fix an index element $\ell$ not in $I^{\ast\ast}$. Let $P_\ell$ be the set of elements of $P_j$ extending $\sigma \concat (1,1)$. Now consider the augmented $(k,1,1)$-weave model $N^{\dagger} = (M^{\ast\ast},W^{\ast\ast},L^{\ast\ast},(P_i : i \in I^{\ast}\cup\{\ell\}))$. This has a language of size at most $\lambda$, so by downward L\"{o}wenheim-Skolem, we can find an elementary submodel $N'$ of $N^\dagger$ containing $N^\ast$. Let $f' : N^\ast \to N'$ be the inclusion morphism. Then we now have that $f'\circ f^\ast$ is in the required set of morphisms. Since we can do this for any such $f^\ast : N \to N^\ast$, the set is dense above $N^\ast$.

  For (\ref{generic-force-heir-coheir}), fix a formula $\psi(x,\ybar)$ and a morphism $f^\ast : N \to N^\ast = (M^\ast,W^\ast,L^\ast,(P_i : i \in I^\ast))$ in $\Weav_\lambda$. If there exists a morphism $f' : N^\ast \to N'' = (M'',W'',L'',(P_i: i \in I''))$ in $\Weav$, $\cbar \in N''$, and $j \in I''$ such that $N'' \models P_j \subseteq \psi^{f'\circ f^\ast}(N'',\cbar)$, let $N'$ be an elementary substructure of $(M'',W'',L'',(P_i : i \in f_1'``[I^\ast] \cup \{j\})$ containing $\cbar$. Then $f'\circ f^\ast : N \to N'$ is the required morphism.

  Otherwise, if no such extension exists, find a morphism $f' : N^\ast \to N'$ as in the proof of (\ref{generic-below}). We need to argue that the new index $\ell$ is the required $j$ in the statement of the lemma. (The relevant fact is that $N' \models P_\ell \subseteq P_{f_1'(i)}$ for every $i \in I^\ast$.) Fix a morphism $f^\dagger : N' \to N^\dagger = (M^\dagger,W^\dagger,L^\dagger,(P_i : i \in I^\dagger))$. Assume for the sake of contradiction that $P_{f_1^\dagger(\ell)} \cap \psi^{f^\dagger \circ f' \circ f^\ast}(N^\dagger,\cbar)$ is somewhere dense for some $\cbar \in N^\dagger$. Fix some $\sigma \in W^{\dagger}_{<L^\dagger}$ such that $P_{f_1^\dagger(\ell)} \cap \psi^{f^\dagger \circ f' \circ f^\ast}(N^\dagger,\cbar)$ is dense above $\sigma$. Let $r$ be an index not in $I^\dagger$ and let $P_r = P_{f_1^\dagger(\ell)} \cap \psi^{f^\dagger \circ f' \circ f^\ast}(N^\dagger,\cbar)$. Consider the augmented $(k,1,1)$-weave model $(M^\dagger,W^\dagger,L^\dagger, (P_i : i \in f_1^\dagger ``[I'] \cup \{r\}))$. This now satisfies the extension condition, but we assumed that no such extension exists, which is a contradiction. Therefore $P_{f_1^\dagger(\ell)} \cap \psi^{f^\dagger \circ f' \circ f^\ast}(N^\dagger,\cbar)$ must be nowhere dense, as required.
\end{proof}

  Note that for each $N \in \Weav_\lambda$, there are at most $\lambda$ generic sets listed in (\ref{generic-below}) and (\ref{generic-force-heir-coheir}) in \cref{lem:various-generic-sets}.

The following lemma is essentially the same as \cite[Lem.~1.4]{NCTP}, but enough details of the formalism are different that we should state the result precisely and prove it again. We will take the opportunity to make it slightly more general as well.

Say that a filter $\Fc$ on a topological space $(X,\tau)$ is \emph{everywhere somewhere dense} if every $A \in \Fc$ is somewhere dense (i.e., satisfies that there is a non-empty open set $U \subseteq X$ such that $A$ is dense in $U$).

\begin{lem}\label{lem:everything-everywhere}
  Any everywhere somewhere dense filter $\Fc$ on a topological space $(X,\tau)$ can be extended to an everywhere somewhere dense ultrafilter $\Uc$.
\end{lem}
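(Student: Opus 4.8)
The plan is a routine Zorn's lemma argument; the only substantive point is that the class of nowhere dense sets is closed under finite unions. First I would let $\Pb$ be the poset of all everywhere somewhere dense filters on $(X,\tau)$ extending $\Fc$, ordered by inclusion. This poset is nonempty since $\Fc \in \Pb$, and the union of a chain in $\Pb$ is again a filter extending $\Fc$ each of whose members belongs to some filter in the chain, hence is somewhere dense; so the union again lies in $\Pb$. By Zorn's lemma $\Pb$ has a maximal element $\Uc$. It remains to show $\Uc$ is an ultrafilter.

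Suppose not, and fix $A \subseteq X$ with $A \notin \Uc$ and $X \setminus A \notin \Uc$. Call $Y \subseteq X$ \emph{$\Uc$-large} if $Y \cap B$ is somewhere dense for every $B \in \Uc$. The key dichotomy is that at least one of $A$, $X\setminus A$ is $\Uc$-large: if neither is, pick $B_0,B_1 \in \Uc$ with $A \cap B_0$ and $(X\setminus A)\cap B_1$ both nowhere dense, and set $B = B_0 \cap B_1 \in \Uc$; then $B = (A\cap B)\cup((X\setminus A)\cap B)$ is contained in the union of the two nowhere dense sets $A\cap B_0$ and $(X\setminus A)\cap B_1$, hence is nowhere dense, contradicting that $\Uc$ is everywhere somewhere dense. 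Here I use that ``somewhere dense'' is precisely the negation of ``nowhere dense'' (i.e.\ $\overline{Y}$ has empty interior) together with the standard fact that a finite union of nowhere dense sets is nowhere dense.

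Finally, fix $Y \in \{A, X\setminus A\}$ that is $\Uc$-large, and let $\Uc' = \{C \subseteq X : Y \cap B \subseteq C \text{ for some } B \in \Uc\}$, the filter generated by $\Uc \cup \{Y\}$. For each $B \in \Uc$ the set $Y \cap B$ is somewhere dense and in particular nonempty (a set dense in a nonempty open set is nonempty), so $\Uc'$ is a proper filter; every member of $\Uc'$ contains a somewhere dense set and is therefore somewhere dense (supersets of somewhere dense sets are somewhere dense); and $\Uc' \supsetneq \Uc$ because $Y \in \Uc' \setminus \Uc$. This contradicts the maximality of $\Uc$, so $\Uc$ is the required everywhere somewhere dense ultrafilter extending $\Fc$. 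The only place any real content enters is the dichotomy step, which is where finite-union-stability of nowhere dense sets is used; the degenerate case $X = \varnothing$ is vacuous, since then no set is somewhere dense and there are no everywhere somewhere dense filters at all.
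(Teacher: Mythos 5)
Your proof is correct and in essence the same as the paper's: both hinge on the dichotomy that for any $A\subseteq X$, at least one of $\Fc\cup\{A\}$, $\Fc\cup\{X\setminus A\}$ generates an everywhere somewhere dense filter, followed by an appeal to Zorn's lemma. The only differences are organizational and cosmetic: you run Zorn up front and derive a contradiction from maximality, whereas the paper proves the one-step extension property first and invokes Zorn at the end; and you cite the standard fact that nowhere dense sets are closed under finite unions, whereas the paper re-derives exactly that fact inline by passing to nested nonempty open sets $U\supseteq V\supseteq W$ (the paper's argument is precisely the proof of the finite-union fact unwound in this setting). Your appeal to the ideal property makes the dichotomy step a one-liner, which is arguably cleaner; the paper's version is self-contained at the cost of a few extra lines. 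Both are fine.
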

\begin{proof}
  Fix an everywhere somewhere dense filter $\Fc$ and a set $A \subseteq X$. We need to show that either $\Fc \cup \{A\}$ generates an everywhere somewhere dense filter or $\Fc \cup \{X \setminus A\}$ generates an everywhere somewhere dense filter.

  If $\Fc \cup \{X \setminus A\}$ generates an everywhere somewhere dense filter, then we are done, so assume that $\Fc \cup \{X \setminus A\}$ does not generate an everywhere somewhere dense filter. Assume for the sake of contradiction that $\Fc \cup \{A\}$ does not generate an everywhere somewhere dense filter. This implies that we can find $B,C \in \Fc$ such that $B \cap (X \setminus A)$ is nowhere dense and $C \cap A$ is nowhere dense. We may assume that $B = C$. Since $B \in \Fc$, there is a non-empty open set $U$ such that $B$ is dense in $U$. Since $B \cap (X \setminus A)$ is not dense in $U$, there is a non-empty open subset $V \subseteq U$ such that $B \cap (X \setminus A) \cap V$ is empty. Since $B \cap A$ is not dense in $V$, there is a non-empty open subset $W \subseteq V$ such that $B \cap A \cap V$ is empty. Together these imply that $B \cap W$ is empty, but this contradicts the fact that $B$ is dense in $U$ and therefore dense in $W$. Therefore it must be the case that $\Fc \cup \{A\}$ generates an everywhere somewhere dense filter.

  Since we can do this for any set $A \subseteq X$, we have by Zorn's lemma that we can extend $\Fc$ to an everywhere somewhere dense ultrafilter.
\end{proof}

\cite[Lem.~1.4]{NCTP} is the specific case of \cref{lem:everything-everywhere} applied to $2^{<\omega}$ with the topology generated by sets of the form $\{\tau \in 2^{<\omega} : \tau \succeq \sigma\}$ for $\sigma \in 2^{<\omega}$. In this paper, we will be applying \cref{lem:everything-everywhere} to $W_{\top}$ with the topology generated by sets of the form $\{\alpha \in W_{\top} : \alpha \succ \sigma\}$ for $\sigma \in W_{< L}$. Note that our previous use of the term dense is compatible with this topological interpretation. Specifically, $A \subseteq W_{\top}$ is dense above $\sigma$ if and only if it is topologically dense in the set $\{\alpha \in W_{\top} : \alpha \succ \sigma\}$ for $\sigma \in W_{< L}$ and $A$ is somewhere dense in the sense of \cref{defn:density-stuff} if and only if it is somewhere dense in the standard topological sense.

\begin{figure}
  \centering
  \begin{tikzpicture}[scale=0.75]
    \draw[dotted,line width=1.1pt] (0,0) -- (0,16) -- (16,16) -- (16,0) -- cycle;
    \draw[dotted,line width=1.1pt] (0,8) -- (16,8);
    \draw[dotted,line width=1.1pt] (8,0) -- (8,16);
    \foreach \i in {0,1} {
      \foreach \j in {0,1} {
        \begin{scope}[shift={(8*\i,8*\j)}]
          \draw[dotted,line width=1.1pt] (0,4) -- (8,4);
          \draw[dotted,line width=1.1pt] (4,0) -- (4,8);
          \foreach \ii in {0,1} {
            \foreach \jj in {0,1} {
              \begin{scope}[shift={(4*\ii,4*\jj)}]
                \draw[dotted,line width=1pt] (0,2) -- (4,2);
                \draw[dotted,line width=1pt] (2,0) -- (2,4);
                \foreach \iii in {0,1} {
                  \foreach \jjj in {0,1} {
                    \begin{scope}[shift={(2*\iii,2*\jjj)}]
                      \draw[dotted,line width=0.5pt] (0,1) -- (2,1);
                      \draw[dotted,line width=0.5pt] (1,0) -- (1,2);
                      \foreach \iiii in {0,1} {
                        \foreach \jjjj in {0,1} {
                          \begin{scope}[shift={(\iiii,\jjjj)}]
                            \draw[dotted,line width=0.25pt,opacity=0.75] (0,0.5) -- (1,0.5);
                            \draw[dotted,line width=0.25pt,opacity=0.75] (0.5,0) -- (0.5,1);
                            \foreach \iiiii in {0,1} {
                              \foreach \jjjjj in {0,1} {
                                \begin{scope}[shift={(0.5*\iiiii,0.5*\jjjjj)}]
                                  \draw[dotted,line width=0.1pt,opacity=0.75] (0,0.25) -- (0.5,0.25);
                                  \draw[dotted,line width=0.1pt,opacity=0.75] (0.25,0) -- (0.25,0.5);
                                \end{scope}
                              }
                            }
                          \end{scope}
                        }
                      }
                    \end{scope}
                  }
                }
              \end{scope}
            }
          }
        \end{scope}
      }
    }

    \newcommand\URbox[4]{
      \begin{scope}[scale={#1},shift={#2},transform shape]
        \draw[line width=#3] (0,2) -- (2,2) -- (2,1) -- (0,1) -- cycle;
        \draw[line width=#3] (2,0) -- (2,2) -- (1,2) -- (1,0) -- cycle;
        \node[scale=1.5] at (1.5,0.5) {$U_{#4}$};
        \node[scale=1.5] at (0.5,1.5) {$R_{#4}$};
      \end{scope}
    }
    \URbox{8}{(0,0)}{3pt}{0}
    \URbox{2}{(4,6)}{2pt}{1}
    \URbox{1}{(10,14)}{1pt}{2}
    \URbox{0.25}{(46,60)}{0.5pt}{3}
    \URbox{0.25/4}{(46*4 + 4,60*4 + 6)}{0.25pt}{4}
  \end{tikzpicture}
  \caption{The sets $U_j$ and $R_j$ in the proof of \cref{prop:get-heir-coheir-heir-coheir-from-weave}. Any sequence $(a_j : j < \cf(\lambda))$ with $a_j \in U_j$ for each $j < \cf(\lambda)$ is an up-$1$-comb, and any sequence $(b_j : j < \cf(\lambda))$ with $b_j \in R_j$ for each $j < \cf(\lambda)$ is a right-$1$-comb.}

  \label{fig:U-and-R}
\end{figure}

\begin{prop}\label{prop:get-heir-coheir-heir-coheir-from-weave}
  For any first-order theory $T$, formula $\varphi(x,y)$, and $k < \omega$, if $T$ has a $(k,1,1)$-weave for $\varphi(x,y)$ of depth $\omega$, then for any cardinal $\lambda \geq |T|$, there is a model $M$ with $|M| = \lambda$, a parameter $b$, and $M$-heir-coheirs $p(y),q(y) \supset \tp(b/M)$ such that $\varphi(x,b)$ $k$-divides along $p$ but does not divide along $q$.
\end{prop}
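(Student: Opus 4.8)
The plan is to run the ``forcing plus compactness'' argument outlined at the start of this section, applying \cref{prop:Baire-category-category} to the category $\Weav_\lambda = \Weav_\lambda(T,k,\varphi)$. Since $T$ has a $(k,1,1)$-weave for $\varphi$ of depth $\omega$, the weave structure associated to it is an unbounded $(k,1,1)$-weave model for $\varphi$; applying (upward and downward) L\"owenheim--Skolem---legitimate since the class of unbounded $(k,1,1)$-weave models for $\varphi$ is first-order axiomatizable---we may assume it has cardinality $\lambda$, and equipping it with the single index $i_0$ with $P_{i_0} = W_\top$ (so that the finite-intersection condition holds, $W_\top$ being dense above the root) gives an object $N_0 \in \Weav_\lambda$. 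Now apply \cref{prop:Baire-category-category} with $\Cc = \Weav_\lambda$ and, for each object $N'$, the family of (at most $\lambda$) generic sets above $N'$ listed in \cref{lem:various-generic-sets}. This produces a sequential-colimit-preserving functor $F : \lambda \to \Weav_\lambda$ with $F(0) = N_0$ meeting all of these generic sets cofinally; let $N = (M,W,L,B,\prec,<,\eval,|\cdot|,(P_i : i \in I))$ be its colimit. As a directed colimit of elementary embeddings, $N$ elementarily extends each $F(\alpha)$, so (by the same first-order axiomatizability, together with the fact that ``somewhere dense'' is preserved under elementary extension) $N$ is again an unbounded augmented $(k,1,1)$-weave model for $\varphi$, with $|M| = \lambda$ and $M \models T$. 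We take this $M$ as the model in the statement and fix a monster model $\Mb \succeq M$.

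Next I would build the parameter and the types. Equip $W_\top$ with the topology generated by the cones $\{\alpha \in W_\top : \alpha \succ \sigma\}$ for $\sigma \in W_{<L}$; by the finite-intersection condition, $\{P_i : i \in I\}$ generates an everywhere somewhere dense filter, which by \cref{lem:everything-everywhere} extends to an everywhere somewhere dense ultrafilter $\Uc$ (in particular $\Uc$ contains no nowhere-dense set). Let $b \models \lim_\Uc \tp(B(\sigma)/M)$ in $\Mb$. The cofinal instances of generic set (\ref{generic-below}) produce, along a cofinal sequence of stages, a coherent system of nested cones and, inside them, sets $(U_j, R_j : j < \cf(\lambda))$ of the shape depicted in Figure~\ref{fig:U-and-R}, all lying in $\Uc$, arranged so that any transversal of $(U_j)_j$ enumerates an up-$1$-comb and any transversal of $(R_j)_j$ enumerates a right-$1$-comb. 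Reading off $\Uc$ together with this box system in, respectively, the vertical and the horizontal direction yields two global types $p(y)$ and $q(y)$, each a coheir over $M$ extending $\tp(b/M)$, whose Morley sequences over $M$ thread the $U_j$'s, resp.\ the $R_j$'s; finally, the instances of generic set (\ref{generic-force-heir-coheir}) (applied to $N$-formulas $\psi(x,\ybar)$ with $x$ of sort $W$) supply the density dichotomy needed to upgrade each of $p$ and $q$ from a coheir to an $M$-heir-coheir, by the same back-and-forth as in \cite[Prop.~3.1]{NCTP}.

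To conclude, one reads off the combinatorics from the weave conditions. For a Morley sequence $(b_i : i < \omega)$ of $p$ over $M$ and any $k$ of its terms, the type $p^{\otimes k}\res M$ is an iterated limit concentrating on tuples $(\sigma_0,\dots,\sigma_{k-1})$ with $\{B(\sigma_0),\dots,B(\sigma_{k-1})\}$ a finite up-$1$-comb; by the inconsistency clause of a $(k,1,1)$-weave, $\{\varphi(x,B(\sigma_i)) : i < k\}$ is $k$-inconsistent, so $\neg\exists x\,\bigwedge_{i<k}\varphi(x,y_i) \in p^{\otimes k}\res M$, whence $\{\varphi(x,b_i) : i < \omega\}$ is $k$-inconsistent and $\varphi(x,b)$ $k$-divides along $p$. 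Symmetrically, a Morley sequence of $q$ over $M$ corresponds to tuples forming finite right-$1$-combs, which are consistent by the definition of a weave, so every finite subfamily of $\{\varphi(x,b_i) : i < \omega\}$ is consistent and, by compactness, $\varphi(x,b)$ does not divide along $q$.

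The main obstacle is the middle step: arranging a \emph{single} generic construction whose associated ultrafilter $\Uc$ and box system simultaneously (i) restrict to $M$ so that one fixed $b$ realizes the relevant average type, (ii) yield limits in \emph{both} coordinate directions that are heir-coheirs over $M$ (not merely coheirs), and (iii) make the two Morley sequences an up-$1$-comb and a right-$1$-comb respectively (so in particular $p$ and $q$ genuinely differ). This is exactly why the two families of generic sets of \cref{lem:various-generic-sets} must be interleaved: (\ref{generic-below}) keeps the running ``diagonal'' of the partial generic inside a $(1,1)$-cone, so the two extracted combs really are combs, while (\ref{generic-force-heir-coheir}) decides each $W$-formula so that finite satisfiability in $M$ holds on both sides. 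Everything else---checking that the colimit remains a weave model, that the box transversals are combs (via \cref{lem:foo-truncate}), and the final compactness conclusions---is routine, and the bookkeeping closely follows \cite[Prop.~3.1]{NCTP}.
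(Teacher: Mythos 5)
Your overall architecture matches the paper's: build an augmented weave model by iterating the generic sets of \cref{lem:various-generic-sets} via \cref{prop:Baire-category-category}, extract a filter on $B``[W_\top]$ and a nested system of cones yielding the sets $U_j$, $R_j$, and finish with \cref{lem:everything-everywhere}. However, the middle step as you describe it does not quite work, and the slippage is precisely at the point you flag as the main obstacle.

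First, the claim that the $U_j$'s and $R_j$'s are ``all lying in $\Uc$'' is impossible: $U_j$ and $R_j$ are the cones above $\sigma_j\concat(1,0)$ and $\sigma_j\concat(0,1)$ respectively, so they are disjoint for each $j$ (and in fact $U\cap R=\varnothing$ across all levels, since $\sigma_{j+1}$ extends $\sigma_j\concat(1,1)$). No single ultrafilter can contain both. Relatedly, one cannot ``read off two types from $\Uc$'' --- a single ultrafilter on $B``[W_\top]$ determines a single global coheir. The paper instead works with a \emph{filter} $\Pc$ (generated by the $P_i$'s together with the complements of nowhere-dense sets), shows that $\Pc\cup\{U\}$ and $\Pc\cup\{R\}$ each generate everywhere-somewhere-dense filters (Claim~3), and then invokes \cref{lem:everything-everywhere} \emph{twice} to produce two distinct ultrafilters $\Uc_{\mathtt U}\supseteq\Pc\cup\{U\}$ and $\Uc_{\mathtt R}\supseteq\Pc\cup\{R\}$; $p$ and $q$ are the coheirs generated by these two. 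By committing to a single ultrafilter $\Uc$ before building $p$ and $q$, your construction has already made a choice that forecloses one of the two directions.

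Second, and more substantively, you attribute generic set (\ref{generic-force-heir-coheir}) only to the heir-coheir upgrade, but it is doing double duty: the density dichotomy it enforces is exactly what proves the paper's Claim~1, namely that $\Pc$ already decides every $M_\lambda$-formula and hence generates a complete type over $M_\lambda$. This is the mechanism that guarantees $p\res M=q\res M=\tp(b/M)$ --- without it, there is no reason the $U$-flavored and $R$-flavored ultrafilters extending $\Pc$ restrict to the same type over $M$, and the statement requires a single $b$ served by both $p$ and $q$. Your proposal asserts that $p$ and $q$ both extend $\tp(b/M)$, but supplies no argument for it; the ``reading off in vertical/horizontal directions'' gesture does not fill that gap. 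Once Claim~1 is in place (as a consequence of meeting (\ref{generic-force-heir-coheir}) cofinally), the two-ultrafilter construction automatically yields types with the same restriction, the heir-coheir property follows from the same generic set (Claim~2), and the comb/dividing bookkeeping you describe at the end then goes through as written.
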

\begin{proof}
  For each $N = (M,W,L,(P_i : i \in I)) \in \Weav_\lambda$, let $(\ref{generic-below})_N$ be the set of morphisms in \cref{lem:various-generic-sets} (\ref{generic-below}) for the specific object $N$. For each $N$-formula $\psi(x,\ybar)$, let $(\ref{generic-force-heir-coheir})_{N,\psi}$ be the set of morphisms in \cref{lem:various-generic-sets} (\ref{generic-force-heir-coheir}) for the specific object $N$ and formula $\psi(x,\ybar)$. Let $Q_N = \{(\ref{generic-below})_N\} \cup \{(\ref{generic-force-heir-coheir})_{N,\psi} : \psi(x,\ybar)~\text{an}~N\text{-formula}\}$. 

  By \cref{prop:Baire-category-category} and \cref{lem:various-generic-sets}, we can build a sequential-colimit-preserving functor $F: \lambda \to \Weav_\lambda$ such that for every $\alpha < \lambda$ and $X \in Q_{F(\alpha)}$, there is a $\beta < \lambda$ with $\alpha < \beta$ such that $F(\alpha \to \beta) \in X$. Let $N_\lambda = (M_\lambda,W_\lambda,L_\lambda,(P_i : i \in I_\lambda))$ be the colimit in $\Weav$ of the diagram $F$. For each $\alpha < \lambda$, let $N_\alpha = (M_\alpha,W_\alpha,L_\alpha,(P_i : i \in I_\alpha))$ be the image of $F(\alpha)$ under the canonical morphism of $F(\alpha)$ into $N_\alpha$. Since this family is isomorphic to the image of $F(\alpha)$ (with each $F(\alpha \to \beta)$ taken to the inclusion map of $N_\alpha$ into $N_\beta$), we may assume that for each $\alpha < \lambda$, $F(\alpha)$ is $N_\alpha$ and for each $\alpha < \beta < \lambda$, $F(\alpha \to \beta)$ is the pair $(f,g)$, where $f$ is the inclusion map of $N_\alpha$ into $(M_\beta,W,\beta,L_\beta,(P_i : i \in I_\alpha))$ and $g$ is the inclusion map of $I_\alpha$ into $I_\beta$.

  Let $\Pc$ be the filter on $B``[W_\lambda] \subseteq M_\lambda$ generated by
  \begin{itemize}
  \item sets of the form $\{B(x) : N_\lambda \models P_i(x)\}$ and
  \item sets of the form $\{B(x) : x \in C\}$ for $C \subseteq (W_\lambda)_{\top}$ satisfying that $(W_\lambda)_{\top} \setminus C$ is nowhere dense.
  \end{itemize}
  Note that since $N_\lambda$ is an augmented $(k,1,1)$-weave model, $\Pc$ is everywhere somewhere dense (in the sense of the topology on $B``[W_\lambda]$ induced by sets of the form $\{B(x) : x \in (W_\lambda)_{\top},~x \succ b\}$ for $b \in (W_\lambda)_{<L_\lambda}$). In particular, this implies that $\Pc$ is non-trivial.

\vspace{0.5em}

\noindent \emph{Claim 1.} $\Pc$ generates a complete type over $N_\lambda$.

\begin{claimproof}{}
  Fix an $N_\lambda$-formula $\chi(x)$ with $x$ a variable in the $M_\lambda$ sort. Let $\psi(x) = \chi(B(x))$. Find an $\alpha < \lambda$ such that $\psi$ is an $N_\alpha$-formula. By construction, there is a $\beta < \lambda$ with $\alpha < \beta$ such that the inclusion morphism of $N_\alpha$ into $N_\beta$ is in $(\ref{generic-force-heir-coheir})_{N_\alpha,\psi}$. This implies that for some $j \in I_\beta$, either $N_\beta \models P_j \subseteq \psi(N_\beta)$, implying that $N_\lambda$ satisfies the same and thereby that $\psi(N_\lambda) \in \Pc$ or $P_j(N_\lambda) \cap \psi(N_\lambda)$ is nowhere dense in $(W_\lambda)_{\top}$. In the second case, we get immediately that the complement of $\psi(N_\lambda)$ is in $\Pc$. Since we can do this for any such formula, we have that $\Pc$ generates a complete type over $N_\lambda$.
\end{claimproof}

\vspace{0.5em}

\vspace{0.5em}

  \noindent \emph{Claim 2.} If $\Uc$ is an everywhere somewhere dense ultrafilter extending $\Pc$, then the $M_\lambda$-coheir (in the original language of $T$) generated by $\Uc$ is an $M_\lambda$-heir-coheir.

\begin{claimproof}{}
  Recall that $\Mb$ is the monster model of the theory $T$ (the theory of $M_\lambda$). Let $p(y)$ be the global type generated by $\Uc$. Fix some $M_\lambda$-formula $\psi(y,\zbar)$ and assume that for some $b \in \Mb$, $\psi(y,\bbar) \in p(y)$. We need to show that there is a $\cbar \in M_\lambda$ such that $\psi(y,\cbar) \in p(y)$ as well. Find $\alpha < \lambda$ such that $\psi(y,\zbar)$ is an $N_\alpha$-formula.

  Since $\Uc$ is everywhere somewhere dense, we must have that $\{a \in W_\lambda : \Mb \models \psi(B(a),\bbar)\}$ is somewhere dense. This means that when we met the condition $(\ref{generic-force-heir-coheir})_{N_\alpha,\psi(B(y),\zbar)}$, we must have satisfied the first bullet point, implying that there is a $\cbar \in N_\lambda$ such that $\psi(M_\lambda,\cbar)$ is an element of the filter $\Pc$ (and therefore of $\Uc$ as well). 
\end{claimproof}

\vspace{0.5em}

  Since we included the set $(\ref{generic-below})_N$ in $Q_N$, we can build an increasing cofinal sequence $(\alpha_j : j < \cf(\lambda))$ of ordinals less than $\lambda$, a sequence $(\sigma_j : j < \cf(\lambda))$ of elements of $(W_{\lambda})_{< L_\lambda}$, and a sequence $(i(j) : j < \cf(\lambda))$ of elements of $I_\lambda$ satisfying that for every $j < \cf(\lambda)$,
  \begin{itemize}
  \item $i(j) \in I_{\alpha_{j+1}}$ and $\sigma_j \in W_{\alpha_{j+1}}$,
  \item $P_i$ is dense above $\sigma_j$ for every $i \in I_{\alpha_j}$,
  \item $P_{i(j)} \subseteq P_i$ for every $i \in I_{\alpha_j}$, and
  \item every $\tau$ in $P_{i(j)}$ extends $\sigma_j\concat(1,1)$.
  \end{itemize}
  Let
  \begin{align*}
    U_j &= \left\{ B(x) : x \in (W_\lambda)_{\top},~ \sigma_j \concat (1,0) \prec x \right\},&   U &= \bigcup_{j < \cf(\lambda)}U_j, \\
    R_j &= \left\{ B(x) : x \in (W_\lambda)_{\top},~ \sigma_j \concat (0,1) \prec x \right\},&  R &= \bigcup_{j < \cf(\lambda)}R_j.
  \end{align*}

\vspace{0.5em}

\noindent \emph{Claim 3.} $\Pc \cup \{U\}$ and $\Pc \cup \{R\}$ both generate everywhere somewhere dense filters.

\begin{claimproof}{}
  To show that $\Pc \cup \{U\}$ generates an everywhere somewhere dense filter, it is sufficient to show that for any $C \in \Pc$, $U \cap C$ is somewhere dense. We may assume without loss of generality that $C = B``[P_i(N_\lambda) \setminus D]$ for some $i \in I_\lambda$ and some nowhere dense $D \subseteq (W_\lambda)_{\top}$. Find a $j$ such that $i \in I_{\alpha_j}$. By construction, we now have that $P_i(N_\lambda)$ is dense above $\sigma_j$, which implies that $P_i(N_\lambda)\setminus D$ is dense above $\sigma_j$ as well. This implies that $P_i(N_\lambda)\setminus D$ is dense above $\sigma_j \concat (1,0)$ and so $U \cap C$ is somewhere dense. 

  The proof for $\Pc \cup \{R\}$ is the same.
\end{claimproof}

\vspace{0.5em}

By \cref{lem:everything-everywhere}, we can find everywhere somewhere dense ultrafilters $\Uc_{\mathtt{U}}$ and $\Uc_{\mathtt{R}}$ extending $\Pc \cup \{U\}$ and $\Pc \cup \{R\}$, respectively. Let $p(y)$ be the global $N_\lambda$-coheir generated by $\Uc_{\mathtt{R}}$ and let $q(y)$ be the global $N_\lambda$-coheir generated by $\Uc_{\mathtt{U}}$. By Claim 1, we have that $p \res M_\lambda = q \res M_\lambda$. By Claim 2, the restrictions of $p(y)$ and $q(y)$ to the language of $T$ are $M_\lambda$-heir-coheirs. (This is true in the full language as well, but we will not need this.)

Now we just need to show that $\varphi(x,y)$ $k$-divides along $p(y)$ but does not divide along $q(y)$. This is easiest to see in the full language of $N_\lambda$. 

Let $(b_i)_{i < \omega}$ be a Morley sequence (in the monster model of $\Th(N_\lambda)$) generated by $p(y)$. For any finite $U_0 \subseteq U$, we have that for some $X \in \Uc_{\mathtt{U}}$, $U_0$ is narrowly below $X$. This implies the following statement by induction (on $n$):
\begin{itemize}
\item[$ $] For any $j_0 < j_1 <\dots j_{m-1} < \cf(\lambda)$ and any $a_0,\dots,a_{m-1} \in U$ with $a_\ell \in U_{j_\ell}$ for each $\ell < m$, the set $\{B^{-1}(a_0),\dots,B^{-1}(a_{m-1}), B^{-1}(b_{n-1}),B^{-1}(b_{n-2}),\dots,B^{-1}(b_0)\}$ is an up-$1$-comb, implying in particular that $\{\varphi(x,a_0),\dots,\varphi(x,a_{m-1}),\varphi(x,b_{n-1}),\dots,\varphi(x,b_0)\}$ is $k$-inconsistent.
\end{itemize}
This immediately implies that $\varphi(x,y)$ $k$-divides along $p(y)$.

The argument that $\varphi(x,y)$ does not divide along $q(y)$ is essentially the same.
\end{proof}

One thing to note is that like with the Baire category theorem, an advantage of the more abstract framework given by \cref{prop:Baire-category-category} is that we can easily ensure that the models built in \cref{prop:get-heir-coheir-heir-coheir-from-weave} satisfy other generic conditions without much extra work. For instance, if for a given cardinal $\kappa$, $\lambda$ satisfies that for any $\mu < \lambda$, $2^{|T| + \mu + \kappa} < \lambda$, then we can ensure that the model $M$ built in \cref{prop:get-heir-coheir-heir-coheir-from-weave} is $\kappa^+$-saturated. We can also simultaneously build many different pairs of heir-coheir $(p,q)$ satisfying the conclusion of \cref{prop:get-heir-coheir-heir-coheir-from-weave}, although it's unclear what this might be useful for.

\begin{thm}\label{thm:main-theorem}
  For any complete first-order theory $T$ and $k < \omega$, the following are equivalent.
  \begin{enumerate}
  \item\label{bi-rel-bi-rel} $T$ satisfies \textup{($k$, bi-invariant or semi-reliably invariant, bi-invariant or semi-reliably invariant)}--Kim's lemma.
  \item \label{bi-bi} $T$ satisfies $(k,$ \textup{bi-invariant, bi-invariant}$)$-\KL{}.
  \item \label{hc-hc} $T$ satisfies $(k,$ \textup{heir-coheir, heir-coheir}$)$-\KL{} over models.
  \item \label{weave-omega} $T$ does not have a $(k,1,1)$-weave of depth $\omega$.
  \item \label{strong-weave-omega} $T$ does not have a strong $(k,1,1)$-weave of depth $\omega$.
  \end{enumerate}
\end{thm}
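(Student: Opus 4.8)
The plan is to prove the cycle of implications $(\ref{bi-rel-bi-rel}) \Rightarrow (\ref{bi-bi}) \Rightarrow (\ref{hc-hc}) \Rightarrow (\ref{weave-omega}) \Rightarrow (\ref{bi-rel-bi-rel})$, and to observe that $(\ref{weave-omega}) \Leftrightarrow (\ref{strong-weave-omega})$ is immediate from \cref{prop:weave-iff-strong-weave} applied with $m = n = 1$ (for each formula $\varphi$ separately, then quantifying existentially over $\varphi$).

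The implications $(\ref{bi-rel-bi-rel}) \Rightarrow (\ref{bi-bi})$ and $(\ref{bi-bi}) \Rightarrow (\ref{hc-hc})$ I would obtain from the monotonicity of Kim's lemma, \cref{prop:Kim-monotone}. For the first, the class of $A$-bi-invariant types is contained in the class of types that are bi-invariant or semi-reliably invariant, so monotonicity applies verbatim. For the second, I would first note that every $M$-heir-coheir is $M$-bi-invariant: if $p(x)$ is an $M$-heir-coheir and $b \models p \res M c$, then $c \indu_M b$, and since a type finitely satisfiable in $M$ extends to an $M$-invariant type we get $c \indi_M b$, so $p$ is $M$-bi-invariant (this is the edge from heir-coheir to bi-invariant in Figure~\ref{fig:invariant}). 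Hence the class of pairs $(M,p)$ with $M$ a model and $p$ an $M$-heir-coheir is contained in the class of pairs $(A,p)$ with $A$ an arbitrary small parameter set and $p$ an $A$-bi-invariant type, and \cref{prop:Kim-monotone} yields $(\ref{bi-bi}) \Rightarrow (\ref{hc-hc})$. (It is used here that the parameter set in condition $(\ref{bi-bi})$ is unrestricted, so that the inclusion of pair-classes runs in the needed direction.)

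For the remaining two implications I would argue contrapositively, importing the two main technical results. For $(\ref{hc-hc}) \Rightarrow (\ref{weave-omega})$: if $T$ has a $(k,1,1)$-weave of depth $\omega$, then it has one for some fixed formula $\varphi(x,y)$, and \cref{prop:get-heir-coheir-heir-coheir-from-weave} then produces a model $M$, a parameter $b$, and $M$-heir-coheirs $p(y), q(y) \supset \tp(b/M)$ with $\varphi(x,b)$ $k$-dividing along $p$ but not dividing along $q$ --- exactly a failure of $(k,$ \textup{heir-coheir, heir-coheir}$)$-\KL{} over models. For $(\ref{weave-omega}) \Rightarrow (\ref{bi-rel-bi-rel})$: a failure of \textup{($k$, bi-invariant or semi-reliably invariant, bi-invariant or semi-reliably invariant)}--Kim's lemma produces a $(k,1,1)$-weave of depth $\omega$ by the third bullet of \cref{cor:awkward}, which is itself a consequence of \cref{thm:failure-of-Kim-bi-invariant-to-weaves}.

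All of the genuine content is in the imported results --- \cref{prop:get-heir-coheir-heir-coheir-from-weave} for passing from the combinatorial configuration to a failure of Kim's lemma, and \cref{thm:failure-of-Kim-bi-invariant-to-weaves} for the reverse direction --- so the theorem itself is just a matter of assembling these correctly. The one spot calling for care is the matching of classes in the two monotonicity steps: specifically the heir-coheir $\Rightarrow$ bi-invariant implication, together with the fact that $(\ref{bi-bi})$ quantifies over all invariance bases (indeed all small parameter sets) while $(\ref{hc-hc})$ quantifies only over models, which is what makes the relevant containments of pair-classes hold. Beyond this there is no real obstacle.
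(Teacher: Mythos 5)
Your proposal is correct and follows exactly the same route as the paper: both implications $(\ref{bi-rel-bi-rel})\Rightarrow(\ref{bi-bi})$ and $(\ref{bi-bi})\Rightarrow(\ref{hc-hc})$ via \cref{prop:Kim-monotone}, $(\ref{hc-hc})\Rightarrow(\ref{weave-omega})$ via \cref{prop:get-heir-coheir-heir-coheir-from-weave}, $(\ref{weave-omega})\Rightarrow(\ref{bi-rel-bi-rel})$ via \cref{cor:awkward}, and $(\ref{weave-omega})\Leftrightarrow(\ref{strong-weave-omega})$ via \cref{prop:weave-iff-strong-weave}. Your explicit check that heir-coheirs over models are bi-invariant (making the pair-class containment run the right way for the second monotonicity step) is implicit in the paper but worth spelling out, and you did it correctly.
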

\begin{proof}
  (\ref{bi-rel-bi-rel}) implies (\ref{bi-bi}) and (\ref{bi-bi}) implies (\ref{hc-hc}) by \cref{prop:Kim-monotone}. (\ref{weave-omega}) implies (\ref{bi-rel-bi-rel}) by \cref{cor:awkward}. (\ref{hc-hc}) implies (\ref{weave-omega}) by \cref{prop:get-heir-coheir-heir-coheir-from-weave}. Finally, (\ref{weave-omega}) and (\ref{strong-weave-omega}) are equivalent by \cref{prop:weave-iff-strong-weave}.
\end{proof}

(\ref{bi-rel-bi-rel}) in \cref{thm:main-theorem} is of course somewhat artificial, but it does have the advantage that it is both characterized by a forbidden combinatorial configuration and is non-trivial over arbitrary invariance bases, unlike the characterization of NCTP given in \cite{NCTP}.\footnote{Although it should be noted that \cite[Thm.~1.8, Prop.~2.6]{NCTP} and the fact that coheirs over models are extendibly invariant give a similarly artificial (or perhaps even more artificial) characterization of NCTP: A theory has $k$-CTP if and only if it fails to satisfy ($k$, extendibly invariant, bi-invariant or reliably invariant)--Kim's lemma. Like \cref{thm:main-theorem} (\ref{bi-rel-bi-rel}), this version of Kim's lemma is non-vacuous over any invariance base.\label{NCTP-footnote}}

Given the artificiality of \cref{thm:main-theorem} (\ref{bi-rel-bi-rel}) (and of the characterization of NCTP discussed in Footnote~\ref{NCTP-footnote}), the following question (which is similar in spirit to \cref{quest:bi-invariant-reliably-invariant}) seems reasonable.

\begin{quest}
  Is there a natural class $\Xc$ of invariant types mutually generalizing bi-invariant types and semi-reliably invariant types such that $(k,\Xc,\Xc)$--Kim's lemma is equivalent to the conditions in \cref{thm:main-theorem}? Is there a similar $\Yc$ such that NCTP is equivalent to ($k$, extendibly invariant, $\Yc$)--Kim's lemma?
\end{quest}

Using \cref{thm:main-theorem} we can of now show that Kim-forking with regards to these special invariant types entails Kim-dividing with regards to these special invariant types. This does require the use of reliably invariant types (rather than just semi-reliably invariant types), but these always exist over invariance bases by \cite[Thm.~2.14]{NCTP}.

\begin{cor}
  Fix a theory $T$ that does not have a $(k,1,1)$-weave of depth $\omega$ for any $k < \omega$. Fix also an invariance base $A$ and a formula $\varphi(x,b)$. Suppose that \(\varphi(x,b) \vdash \bigvee_{i < n} \psi_i(x,c_i) \) and for each $i < n$, $\psi_i(x,c_i)$ divides along an $A$-bi-invariant type or a semi-reliably $A$-invariant type. Then $\varphi(x,b)$ divides along a reliably $A$-invariant type.
\end{cor}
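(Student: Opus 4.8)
The plan is to derive this from \cref{thm:main-theorem}, which applies to $T$ for every $k$ since $T$ has no $(k,1,1)$-weave of depth $\omega$ for any $k<\omega$. First I would fix $k_i<\omega$ with $\psi_i(x,c_i)$ $k_i$-dividing along $q_i$ and set $k=\max_{i<n}k_i$, so that each $\psi_i(x,c_i)$ $k$-divides along $q_i$. By \cref{fact:reliable-existence} (this is where we use that $A$ is an invariance base) fix a reliably $A$-invariant type $s$ extending $\tp(bc_0\cdots c_{n-1}/A)$, let $p$ be the restriction of $s$ to the variables of $b$, and let $q_i^\ast$ be the restriction of $s$ to the variables of $c_i$. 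I would then use that the restriction of a reliably (hence semi-reliably) $A$-invariant type to a subtuple of its variables is again semi-reliably $A$-invariant, so that $p$ and each $q_i^\ast$ are semi-reliably $A$-invariant and extend $\tp(b/A)$, resp.\ $\tp(c_i/A)$.

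The next step transfers the dividing of the $\psi_i$'s onto these restrictions: since $\psi_i(x,c_i)$ $k$-divides along $q_i$, which is $A$-bi-invariant or semi-reliably $A$-invariant, and $T$ satisfies \textup{($k$, bi-invariant or semi-reliably invariant, bi-invariant or semi-reliably invariant)}-\KL{} by \cref{thm:main-theorem}, it follows that $\psi_i(x,c_i)$ also divides along the semi-reliably $A$-invariant type $q_i^\ast$. Fix $k_i^\prime<\omega$ with $\psi_i(x,c_i)$ $k_i^\prime$-dividing along $q_i^\ast$ and put $K=n\cdot\max_{i<n}k_i^\prime$.

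Then I would take a Morley sequence $(b^j,c_0^j,\dots,c_{n-1}^j:j<\omega)$ in $s$ over $A$. By construction $(b^j:j<\omega)$ is a Morley sequence in $p$ and, for each $i<n$, $(c_i^j:j<\omega)$ is a Morley sequence in $q_i^\ast$, so $\{\psi_i(x,c_i^j):j<\omega\}$ is $k_i^\prime$-inconsistent. Since the sentence expressing $\varphi(x,b)\vdash\bigvee_{i<n}\psi_i(x,c_i)$ lies in $\tp(bc_0\cdots c_{n-1}/A)$, the same implication holds of each tuple $(b^j,c_0^j,\dots,c_{n-1}^j)$; a common realization of $\{\varphi(x,b^j):j\in J\}$ with $|J|=K$ would, by the pigeonhole principle, satisfy $\psi_{i^\ast}(x,c_{i^\ast}^j)$ for some fixed $i^\ast<n$ and at least $k_{i^\ast}^\prime$ many $j\in J$, contradicting the $k_{i^\ast}^\prime$-inconsistency of $\{\psi_{i^\ast}(x,c_{i^\ast}^j):j<\omega\}$. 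Hence $\{\varphi(x,b^j):j<\omega\}$ is $K$-inconsistent and $\varphi(x,b)$ $K$-divides along $p$.

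Finally, $p$ is semi-reliably $A$-invariant, extends $\tp(b/A)$, and $\varphi(x,b)$ $K$-divides along it, so by \cref{thm:main-theorem} applied with $K$ in place of $k$, $\varphi(x,b)$ divides along every $A$-bi-invariant or semi-reliably $A$-invariant type extending $\tp(b/A)$. By \cref{fact:reliable-existence} there is a reliably $A$-invariant type extending $\tp(b/A)$, and reliable invariance implies semi-reliable invariance, so $\varphi(x,b)$ divides along it, as required. The only step I do not expect to be pure bookkeeping is the claim that restrictions of reliably/semi-reliably $A$-invariant types to subtuples of variables stay semi-reliably $A$-invariant — which is what permits \cref{thm:main-theorem} to be applied to $p$ and the $q_i^\ast$; if this is not already recorded in \cite{NCTP}, I would expect checking it from the definition of (semi-)reliability to be the main point of the proof, everything else being the standard pigeonhole passage from Kim-forking to Kim-dividing.
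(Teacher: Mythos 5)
Your proof is correct and takes essentially the same route as the paper's. The only place where you take a detour is at the end: having shown that $\varphi(x,b)$ $K$-divides along $p = s\res y$, you invoke \cref{thm:main-theorem} a second time and \cref{fact:reliable-existence} again to reach a reliably $A$-invariant type along which $\varphi(x,b)$ divides. The paper shortcuts this by observing that $p = s\res y$ is \emph{itself} reliably $A$-invariant, since ``restrictions of reliably invariant types to subtuples of variables are reliably invariant by definition.'' That is the stronger version of exactly the fact you flag as the one possible gap, and it is indeed immediate from the definition of reliable invariance in \cite{NCTP}, so your weaker semi-reliability claim holds \emph{a fortiori}; with the stronger statement the final application of \cref{thm:main-theorem} becomes superfluous. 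Everything else --- the choice of a reliably $A$-invariant $s \supset \tp(bc_0\cdots c_{n-1}/A)$ via \cref{fact:reliable-existence}, the use of \cref{thm:main-theorem} to transfer dividing of each $\psi_i(x,c_i)$ onto the subtuple restrictions $q_i^\ast$ of $s$, the choice of a Morley sequence in $s$, and the pigeonhole passage from Kim-forking to Kim-dividing --- matches the paper's argument (the paper leaves the pigeonhole implicit as ``the standard argument'').
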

\begin{proof}
  (This proof is essentially identical to the proof of \cite[Cor.~2.16]{NCTP}.) Let $p(y,z_0,\dots,z_{n-1})$ be a reliably $A$-invariant type extending $\tp(bc_0\dots c_{n-1} / A)$ (which exists by \cite[Thm.~2.14]{NCTP}). Let $(d^je^j_i: i < n,~j<\omega)$ be a Morley sequence generated by $p$. Note that $(d^j : j < \omega)$ and $(e_i^j : j < \omega)$ for each $i < n$ are Morley sequences in reliably $A$-invariant types. By \cref{thm:main-theorem} and the fact that reliably $A$-invariant types are semi-reliably $A$-invariant, $\{\psi_i(x,e_i^j) : j < \omega\}$ is inconsistent for each $i < n$. By the standard argument, we have that $\{\bigvee_{i < n} \psi_i(x,e^j_i) : j < \omega\}$ is inconsistent. This implies that $\{\varphi(x,d^j) : j < \omega\}$ is inconsistent, whereby $\varphi(x,b)$ divides along $p \res y$. Therefore $\varphi(x,b)$ divides along a reliably $A$-invariant type, as restrictions of reliably invariant types to subtuples of variables are reliably invariant by definition.
\end{proof}

\section{\texorpdfstring{$(2,1,\omega)$}{(2,1,ω)}-weaves and cographs}
\label{sec:cographs}

In the specific case of the failure of $(2$, bi-invariant or semi-reliably invariant, strongly bi-invariant$)$-\KL{} (and therefore in the case of the failure of ($2$, $m$-strongly bi-invariant, strongly bi-invariant)--Kim's lemma for any positive $m \leq \omega$), we can give a far simpler description of the combinatorial configuration that arises.

\begin{defn}
  Fix two graphs $G_0 = (V_0,E_0)$ and $G_1 = (V_1,E_1)$ with $V_0 \cap V_1 = \varnothing$.
  \begin{itemize}
  \item The \emph{coproduct of $G_0$ and $G_1$} is the graph $(V_0\cup V_1,E_0\cup E_1)$.
  \item The \emph{graph join of $G_0$ and $G_1$} is the graph $(V_0 \cup V_1,E_0 \cup E_1 \cup \{\{a,b\} : a \in V_0,~b\in V_1\})$.
  \end{itemize}
  We also define coproducts and graph joins of graphs with not necessarily disjoint underlying sets in the obvious analogous way. We will denote the coproduct by $G_0 \oplus G_1$ and the graph join by $G_0 \graphjoin G_1$.

  The class of \emph{cographs} is the smallest class containing the singleton graph and closed under coproducts and graph joins.
\end{defn}

Cographs have the following forbidden subgraph characterization.

\begin{fact}[{\cite[Thm.~2]{Corneil_1981}}]\label{fact:cograph}
  The cographs are exactly the finite $P_4$-free graphs (i.e., graphs that do not have $P_4$ as an induced subgraph, where $P_4$ is the four-element path graph: $\bullet - \bullet - \bullet -\bullet$).
\end{fact}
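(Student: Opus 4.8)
The plan is to prove the two inclusions separately. The inclusion ``cograph $\Rightarrow$ $P_4$-free'' I would handle by induction on the construction of the cograph: a singleton has no induced $P_4$; for a coproduct $G_0 \oplus G_1$, any induced $P_4$ would be a connected graph sitting inside a disconnected one, hence lie entirely inside $G_0$ or inside $G_1$, contradicting the inductive hypothesis; and for a graph join $G_0 \graphjoin G_1$ I would pass to complements, using that $\overline{G_0 \graphjoin G_1} = \overline{G_0} \oplus \overline{G_1}$, that $P_4$ is self-complementary, and that a graph has an induced $P_4$ if and only if its complement does, so that the coproduct case applies to $\overline{G_0} \oplus \overline{G_1}$.

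The real content is ``$P_4$-free $\Rightarrow$ cograph,'' which I would deduce from the following lemma: if a graph $H$ has at least two vertices and both $H$ and $\overline{H}$ are connected, then $H$ has an induced $P_4$. To prove the lemma, choose $H$ minimal (as an induced subgraph of the given graph, or just minimal among all such graphs) subject to having $\geq 2$ vertices with $H$ and $\overline{H}$ both connected; checking the cases $|V(H)| \in \{2,3\}$ shows $|V(H)| \geq 4$. Fix any vertex $x$. By minimality, $H - x$ or $\overline{H} - x$ is disconnected, and since $P_4$ is self-complementary we may assume $H - x$ is disconnected, with components $C_1, \dots, C_k$ where $k \geq 2$. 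Connectedness of $H$ forces $x$ to have a neighbour in each $C_i$, and connectedness of $\overline{H}$ forces $x$ to have a non-neighbour, say in $C_j$. Travelling along a path inside $C_j$ from that non-neighbour to a neighbour of $x$ yields adjacent vertices $y', z'$ with $y'$ a non-neighbour and $z'$ a neighbour of $x$; choosing a neighbour $w$ of $x$ in a different component $C_i$, the set $\{w, x, z', y'\}$ induces a $P_4$, since $wx$, $xz'$, $z'y'$ are edges while $wz'$, $wy'$, $xy'$ are non-edges (the first two because $w$ and $\{z', y'\}$ lie in different components of $H - x$).

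Given the lemma, ``$P_4$-free $\Rightarrow$ cograph'' is an induction on the number of vertices: a single vertex is a cograph, and for $\geq 2$ vertices the contrapositive of the lemma says the graph or its complement is disconnected, so (using that induced subgraphs of $P_4$-free graphs are $P_4$-free) the graph is either a coproduct of two smaller $P_4$-free graphs or, via complementation, a join of two smaller $P_4$-free graphs, and the inductive hypothesis finishes the job. I expect the main obstacle to be the proof of the lemma: specifically, setting up the minimality reduction correctly so that one may assume $H - x$ is disconnected for an arbitrary choice of $x$, and then pinning down the vertices of the induced $P_4$; the rest is routine bookkeeping with connectedness and complementation.
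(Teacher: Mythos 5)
The paper does not prove this statement --- it is a cited \emph{Fact}, attributed to \cite[Thm.~2]{Corneil_1981} --- so there is no in-paper argument to compare against. Your proof is a correct, self-contained derivation along the standard route: the easy direction by structural induction on the cograph construction (using self-complementarity of $P_4$ to reduce the join case to the coproduct case) is sound, and the substantive lemma you isolate --- a graph on at least two vertices with both it and its complement connected contains an induced $P_4$ --- is the classical theorem of Seinsche. The proof of the lemma checks out: the case analysis gives $|V(H)|\ge 4$; the identity $\overline{H}-x=\overline{H-x}$ (used silently) lets minimality force $H-x$ or $\overline{H}-x$ to be disconnected, and self-complementarity lets you assume it is $H-x$; and $\{w,x,z',y'\}$ does induce a $P_4$, since $wx$, $xz'$, $z'y'$ are edges by choice while $wz'$ and $wy'$ are non-edges because $w$ sits in a different component of $H-x$ than $z',y'$, and $xy'$ is a non-edge by choice of $y'$. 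The only thing worth tightening is the phrase ``minimal among all such graphs,'' which is not literally well-posed (a minimum over a proper class); either of the clean formulations --- minimal induced subgraph of the given graph with the stated property, which you also offer, or a minimum-order counterexample in a proof by contradiction --- fixes this.
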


\begin{defn}
  A theory $T$ \emph{admits arbitrary cograph consistency-inconsistency patterns for $\varphi(x,y)$} if for every cograph $(V,E)$, there is a family $(b_v : v \in V)$ of parameters such that for every $V_0 \subseteq V$, $\{\varphi(x,b_v) : v \in V_0\}$ is consistent if and only if $V_0$ is an anticlique.
\end{defn}

\begin{lem}\label{lem:finite-weaves-cographs}
  For any $d < \omega$, the graph $((2^2)^d,E_d)$ where $E_d = \{\{\sigma,\tau\} \subseteq (2^2)^d : \{\sigma,\tau\}~\text{is an up-}1\text{-comb}\}$ is a cograph.
\end{lem}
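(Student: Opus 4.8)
The plan is to exploit the recursive structure of $(2^2)^d$ and prove the statement by induction on $d$, writing $G_d$ for the graph $((2^2)^d,E_d)$. The inductive step will exhibit $G_d$ as $(G_{d-1} \graphjoin G_{d-1}) \oplus (G_{d-1} \graphjoin G_{d-1})$; since $G_0$ is the singleton graph, this suffices. First I would set up the decomposition: for $d \geq 1$, partition $(2^2)^d$ into the four blocks $B_{i,j} = \{\sigma \in (2^2)^d : \sigma(0) = (i,j)\}$ for $(i,j) \in 2^2$, and note that each $B_{i,j}$ is naturally in bijection with $(2^2)^{d-1}$ by reindexing (i.e.\ $\sigma \mapsto (\ell \mapsto \sigma(\ell+1))$).

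The heart of the argument is to unwind \cref{defn:narrow-wide-comb} for up-$1$-combs. The relevant observation is that a two-element set $\{\sigma,\tau\}$ is an up-$1$-comb exactly when $\{\sigma\}$ is narrowly below $\{\tau\}$ or $\{\tau\}$ is narrowly below $\{\sigma\}$, i.e.\ when there is a $\tau_0 \in (2^2)^{<d}$ and $i^\ast < 2$ such that (after possibly swapping $\sigma,\tau$) $\tau_0 \concat (i^\ast,0) \subseteq \sigma$ and $\tau_0 \concat (i^\ast,1) \subseteq \tau$. From this I would verify three facts: (i) the induced subgraph of $G_d$ on any block $B_{i,j}$ is isomorphic to $G_{d-1}$, because any witnessing $\tau_0$ for an edge inside $B_{i,j}$ must have length $\geq 1$ (length $0$ would force $\sigma(0) \in \{(i^\ast,0),(i^\ast,1)\}$, contradicting $\sigma(0) = (i,j) = \tau(0)$), so it is of the form $(i,j)\concat\tau_0'$ and corresponds, under reindexing, to an up-$1$-comb in $(2^2)^{d-1}$; (ii) every element of $B_{i,0}$ is adjacent in $G_d$ to every element of $B_{i,1}$, witnessed by $\tau_0 = \varnothing$ (a topped initial segment of $\{0,\dots,d-1\}$ precisely because $d \geq 1$) and $i^\ast = i$; and (iii) there are no edges of $G_d$ between $B_{i,j}$ and $B_{i',j'}$ when $i \neq i'$, since a witnessing $\tau_0$ of length $0$ forces $i = i^\ast = i'$, while one of length $\geq 1$ forces $\sigma(0) = \tau_0(0) = \tau(0)$, hence again $i = i'$.

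Combining (i)--(iii): the induced subgraph of $G_d$ on $B_{i,0} \cup B_{i,1}$ is the graph join of two copies of $G_{d-1}$, and $G_d$ is the coproduct of this subgraph (for $i = 0$) with the corresponding one (for $i = 1$). Thus $G_d \cong (G_{d-1} \graphjoin G_{d-1}) \oplus (G_{d-1} \graphjoin G_{d-1})$. Since the singleton graph is a cograph and the class of cographs is closed under coproducts and graph joins (and under isomorphism, e.g.\ by the $P_4$-free characterization of \cref{fact:cograph}), the induction is complete, with the base case $d = 0$ being the one-vertex graph.

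The only nontrivial work is the explicit verification of (i), (ii), and (iii) from \cref{defn:narrow-wide-comb}, which is the main obstacle only in the sense of requiring care with the bookkeeping — keeping straight that the first coordinate is the horizontal one split by "wide'' configurations while the second is the vertical one split by "narrowly below", and handling topped initial segments (in particular the status of $\varnothing$) correctly. No genuinely new idea is needed beyond recognizing the self-similar block decomposition.
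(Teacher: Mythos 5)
Your proof is correct and takes essentially the same route as the paper: identify $G_0$ as the singleton and show $G_{d+1} \cong (G_d \graphjoin G_d) \oplus (G_d \graphjoin G_d)$ via the block decomposition by $\sigma(0)$. The paper's proof states this isomorphism as ``immediate'' without verifying facts (i)--(iii); your writeup simply supplies the omitted bookkeeping, correctly.
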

\begin{proof}
  Let $G_d = ((2^2)^d,E_d)$. $G_0$ is the singleton graph and so is obviously a cograph, and it is immediate that $G_{d+1}$ is isomorphic to $(G_d \graphjoin G_d) \oplus (G_d \graphjoin G_d)$.
\end{proof}

\begin{lem}\label{lem:unordered-pair-exact-char}
  For any $d < \omega$, any unordered pair $\{\sigma,\tau\} \subseteq (2^2)^d$ is exclusively either an up-$1$-comb or a wide right-$1$-comb.
\end{lem}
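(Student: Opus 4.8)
The plan is to show that, for a genuine pair $\{\sigma,\tau\}$ with $\sigma \ne \tau$ in $(2^2)^d$, whether it is a (finite) up-$1$-comb or a (finite) wide right-$1$-comb is governed entirely by a single bit. First I would let $\ell < d$ be the least coordinate at which $\sigma$ and $\tau$ differ and set $\rho = \sigma\res\ell = \tau\res\ell$, their common initial segment (an element of $(2^2)^{<d}$), and then establish the two characterizations: $\{\sigma,\tau\}$ is a finite up-$1$-comb if and only if $\fst(\sigma(\ell)) = \fst(\tau(\ell))$, and $\{\sigma,\tau\}$ is a finite wide right-$1$-comb if and only if $\fst(\sigma(\ell)) \ne \fst(\tau(\ell))$. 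Since exactly one of these two conditions on the first coordinates holds, and since for a two-element set being a (wide) right-$1$-comb (resp.\ up-$1$-comb) in the sense of \cref{defn:narrow-wide-comb} coincides with being a finite one, the lemma follows.

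The first step toward the characterizations is a trivial structural observation about two-element combs. If $\{\sigma,\tau\}$ is a finite up-$1$-comb then it is obtained as $A \cup B$ with $A,B$ finite up-$1$-combs, $|A| \le 1$, and $A$ narrowly below $B$; since $\varnothing$ is not a finite up-$1$-comb, $A = \{\alpha\}$ is a singleton, and $\alpha \notin B$ because no function can extend both $\rho'\concat(i,0)$ and $\rho'\concat(i,1)$, which would be required if $\alpha$ lay in both $A$ and $B$. Hence $B$ is the remaining singleton, so $\{\sigma,\tau\}$ is a finite up-$1$-comb if and only if $\{\sigma\}$ is narrowly below $\{\tau\}$ or $\{\tau\}$ is narrowly below $\{\sigma\}$. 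The same reasoning (a function cannot extend both $\rho'\concat(0,j)$ and $\rho'\concat(1,j')$) shows $\{\sigma,\tau\}$ is a finite wide right-$1$-comb if and only if $\{\sigma\}$ is widely to the left of $\{\tau\}$ or $\{\tau\}$ is widely to the left of $\{\sigma\}$.

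The second step is to evaluate these "narrowly below" and "widely to the left" conditions against $\ell$ and $\rho$. If $\fst(\sigma(\ell)) = \fst(\tau(\ell)) = i$, then $\sigma(\ell)$ and $\tau(\ell)$ differ only in their second coordinates, so after possibly swapping names $\sigma(\ell) = (i,0)$ and $\tau(\ell) = (i,1)$, and $\rho$ together with $i$ witnesses that $\{\sigma\}$ is narrowly below $\{\tau\}$. Conversely, a witness $(\rho',i')$ to $\{\sigma\}$ narrowly below $\{\tau\}$ forces $\rho'$ to be a common initial segment of $\sigma$ and $\tau$ whose top coordinate already sees a difference (namely $(i',0)$ versus $(i',1)$); since $\sigma$ and $\tau$ agree strictly below $\ell$ and $\rho' \subseteq \sigma,\tau$, this forces $|\rho'| = \ell$, whence $\fst(\sigma(\ell)) = i' = \fst(\tau(\ell))$. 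Interchanging the roles of the two coordinates, the identical bookkeeping shows $\{\sigma\}$ is widely to the left of $\{\tau\}$ (with witness $\rho$) exactly when $\fst(\sigma(\ell)) \ne \fst(\tau(\ell))$. Combining this with the first step yields the two characterizations, and hence the lemma.

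I do not expect a genuine obstacle here; the only point requiring a moment's care is the structural observation in the second paragraph, specifically ruling out the degenerate decomposition $\{\sigma,\tau\} = \{\alpha\} \cup \{\sigma,\tau\}$, which is handled immediately since no function extends both of two distinct one-step extensions of a common initial segment.
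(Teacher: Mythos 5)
Your proof is correct and takes essentially the same approach as the paper: both hinge on examining the first coordinate of $\sigma(\ell)$ and $\tau(\ell)$ at the least position $\ell$ where $\sigma$ and $\tau$ differ (equivalently, at the top of their greatest common initial segment), and observing that equality there gives a narrowly above/below pair while inequality gives a widely left/right pair. The paper's proof is terser—it explicitly argues only the direction ``not an up-$1$-comb $\Rightarrow$ wide right-$1$-comb'' and leaves mutual exclusivity implicit—whereas you spell out both directions of the single-bit characterization and rule out the degenerate decomposition, which is a reasonable amount of extra care but not a different route.
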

\begin{proof}
  Assume that $\{\sigma,\tau\}$ is not an up-$1$-comb. Let $\e$ be the greatest common initial segment of $\sigma$ and $\tau$. Since $\{\sigma,\tau\}$ is not an up-$1$-comb, it cannot be the case that $\{\sigma\}$ is narrowly above or below $\{\tau\}$. Therefore we must have that $\sigma$ extends $\e \concat (i,j)$ and $\tau$ extends $\e \concat (k,\ell)$ with $i \neq k$. Regardless of whether $i = 0$ or $i = 1$, this implies that $\{\sigma\}$ is widely to the left or widely to the right of $\{\tau\}$ and so $\{\sigma,\tau\}$ is a wide right-$1$-comb.
\end{proof}

\begin{lem}\label{lem:right-omega-comb-char}
  For any $d < \omega$, $C \subseteq (2^2)^d$ is a wide right-$\omega$-comb if and only if it does not have a subset that is an up-$1$-comb of size $2$.
\end{lem}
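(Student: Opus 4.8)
The plan is to prove the two implications separately. The forward implication ($C$ a wide right-$\omega$-comb $\Rightarrow$ no size-$2$ up-$1$-comb subset) is essentially immediate, while the reverse implication is an induction on $|C|$, which is available because $(2^2)^d$ is finite.

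For the forward direction I would first record the local picture of two-element sets. Given $\sigma \neq \tau$ in $(2^2)^d$, let $\ell$ be the least coordinate with $\sigma(\ell) \neq \tau(\ell)$ and put $\e = \sigma\res\ell = \tau\res\ell$; unwinding \cref{defn:narrow-wide-comb} shows that $\{\sigma,\tau\}$ is an up-$1$-comb exactly when $\sigma(\ell)$ and $\tau(\ell)$ have the same first coordinate (equivalently, different second coordinates), witnessed by $\e$, and by \cref{lem:unordered-pair-exact-char} this is precisely the complement of $\{\sigma,\tau\}$ being a wide right-$1$-comb. Now if $C$ is a wide right-$\omega$-comb, then every two-element subset $\{\sigma,\tau\} \subseteq C$ is a finite wide right-$\omega$-comb; since the two parts of a wide-left join are disjoint, the only way a two-element set can be a finite wide right-$n$-comb is as a single wide-left join $\{\sigma\} \cup \{\tau\}$ of singletons, so $\{\sigma,\tau\}$ is in fact a wide right-$1$-comb, hence not an up-$1$-comb.

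For the reverse direction, since $(2^2)^d$ is finite, $C$ is finite and every subset of $C$ again has no size-$2$ up-$1$-comb subset, so it suffices to prove by induction on $|D|$ that every nonempty finite $D \subseteq (2^2)^d$ with no size-$2$ up-$1$-comb subset is a finite wide right-$\omega$-comb. The case $|D| = 1$ is a singleton. For $|D| \geq 2$, let $\ell$ be the least coordinate at which the elements of $D$ disagree and let $\e$ be their common restriction to the coordinates below $\ell$. The key step is that the hypothesis forces any two elements of $D$ differing at $\ell$ to differ already in their first coordinate at $\ell$ (otherwise $\e$ witnesses that pair being an up-$1$-comb); hence, setting $D^b = \{\sigma \in D : \fst(\sigma(\ell)) = b\}$ for $b < 2$, we get $D = D^0 \sqcup D^1$ with both parts nonempty (as some pair of elements of $D$ differs at $\ell$). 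Every element of $D^0$ extends $\e \concat (0,0)$ or $\e \concat (0,1)$ and every element of $D^1$ extends $\e \concat (1,0)$ or $\e \concat (1,1)$, so $D^0$ is widely to the left of $D^1$; since $|D^0|, |D^1| < |D|$ and both inherit the hypothesis, the induction hypothesis makes each a finite wide right-$\omega$-comb, and therefore $D = D^0 \cup D^1$ is a finite wide right-$\omega$-comb. The one point to be careful about is that this recursion must be carried out entirely within the class of wide right-$\omega$-combs: a piece $D^b$ need not be constant at coordinate $\ell$, so the split at $\ell$ is genuinely only a \emph{wide}-left join and not a narrow one, which is exactly why the characterization is phrased in terms of wide right-$\omega$-combs. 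Everything else is routine bookkeeping with \cref{defn:narrow-wide-comb}.
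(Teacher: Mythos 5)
Your proof is correct and takes essentially the same route as the paper: the forward direction is the observation that two-element subsets of a wide right-$\omega$-comb are wide right-$1$-combs and hence not up-$1$-combs by \cref{lem:unordered-pair-exact-char}, and the reverse direction is the same induction on $|C|$ using the common initial segment $\e$ and a split by first coordinate at the divergence level $\ell$ (the paper phrases the split in terms of the four sets $C_{i,j}$ of elements extending $\e\concat(i,j)$ and argues at most one of $C_{0,0},C_{0,1}$ and at most one of $C_{1,0},C_{1,1}$ is nonempty, but this is the same observation as your ``pairs differing at $\ell$ must differ in first coordinate'').

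One small inaccuracy in your closing remark: under the hypothesis that $D$ has no size-$2$ up-$1$-comb subset, each piece $D^b$ actually \emph{is} constant at coordinate $\ell$ (two elements of $D^b$ that disagreed at $\ell$ would agree in first coordinate there, contradicting your key step). The genuine reason ``wide'' is needed is not non-constancy of the pieces but that the two constant values $\sigma(\ell)\in\{(0,j)\}$ and $\tau(\ell)\in\{(1,j')\}$ may have $j\neq j'$, in which case the split is a wide-left join but not a narrow one (e.g.\ $\{(0,0),(1,1)\}\subseteq(2^2)^1$ is a wide right-$\omega$-comb but not a right-$\omega$-comb). This does not affect the validity of your argument, since you only ever claim a wide-left separation, which holds regardless.
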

\begin{proof}
  First note that if $C \subseteq (2^2)^d$ is a wide right-$\omega$-comb, then every subset of it is as well, and so by \cref{lem:unordered-pair-exact-char}, we have that no subset of $C$ of size $2$ is an up-$1$-comb.

  To prove that if $C$ has no subset of size $2$ that is an up-$1$-comb, then $C$ is a wide right-$\omega$-comb, we will proceed by induction on the size of $C$. This is immediate for $|C|=1$ and for $|C|=2$, this follows from \cref{lem:unordered-pair-exact-char}. Now suppose that we know this for all $A \subseteq (2^2)^d$ with $|A| < n$ and fix some $C$ with $|C| = n$. 

  Let $\e$ be the greatest common initial segment of all elements of $C$. For each $(i,j) \in 2^2$, let $C_{i,j}$ be the set of elements of $C$ extending $\e \concat (i,j)$. By the choice of $\e$, it must be the case that at least two of the $C_{i,j}$'s are non-empty. If both $C_{0,0}$ and $C_{0,1}$ or if both $C_{1,0}$ and $C_{1,1}$ are non-empty, then $C$ has a subset of size $2$ that is an up-$1$-comb, which we have assumed does not happen. Therefore it must be the case that at most one of $C_{0,0}$ and $C_{0,1}$ and at most one of $C_{1,0}$ and $C_{1,1}$ is non-empty, which together with the induction hypothesis implies that $C$ is a wide right-$\omega$-comb.
\end{proof}

\begin{lem}\label{lem:embed-cograph-in-weave}
  For every cograph $(V,E)$, there is a $d < \omega$ and an injective function $f : V \to (2^2)^d$ such that for any $v_0,v_1 \in V$, $v_0 \mathrel{E} v_1$ if and only if $\{f(v_0),f(v_1)\}$ is an up-$1$-comb and $\neg v_0 \mathrel{E} v_1$ if and only if $\{f(v_0),f(v_1)\}$ is a wide right-$1$-comb.
\end{lem}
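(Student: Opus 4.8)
The plan is to prove the lemma by structural induction on the cograph $(V,E)$, showing that every cograph embeds as an induced subgraph of one of the graphs $G_d = ((2^2)^d,E_d)$ from \cref{lem:finite-weaves-cographs}. The first thing to note is that, thanks to \cref{lem:unordered-pair-exact-char}, once $f$ is injective the two biconditionals in the statement are equivalent: for distinct $\sigma = f(v_0)$ and $\tau = f(v_1)$, the pair $\{\sigma,\tau\}$ is a wide right-$1$-comb exactly when it is not an up-$1$-comb. So it is enough to build an injection $f : V \to (2^2)^d$ with $v_0 \mathrel{E} v_1 \Leftrightarrow \{f(v_0),f(v_1)\} \in E_d$ --- that is, an induced-subgraph embedding of $(V,E)$ into $G_d$.

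Next I would record a monotonicity observation: $G_d$ embeds as an induced subgraph of $G_{d+1}$, and hence of $G_e$ for every $e \geq d$. This is immediate from the isomorphism $G_{d+1} \cong (G_d \graphjoin G_d) \oplus (G_d \graphjoin G_d)$ noted in the proof of \cref{lem:finite-weaves-cographs}: a single copy of $G_d$ is an induced subgraph of $G_d \graphjoin G_d$ (the graph join adds no edges within either factor), and $G_d \graphjoin G_d$ is an induced subgraph of the coproduct, being one of its two summands.

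Now the induction. For the base case, the singleton cograph embeds into $G_0$, which is itself the singleton. For the inductive step, suppose $H_0$ embeds as an induced subgraph of $G_{d_0}$ and $H_1$ of $G_{d_1}$, and put $d = \max(d_0,d_1)$, so by monotonicity both embed as induced subgraphs of $G_d$. Label the four copies of $G_d$ inside $G_{d+1} \cong (G_d \graphjoin G_d) \oplus (G_d \graphjoin G_d)$ as $A_0,A_1$ (the two factors of the first summand) and $B_0,B_1$ (the two factors of the second summand); then $A_0$ and $A_1$ are in a graph-join relation while $A_0$ and $B_0$ are in a coproduct relation. To embed $H_0 \graphjoin H_1$, send $H_0$ into $A_0$ and $H_1$ into $A_1$: inside each factor the induced structure is that of $G_d$ and hence restricts correctly, and every cross pair between $A_0$ and $A_1$ is an edge, matching $H_0 \graphjoin H_1$. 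To embed $H_0 \oplus H_1$, send $H_0$ into $A_0$ and $H_1$ into $B_0$: again the structure within each factor is preserved, and there are no edges between $A_0$ and $B_0$. In both cases the resulting map is an induced-subgraph embedding into $G_{d+1}$, completing the induction. Combining with the first paragraph finishes the proof.

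There is no substantial obstacle here; the only point requiring a little care is keeping track of which of the four copies of $G_d$ inside $G_{d+1}$ stand in a graph-join relation and which in a coproduct relation, so that joins of cographs are routed through $A_0,A_1$ and coproducts through $A_0,B_0$.
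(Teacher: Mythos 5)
Your proof is correct and takes essentially the same route as the paper: structural induction on the cograph, equalize the two depths $d_0,d_1$, and then in the inductive step place $H_0$ and $H_1$ into appropriate copies of $G_d$ inside $G_{d+1}$ so that the cross-pairs are edges (for a join) or non-edges (for a coproduct). Your phrasing in terms of the decomposition $G_{d+1} \cong (G_d \graphjoin G_d) \oplus (G_d \graphjoin G_d)$ from \cref{lem:finite-weaves-cographs} is a slightly more abstract packaging of the paper's explicit coordinate construction, and it has the minor advantage of making it unambiguous that the new coordinate distinguishing the quadrants must be the \emph{first} coordinate (the paper's notation $f_0(v)\concat(0,0)$, if read literally as a suffix per \cref{defn:L-sequence}, would append at the wrong end, so prefixing is clearly intended there).
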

\begin{proof}
  Fix cographs $G_0 = (V_0,E_0)$ and $G_1 = (V_1,E_1)$ and suppose that we already have such functions $f_0 : V_0 \to (2^2)^{d_0}$ and $f_1 : V_1 \to (2^2)^{d_1}$. Let $d = \max\{d_0,d_1\}$. By embedding $(2^2)^{d_i}$ into $(2^2)^d$, we may assume that $d_0 = d_1 = d$.

  To build the required function $f$ for $G_0 \oplus G_1$, just take $f(v) = f_0(v) \concat (0,0)$ for $v \in V_0$ and $f(v) = f_1(v) \concat (1,0)$ for $v \in V_1$. And to build the required function $f$ for $G_0 \graphjoin G_1$, just take $f(v) = f_0(v) \concat (0,0)$ for $v \in V_0$ and $f(v) = f_1(v) \concat (0,1)$ for $v \in V_1$.

  By structural induction we are able to do this for all cographs.
\end{proof}

\begin{prop}\label{prop:weave-cograph-equiv}
  Fix a theory $T$ and a formula $\varphi(x,y)$. The following are equivalent.
  \begin{enumerate}
  \item\label{cograph-equiv-cograph} $T$ admits arbitrary cograph consistency-inconsistency patterns for $\varphi(x,y)$.
  \item\label{cograph-equiv-omega} $T$ has a (strong) $(2,\omega,\omega)$-weave for $\varphi(x,y)$ of depth $\omega$.
  \item\label{cograph-equiv-1} $T$ has a (strong) $(2,1,\omega)$-weave for $\varphi(x,y)$ of depth $\omega$.
  \item\label{cograph-equiv-d-omega} For every $d < \omega$, $T$ has a $(2,\omega,\omega)$-weave for $\varphi(x,y)$ of depth $d$.
  \item\label{cograph-equiv-d-1} For every $d < \omega$, $T$ has a $(2,1,\omega)$-weave for $\varphi(x,y)$ of depth $d$.
  \end{enumerate}
\end{prop}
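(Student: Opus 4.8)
The plan is to treat the four weave conditions (\ref{cograph-equiv-omega})--(\ref{cograph-equiv-d-1}) as essentially interchangeable and to put the real content into the equivalence of (\ref{cograph-equiv-cograph}) with them. The link is the comb/graph dictionary inside $(2^2)^d$: the up-$\omega$-combs are cliques of the cograph $G_d = ((2^2)^d,E_d)$ of \cref{lem:finite-weaves-cographs} (only the easy direction, up-$\omega$-comb $\Rightarrow$ clique, is needed, and it is a routine induction), while by \cref{lem:unordered-pair-exact-char} and \cref{lem:right-omega-comb-char} the wide right-$\omega$-combs are exactly its anticliques. First I would clear away the soft equivalences. At each fixed depth, any $(2,1,\omega)$-weave is a $(2,\omega,\omega)$-weave by the asymmetry remark following \cref{defn:weave} (with $k=2$), and conversely since every up-$1$-comb is an up-$\omega$-comb; so (\ref{cograph-equiv-omega})$\Leftrightarrow$(\ref{cograph-equiv-1}) and (\ref{cograph-equiv-d-omega})$\Leftrightarrow$(\ref{cograph-equiv-d-1}). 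Also (\ref{cograph-equiv-d-omega})$\Rightarrow$(\ref{cograph-equiv-omega}) is \cref{prop:finite-depth-to-depth-omega}, (\ref{cograph-equiv-omega})$\Rightarrow$(\ref{cograph-equiv-d-omega}) is truncation along a pointwise-extending injection (\cref{lem:weave-truncation}), and the ``(strong)'' in (\ref{cograph-equiv-omega}) and (\ref{cograph-equiv-1}) is justified by \cref{prop:weave-iff-strong-weave}. Finally, truncating a strong depth-$\omega$ weave to finite depth preserves strongness, since the proof of \cref{lem:foo-truncate} goes through verbatim for wide right-$n$-combs (a pointwise-extending image retains the $\sigma$ witnessing ``widely to the left of''). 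Hence all of (\ref{cograph-equiv-omega})--(\ref{cograph-equiv-d-1}) are equivalent to the statement $(\ast)$: \emph{for every $d<\omega$, $T$ has a strong $(2,1,\omega)$-weave for $\varphi$ of depth $d$}, and it remains to show (\ref{cograph-equiv-cograph})$\Leftrightarrow(\ast)$.

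For (\ref{cograph-equiv-cograph})$\Rightarrow(\ast)$, I would fix $d<\omega$. By \cref{lem:finite-weaves-cographs} the graph $G_d$ is a cograph, so (\ref{cograph-equiv-cograph}) yields a family $(b_\sigma:\sigma\in(2^2)^d)$ with $\{\varphi(x,b_\sigma):\sigma\in V_0\}$ consistent iff $V_0$ is an anticlique of $G_d$. This family is a strong $(2,\omega,\omega)$-weave --- hence a strong $(2,1,\omega)$-weave --- of depth $d$: if $C\subseteq(2^2)^d$ is a finite up-$\omega$-comb then, by the induction just mentioned, every two-element subset of $C$ is an up-$1$-comb, i.e.\ an edge of $G_d$, so every pair of formulas in $\{\varphi(x,b_\sigma):\sigma\in C\}$ is inconsistent and the set is $2$-inconsistent; if $C$ is a finite wide right-$\omega$-comb then by \cref{lem:right-omega-comb-char} $C$ has no up-$1$-comb of size $2$ among its subsets, so $C$ is an anticlique and $\{\varphi(x,b_\sigma):\sigma\in C\}$ is consistent (the non-strong consistency clause is then automatic, as every right-$\omega$-comb is a wide right-$\omega$-comb).

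For $(\ast)\Rightarrow$(\ref{cograph-equiv-cograph}), I would take an arbitrary cograph $(V,E)$ and apply \cref{lem:embed-cograph-in-weave} to get $d<\omega$ and an injection $f:V\to(2^2)^d$ with $v_0\mathrel{E}v_1$ iff $\{f(v_0),f(v_1)\}$ is an up-$1$-comb and $\neg\,v_0\mathrel{E}v_1$ iff $\{f(v_0),f(v_1)\}$ is a wide right-$1$-comb. Take a strong $(2,1,\omega)$-weave $(b_\sigma:\sigma\in(2^2)^d)$ of depth $d$ from $(\ast)$ and set $b_v:=b_{f(v)}$. Given $V_0\subseteq V$: if $V_0$ is not an anticlique then some pair $\{f(v_0),f(v_1)\}$ is an up-$1$-comb of size $2$, so $\{\varphi(x,b_{v_0}),\varphi(x,b_{v_1})\}$ is $2$-inconsistent and $\{\varphi(x,b_v):v\in V_0\}$ is inconsistent; if $V_0$ is an anticlique then by \cref{lem:unordered-pair-exact-char} no two-element subset of $f``[V_0]$ is an up-$1$-comb, so $f``[V_0]$ is a wide right-$\omega$-comb by \cref{lem:right-omega-comb-char}, whence $\{\varphi(x,b_v):v\in V_0\}=\{\varphi(x,b_\sigma):\sigma\in f``[V_0]\}$ is consistent since $(b_\sigma)$ is a strong weave. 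Thus $T$ admits arbitrary cograph consistency-inconsistency patterns for $\varphi$, i.e.\ (\ref{cograph-equiv-cograph}) holds.

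The one delicate point --- and the thing most worth getting right in the write-up --- is that the pattern-to-weave direction genuinely requires a \emph{strong} weave, since anticliques correspond to \emph{wide} right-$\omega$-combs rather than narrow ones; this is what forces the detour through \cref{prop:weave-iff-strong-weave} (an arbitrary $(2,1,\omega)$-weave does not suffice directly) together with the observation that truncation preserves the wide right-comb consistency condition. Everything else is bookkeeping against the dictionary established in Lemmas~\ref{lem:finite-weaves-cographs}--\ref{lem:embed-cograph-in-weave}.
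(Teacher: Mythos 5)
Your proof is correct and follows essentially the same route as the paper's: the soft equivalences among (\ref{cograph-equiv-omega})--(\ref{cograph-equiv-d-1}) via \cref{prop:finite-depth-to-depth-omega}, truncation, and \cref{prop:weave-iff-strong-weave}, then \cref{lem:finite-weaves-cographs} and \cref{lem:right-omega-comb-char} for (\ref{cograph-equiv-cograph})$\Rightarrow$(\ref{cograph-equiv-d-omega}) and \cref{lem:embed-cograph-in-weave} for the converse direction. You are somewhat more careful than the paper's terse proof in spelling out that truncation preserves the wide-right-comb consistency clause and in isolating the point that the weave-to-cograph direction genuinely needs a \emph{strong} weave, both of which the paper leaves implicit.
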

\begin{proof}
  The strong and non-strong versions of (\ref{cograph-equiv-omega}) and (\ref{cograph-equiv-1}) are equivalent by \cref{prop:weave-iff-strong-weave}.
  
  The equivalence of (\ref{cograph-equiv-omega})-(\ref{cograph-equiv-d-1}) is immediate from \cref{prop:finite-depth-to-depth-omega} and the discussion after \cref{defn:weave}. 

  By \cref{lem:embed-cograph-in-weave}, we have that (\ref{cograph-equiv-omega}) implies (\ref{cograph-equiv-cograph}). By Lemmas~\ref{lem:finite-weaves-cographs} and \ref{lem:right-omega-comb-char}, we have that (\ref{cograph-equiv-cograph}) implies (\ref{cograph-equiv-d-omega}).
\end{proof}

We now get the following corollary, although it can also be proven fairly directly (in a manner analogous to the proof of \cref{thm:failure-of-Kim-bi-invariant-to-weaves}) without using the machinery of weaves.

\begin{cor}
  If $T$ fails to satisfy \textup{($2$, bi-invariant or semi-reliably invariant, strongly bi-invariant)}-\KL{}, then $T$ admits arbitrary cograph consistency-inconsistency patterns for some formula.
\end{cor}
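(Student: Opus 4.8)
The plan is to chain together two results already established in the excerpt: \cref{cor:awkward}, which converts a failure of this form of Kim's lemma into a weave, and \cref{prop:weave-cograph-equiv}, which identifies the relevant weaves with cograph patterns. First I would unwind the definition of ``strongly bi-invariant'': since this means ``$\omega$-strongly bi-invariant,'' the hypothesis that $T$ fails \textup{($2$, bi-invariant or semi-reliably invariant, strongly bi-invariant)}-\KL{} is literally the hypothesis that $T$ fails \textup{($2$, bi-invariant or semi-reliably invariant, $\omega$-strongly bi-invariant)}-\KL{}. Applying the first bullet of \cref{cor:awkward} with $k = 2$ and $n = \omega$ then produces a formula $\varphi(x,y)$ such that $T$ has a $(2,1,\omega)$-weave for $\varphi(x,y)$ of depth $\omega$. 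This is exactly the third listed condition of \cref{prop:weave-cograph-equiv} for $\varphi$, so by the equivalence established there (the implication passing through \cref{lem:embed-cograph-in-weave}) we conclude that $T$ admits arbitrary cograph consistency-inconsistency patterns for that same $\varphi$. That completes the argument.

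An alternative route, as the surrounding text suggests, would bypass weaves and mimic the proof of \cref{thm:failure-of-Kim-bi-invariant-to-weaves} directly. Fix an invariance base $A$, a formula $\varphi(x,b)$, a bi-invariant or semi-reliably $A$-invariant type $p(y) \supseteq \tp(b/A)$, and a strongly $A$-bi-invariant type $q(y) \supseteq \tp(b/A)$ witnessing the failure, so that $\varphi(x,b)$ $2$-divides along $p$ but does not divide along $q$. Given a cograph $(V,E)$, one builds $(b_v : v \in V)$ by induction on a cograph decomposition of $(V,E)$: the singleton is realized by $b_v \models p\res A$; a coproduct $G_0 \oplus G_1$ is handled by amalgamating the previously built families so that, in a suitable enumeration, the two pieces together form a Morley sequence in $q$ over $A$ (possible because $q^{\otimes \ell}$ is $A$-invariant for all finite $\ell$, exactly as in the $(\mathtt{R})$-step of \cref{thm:failure-of-Kim-bi-invariant-to-weaves}), so no cross pair becomes inconsistent; and a graph join $G_0 \graphjoin G_1$ is handled the same way with $p$ in place of $q$, so that every cross pair becomes $2$-inconsistent. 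Structural induction then yields that $\{\varphi(x,b_v) : v \in V_0\}$ is consistent precisely when $V_0$ is an anticlique, which is the desired conclusion.

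The main obstacle is minor on either route. In the first route it amounts only to the bookkeeping of checking that a single fixed formula $\varphi$ is carried unchanged through \cref{cor:awkward} and through \cref{prop:weave-cograph-equiv}. In the direct route the one subtle point is the same one encountered in the semi-reliably invariant case of \cref{thm:failure-of-Kim-bi-invariant-to-weaves}: when $p$ is merely semi-reliably $A$-invariant, each graph-join amalgamation step must be accompanied by a semi-reliably $A$-invariant type all of whose single-variable restrictions equal $p$, so that the amalgamation can be iterated; when $p$ is bi-invariant this extra bookkeeping is automatic.
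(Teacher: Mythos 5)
Your first route is exactly the paper's proof, which simply cites \cref{cor:awkward} (first bullet, with $k=2$, $n=\omega$) followed by \cref{prop:weave-cograph-equiv}. Your alternative direct route is also sound and is precisely what the text immediately preceding the corollary alludes to when it says the result ``can also be proven fairly directly (in a manner analogous to the proof of \cref{thm:failure-of-Kim-bi-invariant-to-weaves}) without using the machinery of weaves.''
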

\begin{proof}
  This follows immediately from \cref{cor:awkward} and \cref{prop:weave-cograph-equiv}. 
\end{proof}

\begin{quest}
  Is the failure of \textup{($2$, bi-invariant, strongly bi-invariant)}-\KL{} equivalent to admitting arbitrary cograph consistency-inconsistency patterns for some formula?
\end{quest}

The analogous question for semi-reliable invariance seems less tractable given that at the moment there is no known way to build a failure of a version of Kim's lemma for (semi-)reliably invariant types (or even just Kim-strictly invariant types) from a combinatorial configuration.

The following question is suggested by \cite[Lem.~3.20]{Ahn2022} together with the fact that $(k,\omega,\omega)$-weaves and $k$-ATP trees have a certain family resemblance.

\begin{quest}
  Are the equivalent conditions in \cref{prop:weave-cograph-equiv} equivalent to admitting a $(k,\omega,\omega)$-weave of depth $\omega$ for any $k < \omega$?
\end{quest}

\section{$k$-grids}
\label{sec:GSLC-failure}

In this section we will describe a forbidden combinatorial consistency-inconsistency configuration that is (modulo set-theoretic assumptions) an upper bound of two consequences of NATP considered in \cite{NCTP}, namely ($k$, invariant, strongly bi-invariant)--Kim's lemma and generic stationary local character.

To define generic stationary local character, we first need to recall that $[O]^\kappa$ is the set of subsets of $O$ of cardinality $\kappa$. A subset $C \subseteq [O]^\kappa$ is \emph{unbounded} if for every $X \in [O]^\kappa$, there is a $Y \in C$ with $X \subseteq Y$. $C \subseteq [O]^\kappa$ is \emph{closed} if for any increasing chain $(X_i : i < \alpha)$ (with $\alpha \leq \kappa$) with $X_i \in C$ for each $i < \alpha$, $\bigcup_{i < \alpha} X_i \in C$. $C \subseteq [O]^\kappa$ is a \emph{club} if it is closed and unbounded. $S \subseteq [O]^\kappa$ is \emph{stationary} if for every club $C \subseteq [O]^\kappa$, $S \cap C$ is non-empty.

\begin{defn}\label{defn:gslc}
  For any type $r(x)$ and any small $M,N \models T$ with $M \preceq N$, we write $\Xi(p,M,N)$ for the following condition:
  \begin{itemize}
  \item[$ $] For any $M$-formula $\varphi(x,y)$ and any $d$ such that $\varphi(x,d) \in p(x)$ and $d \indi_M N$, $\varphi(x,d)$ does not Kim-divide over $N$.
  \end{itemize}
  $T$ satisfies \emph{generic stationary local character} if for every $\lambda$, there is a $\kappa \geq \lambda$ such that for every $\kappa^+$-saturated model $O$, type $p \in S(O)$, and $M \preceq O$ with $|M| \leq \lambda$, $\{N \preceq O : N \succeq M,~|N| \leq \kappa,~\Xi(p,M,N)\}$ is stationary in $[O]^\kappa$.

\end{defn}

Whenever we talk about chains or antichains in $L^2$ (for a linear order $L$) it will be in the sense of the product partial order (i.e., $(i,j) \leq (k,\ell)$ if and only if $i \leq k$ and $j \leq \ell$). Recall that a \emph{strict chain} in a partial order is a set $C$ satisfying that for any distinct $x,y \in C$, either $x < y$ or $y < x$.

\begin{defn}\label{defn:k-grid}
  Given a linear order $L$ and $k < \omega$, a \emph{$k$-grid for $\varphi(x,y)$ indexed by $L$} is a family $(b_{i,j}:i,j \in L)$ of parameters in the sort of $y$ satisfying that
  \begin{itemize}
  \item for any strict chain $C \subseteq L^2$, $\{\varphi(x,b_{i,j}) : (i,j) \in C\}$ is consistent and
  \item for any antichain $A \subseteq L^2$, $\{\varphi(x,b_{i,j}) : (i,j) \in A\}$ is $k$-inconsistent.
  \end{itemize}
  An \emph{infinite $k$-grid} is a $k$-grid indexed by $L$ for some infinite $L$.
\end{defn}

An easy compactness argument gives that if a theory $T$ has an infinite $k$-grid for $\varphi(x,y)$, then it has a $k$-grid for $\varphi(x,y)$ indexed by $L'$ for every infinite $L'$. The analogous configuration with $k$-inconsistent strict chains and consistent antichains is clearly equivalent. It is possible to show directly that any theory with an infinite grid for some formula has ATP, but we will get this as a corollary of other results.

Just like with weaves and strong weaves, it is natural to wonder whether the definition of $k$-grid needs to be stated in terms of strict chains rather than arbitrary chains. Say that $(b_{i,j}: i,j \in L)$ is a \emph{strong $k$-grid} if it is a $k$-grid with the additional property that for any chain $C \subseteq L^2$, $\{\varphi(x,b_{i,j}) : (i,j) \in C\}$ is consistent.

\begin{prop}
  A theory $T$ has an infinite $k$-grid for $\varphi(x,y)$ if and only if it has an infinite strong $k$-grid for $\varphi(x,y)$.
\end{prop}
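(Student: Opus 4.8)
The implication from strong $k$-grids to $k$-grids is immediate, since by definition every strong $k$-grid is a $k$-grid; so the real task is to produce an infinite strong $k$-grid for $\varphi(x,y)$ from an infinite $k$-grid for $\varphi(x,y)$. The plan is to reindex, in the spirit of \cref{prop:weave-iff-strong-weave}: a general chain of $L^2$ (in the product order) is a ``staircase'' that may run horizontally or vertically for a while, whereas a strict chain increases strictly in both coordinates, so we want a reindexing map of the grid that ``straightens'' every staircase into a genuinely strict chain while still sending antichains to antichains.

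Concretely, I would first invoke the compactness remark following \cref{defn:k-grid} to fix a $k$-grid $(b_\eta : \eta \in L^2)$ for $\varphi(x,y)$ indexed by the infinite linear order $L = \mathbb{Q}^2$ taken with the lexicographic order. I would then define $f : \mathbb{Q}^2 \to L^2$ by $f(i,j) = \bigl((i,j),\,(j,i)\bigr)$, where $(i,j)$ and $(j,i)$ are read as elements of $L$. The crux is to check two facts directly from the definition of the lexicographic order. First, if $C \subseteq \mathbb{Q}^2$ is a chain in the product order, then $f``[C]$ is a strict chain in $L^2$: whenever $(i,j) < (i',j')$ in the product order on $\mathbb{Q}^2$ we have either $i < i'$, or else $i = i'$ and $j < j'$, and in both cases the first coordinates of $f(i,j)$ and $f(i',j')$ are lexicographically strictly increasing, while the second coordinates are handled symmetrically using that $j < j'$ or else $j = j'$ and $i < i'$. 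Second, if $A \subseteq \mathbb{Q}^2$ is an antichain, then $f``[A]$ is an antichain in $L^2$: two incomparable elements of $\mathbb{Q}^2$ must cross, say $i < i'$ and $j > j'$, and then the first coordinates of $f(i,j)$ and $f(i',j')$ are lexicographically increasing while their second coordinates are lexicographically decreasing, so $f(i,j)$ and $f(i',j')$ are incomparable in $L^2$ (the other orientation being symmetric).

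Granting these two facts, the family $(b_{f(i,j)} : i,j \in \mathbb{Q})$ is an infinite strong $k$-grid for $\varphi(x,y)$: for any chain $C \subseteq \mathbb{Q}^2$ the set $\{\varphi(x,b_{f(i,j)}) : (i,j) \in C\}$ equals $\{\varphi(x,b_\eta) : \eta \in f``[C]\}$, which is consistent by the first fact and the $k$-grid hypothesis (so in particular strict chains, a special case, are consistent), and for any antichain $A \subseteq \mathbb{Q}^2$ the corresponding set is $k$-inconsistent by the second fact and the $k$-grid hypothesis. I do not anticipate a substantive obstacle here: the only point that needs attention is the elementary case analysis in the two facts above — keeping track of which direction the lexicographic comparison goes when one coordinate is tied, and in each of the two orientations of a crossing pair — which is entirely routine and exactly parallel to the narrow/wide bookkeeping in the proof of \cref{prop:weave-iff-strong-weave}.
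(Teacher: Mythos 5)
Your proof is correct, and it takes a genuinely different route from the paper's. Where you give a purely combinatorial reindexing — fixing a $k$-grid indexed by $L = \mathbb{Q}^2$ with the lexicographic order and then applying the injection $f(i,j) = \bigl((i,j),(j,i)\bigr)$ — the paper instead proceeds model-theoretically: it fixes a $k$-grid indexed by $\mathbb{R}$, packages it into a structure $(M,\mathbb{R},B)$ carrying the field structure on $\mathbb{R}$, passes to an elementary extension containing an infinitesimal $\varepsilon > 0$, and then perturbs the indices by $\varepsilon$ so that ties in one coordinate are broken by the infinitesimal shift, turning weak chains into strict ones while keeping crossings incomparable. Both methods realize the same underlying idea — perturb the index poset so that no two comparable points share a coordinate, without disturbing incomparability — but yours does so by pure order combinatorics, using the second coordinate as a lexicographic tie-breaker, while the paper does so via nonstandard-analysis-style compactness. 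What your version buys is that it is more elementary (no saturated extensions, no infinitesimals, no auxiliary structure $(M,\mathbb{R},B)$) and more uniform with the paper's own treatment of weaves (\cref{prop:weave-iff-strong-weave}), where exactly such a reindexing-by-injection argument is used; the trade-off is that you need to verify the two small order-theoretic facts by hand, which you do correctly (the only nontrivial case being the tie $i = i'$ or $j = j'$, which the second lex component of $f$ is precisely designed to break). Your observation at the start that one may take $L = \mathbb{Q}^2$ by the compactness remark following \cref{defn:k-grid} is the right justification and is used implicitly by the paper as well.
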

\begin{proof}
  Assume that $T$ has an infinite $k$-grid $(b_{i,j} : i,j\in L)$ for $\varphi(x,y)$. Let $M$ be a model of $T$ containing a $k$-grid $(b_{i,j} : i,j \in \Rb)$ indexed by $(\Rb, <)$. Consider $(M,\Rb,B)$ be a structure (including the original structure on $M$ and the field structure on $\Rb$) with $B : \Rb^2 \to M$ satisfying $B(i,j) = b_{i,j}$. Let $(N,K,B')$ be an elementary extension of $(M,\Rb,B)$ such that $K$ contains an infinitesimal $\e > 0$. For each $i,j \in \Rb$, let $c_{i,j} = B'((1-\e)i,(1-\e)j)$. It is now easy to verify that for any finite chain $C \subseteq \Rb^2$, $\{((1-\e)i,(1-\e)j):(i,j) \in C\}$ is a strict chain in $K^2$. Likewise for any finite antichain $A \subseteq \Rb^2$, $\{((1-\e)i,(1-\e)j):(i,j) \in A\}$ is an antichain in $K^2$. Therefore $(c_{i,j} : i,j \in \Rb)$ is an infinite strong $k$-grid for $\varphi(x,y)$.

  The other direction is immediate.
\end{proof}

By a similar argument, we of course also get that any theory with an infinite $k$-grid for a formula $\varphi(x,y)$ has, for any linear order $L$, an array $(b_{i,j} : i,j \in L)$ satisfying that $\{\varphi(x,b_{i,j}) : (i,j) \in C\}$ is $k$-inconsistent for any chain $C \subseteq L^2$ and $\{\varphi(x,b_{i,j}) : (i,j) \in A\}$ is consistent for any antichain $A$.\footnote{We did not discuss this earlier but a similar phenomenon happens with weaves where the presence of a $(k,m,n)$-weave for $\varphi(x,y)$ of depth $\omega$ implies (by an argument similar to the proof of \cref{prop:weave-iff-strong-weave}) that there is a family $(b_\sigma : \sigma \in (2^2)^\omega)$ such that for any up-$n$-comb $C \subseteq (2^2)^\omega$, $\{\varphi(x,b_\sigma) : \sigma \in C\}$ is consistent and for any wide right-$m$-comb $C \subseteq (2^2)^\omega$, $\{\varphi(x,b_\sigma) : \sigma \in C\}$ is consistent. It is unclear if this is really a meaningful observation but both of these configurations have the property that there are two equivalent stronger versions (one favoring the consistent sets of parameters and one favoring the $k$-inconsistent) which both break the symmetry between consistency and $k$-inconsistency.}

One thing to note is that it is relatively easy to embed a $(k,\omega,\omega)$-weave into a $k$-grid.

\begin{figure}
  \centering
  \begin{tikzpicture}
    \clip (-8,-1.5) rectangle (6.5,7);
    \begin{scope}[scale=0.75,shift={(-10,0)}]
      \draw[line width=2.5pt] (0,0) -- (0,8) -- (8,8) -- (8,0) -- cycle;
      \draw[line width=2.5pt] (0,4) -- (8,4);
      \draw[line width=2.5pt] (4,0) -- (4,8);
      \foreach \i in {0,1} {
        \foreach \j in {0,1} {
          \begin{scope}[shift={(4*\i,4*\j)}]
            \draw[line width=1.4pt] (0,2) -- (4,2);
            \draw[line width=1.4pt] (2,0) -- (2,4);
            \foreach \ii in {0,1} {
              \foreach \jj in {0,1} {
                \begin{scope}[shift={(2*\ii,2*\jj)}]
                  \draw[line width=0.5pt] (0,1) -- (2,1);
                  \draw[line width=0.5pt] (1,0) -- (1,2);
                \end{scope}
              }
            }
          \end{scope}
        }
      }
      \coordinate (A0000) at (1.1,-0.1);
      \coordinate (A0100) at (3.1,-0.1);
      \coordinate (A1000) at (5.1,-0.1);
      \coordinate (A1100) at (7.1,-0.1);
      \coordinate (B0011) at (1.1,8.1);
      \coordinate (B0111) at (3.1,8.1);
      \coordinate (B1011) at (5.1,8.1);
      \coordinate (B1111) at (7.1,8.1);
    \end{scope}

    \begin{scope}[scale=0.75]
      \draw[line width=1pt] (0,0) -- node[left] {$\Zb$} (0,8) -- (8,8) -- (8,0) -- node[below] {$\Zb$} cycle;
      \begin{scope}[shift={(2.65,1.35)},scale=1/sqrt(2),rotate=45]
        \foreach \i in {0,1} {
          \foreach \j in {0,1} {
            \begin{scope}[shift={(6*\i,2*\j)}]
              \foreach \ii in {0,1} {
                \foreach \jj in {0,1} {
                  \begin{scope}[shift={(1.5*\ii,0.5*\jj)}]
                    \coordinate (C\i\ii\j\jj) at (1/4,-0.1);
                    \coordinate (D\i\ii\j\jj) at (1/4,1/4+0.1);
                    \foreach \iii in {0,1} {
                      \foreach \jjj in {0,1} {
                        \begin{scope}[shift={(1.5/4*\iii,0.5/4*\jjj)}]
                          \draw[line width=0.7pt] (0,0) -- (0.125,0) -- (0.125,0.125) -- (0,0.125) -- cycle;
                        \end{scope}
                      }
                    }
                  \end{scope}
                }
              }
            \end{scope} 
          }
        }
      \end{scope}
    \end{scope}
    \draw[->] (B0011) to [out=45,in=135] (D0011);
    \draw[->] (B0111) to [out=45,in=135] (D0111);
    \draw[->] (B1011) to [out=35,in=135] (D1011);
    \draw[->] (B1111) to [out=25,in=135] (D1111);
    \draw[->] (A0000) to [out=-45,in=-50] (C0000);
    \draw[->] (A0100) to [out=-45,in=-45] (C0100);
    \draw[->] (A1000) to [out=-35,in=-45] (C1000);
    \draw[->] (A1100) to [out=-25,in=-45] (C1100);
  \end{tikzpicture}
  \caption{The function $f$ in \cref{prop:k-weave-in-k-grid}. The image of any right-$\omega$-comb is a strict chain and the image of any up-$\omega$-comb is an antichain.}
  \label{fig:weave-to-grid}
\end{figure}
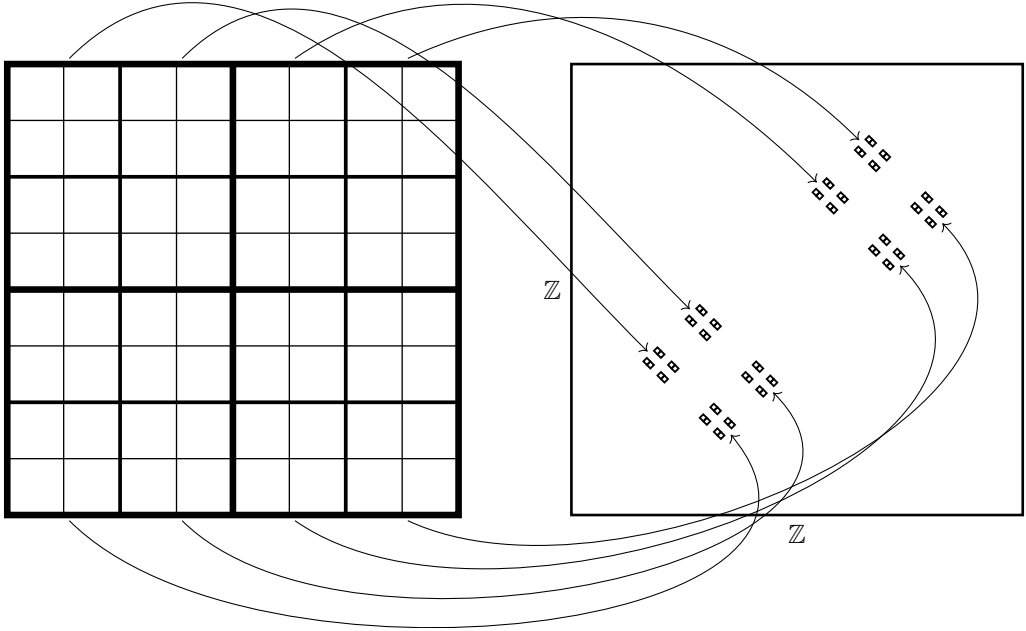

\begin{prop}\label{prop:k-weave-in-k-grid}
  For any $d$, there is a map $f : (2^2)^d \to \Zb^2$ with the property that any up-$\omega$-comb $C \subseteq (2^2)^d$ is an antichain and any right-$\omega$-comb $C \subseteq (2^2)^d$ is a strict chain.
\end{prop}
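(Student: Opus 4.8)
The plan is to realize $f$ as a pair of ``base-$4$ encodings'' of $\sigma$, one per coordinate of $\Zb^2$, built from two carefully chosen bijections $2^2\to\{0,1,2,3\}$, and then to verify the two required properties by structural induction on the (finite) inductive definitions of up-$\omega$-combs and right-$\omega$-combs, in the same style as the proof of \cref{prop:weave-iff-strong-weave}.

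First I would fix bijections $D_a,D_b:2^2\to\{0,1,2,3\}$ with the following two properties: (P1) $D_a(0,j)<D_a(1,j')$ and $D_b(0,j)<D_b(1,j')$ for all $j,j'<2$; and (P2) $D_a(i,0)>D_a(i,1)$ while $D_b(i,0)<D_b(i,1)$ for all $i<2$. Concretely $D_a(i,j)=2i+(1-j)$ and $D_b(i,j)=2i+j$ work, and in fact (P1) and (P2) determine $D_a$ and $D_b$ uniquely. Setting $N(\delta)=\sum_{n<d}\delta(n)4^{d-1-n}$ for $\delta\in\{0,1,2,3\}^d$, I define $f(\sigma)=\bigl(N(D_a\circ\sigma),\,N(D_b\circ\sigma)\bigr)\in\Zb^2$. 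The one computation to record is the elementary fact that if $\delta,\delta'\in\{0,1,2,3\}^d$ agree on $\{0,\dots,\ell-1\}$ and $\delta(\ell)\neq\delta'(\ell)$, then $N(\delta)<N(\delta')$ iff $\delta(\ell)<\delta'(\ell)$ (the tail $\sum_{n>\ell}(\delta(n)-\delta'(n))4^{d-1-n}$ has absolute value at most $4^{d-1-\ell}-1$, so it cannot flip the sign of the leading term $(\delta(\ell)-\delta'(\ell))4^{d-1-\ell}$). Since $D_a$ and $D_b$ are injective, it follows that whenever $\sigma\neq\sigma'$ with greatest common initial segment $\tau$, neither coordinate of $f(\sigma)$ equals the corresponding coordinate of $f(\sigma')$---so $f$ is injective---and the order between $f(\sigma)$ and $f(\sigma')$ in the first (resp.\ second) coordinate of $\Zb^2$ is governed by $D_a(\sigma(\ell))$ versus $D_a(\sigma'(\ell))$ (resp.\ by $D_b$), where $\ell=\top(\dom(\tau))$ is the position at which $\sigma$ and $\sigma'$ first differ.

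Now I would induct on the definitions of up-$\omega$- and right-$\omega$-combs; the base case of a singleton is trivial. Suppose $C=A\cup B$ where $A$ and $B$ are up-$\omega$-combs and $A$ is narrowly below $B$, witnessed by $\tau$ and $i<2$. Then for every $\sigma\in A$ and $\sigma'\in B$, $\tau$ is the greatest common initial segment of $\sigma$ and $\sigma'$, with $\sigma(\ell)=(i,0)$ and $\sigma'(\ell)=(i,1)$ where $\ell=\top(\dom(\tau))$; by (P2), $f(\sigma)$ and $f(\sigma')$ are incomparable in $\Zb^2$ (they compare oppositely in the two coordinates). Together with the inductive hypothesis that $f``[A]$ and $f``[B]$ are antichains and the injectivity of $f$, this shows $f``[C]$ is an antichain. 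Suppose instead $C=A\cup B$ with $A,B$ right-$\omega$-combs and $A$ narrowly to the left of $B$, witnessed by $\tau$ and $j<2$. Then for $\sigma\in A$, $\sigma'\in B$ we get $\sigma(\ell)=(0,j)$ and $\sigma'(\ell)=(1,j)$, and (P1) gives $f(\sigma)<f(\sigma')$ in the product order; with the inductive hypothesis that $f``[A]$ and $f``[B]$ are strict chains, $f``[C]$ is then a strict chain. As $(2^2)^d$ is finite, this handles all up-$\omega$-combs and all right-$\omega$-combs, completing the proof.

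The only part of this that is not pure bookkeeping is the choice of $D_a$ and $D_b$: one needs a pair of bijections $2^2\to\{0,1,2,3\}$ under which a split of the first coordinate is monotone the same way in both encodings (forcing comparability of the images) while a split of the second coordinate is monotone oppositely in the two encodings (forcing incomparability of the images), and (P1)--(P2) are exactly this requirement. After that the argument is the familiar ``the earliest split decides everything'' induction already used in \cref{lem:foo-truncate} and \cref{prop:weave-iff-strong-weave}, so I expect no real obstacle. I would also remark that running the identical induction on the definition of wide right-$\omega$-combs shows that $f``[C]$ is a strict chain for every wide right-$\omega$-comb $C$, which is slightly stronger than the stated conclusion.
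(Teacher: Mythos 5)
Your proof is correct, and it takes a genuinely different route from the paper's. The paper proves the proposition by induction on $d$: given $f_d$, it picks a translation vector $(n,-n)$ large enough to force incomparability between the original image and the shifted copy (giving the new up-split), and then a diagonal shift $(m,m)$ large enough to force comparability between the resulting two halves (giving the new right-split); the map $f_{d+1}$ is assembled from four such translated copies of $f_d$. Your argument instead writes down a closed-form $f$ at once---two base-$4$ encodings of $\sigma$ under the bijections $D_a$, $D_b$ chosen so that the two coordinates agree on the ordering at a $(0,j)$-versus-$(1,j)$ split (property (P1)) and disagree at an $(i,0)$-versus-$(i,1)$ split (property (P2))---and then verifies both required properties by structural induction on the recursive definition of finite combs, using the ``earliest disagreement decides the order'' fact for base-$4$ integers. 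Both proofs are correct; yours is more explicit and avoids the choice of growth constants, while the paper's is arguably shorter to state. Your closing remark that the same verification applies verbatim to wide right-$\omega$-combs is also correct (the witness $\sigma$ of wideness is necessarily the greatest common initial segment of any cross-pair, and (P1) alone does the work there), and this slightly stronger conclusion would also follow from the paper's construction, though the paper does not record it.

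One small editorial note, not a flaw in your argument: the paper's own proof writes $\sigma \concat (1,i)$, which per \cref{defn:L-sequence} means appending, but the construction only works as intended if this is read as prepending $(1,i) \concat \sigma$, matching the convention used in the proof of \cref{thm:failure-of-Kim-bi-invariant-to-weaves}. Your proof sidesteps this notational issue entirely.
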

\begin{proof}
  This is obvious for $d = 1$. Assume that we have such a map $f_d : (2^2)^d \to \Zb^2$ for some $d < \omega$. Find an $n$ large enough that for any $\sigma,\tau \in (2^2)^d$, $f_d(\sigma)$ and $f_d(\tau) +(n,-n)$  are incomparable in the product order on $\Zb$. Use this to extend $f_d$ to a map $f_{d+\nicefrac{1}{2}} : \{\sigma \concat (1,i) : \sigma \in (2^2)^d,~i < 2\}$ that still satisfies the required condition. Now find an $m$ large enough that for any $\sigma,\tau \in (2^2)$ and $i,j < 2$, $f_{d + \nicefrac{1}{2}}(\sigma \concat (1,i)) < f_{d + \nicefrac{1}{2}}(\tau \concat (1,j)) + (n,n)$. Use this to extend $f_{d + \nicefrac{1}{2}}$ to a function $f_{d+1}$ on all of $(2^2)^{d+1}$ satisfying the required condition.
\end{proof}

One might hope (as the author did) that the converse of \cref{prop:k-weave-in-k-grid} is also true, allowing us to show that the existence of $k$-grids is equivalent to the existence of $(k,\omega,\omega)$-weaves. Unfortunately this seems unlikely to follow directly. In the specific case of $2$-grids, we can see some evidence that this implication is unlikely by noting that the graph $(L^2, \{\{a,b\} \subseteq L^2 : a < b \vee b < a \})$ contains $P_4$ for any sufficiently large finite $L$ and so is not a cograph. This isn't conclusive of course. Given the general shape of these two conditions, namely the fact that weaves are naturally indexed by a pair of trees and grids are naturally indexed by a pair of sequences, the treelessness of \cite{Kaplan_2024} may be relevant.

\begin{quest}
  What is the relationship between theories that do not have an infinite $k$-grid for some $k < \omega$ and theories that do not admit $(k,\omega,\omega)$-weaves of depth $\omega$? Do treeless theories with $(k,\omega,\omega)$-weaves of depth $\omega$ always have infinite $k$-grids?
\end{quest}

For the remainder of the section, fix a first-order theory $T$ and $k < \omega$ such that $T$ has an infinite $k$-grid. Fix a $k$-grid $(b_{i,j} : i,j < \omega)$ for some formula $\varphi(x,y)$.

Fix a model $M \models T$ containing $(b_{i,j} : i,j < \omega)$ with $|M| \leq |T|$. Expand $M$ to the structure $(M,G, <_0,<_1,<,(b_{i,j} : i < \omega))$, where $G$ is a unary predicate selecting out the set $\{b_{i,j} : i,j < \omega\}$, $<_0$ and $<_1$ are the linear orders on the coordinates in $G$, $<$ is the product partial order on $G$, and $b_{i,j}$ is a constant for the element $b_{i,j}$. Let $T' = \Th(M,G, <_0,<_1,<,(b_{i,j} : i < \omega))$.

Given any $A \subseteq \omega^2$ and $n < \omega$, write $A(n)$ for the set $\{m < \omega : (n,m) \in A\}$. Let $\Fc$ be the filter on $\omega^2$ generated by sets $A \subseteq \omega^2$ satisfying that $\{n < \omega : A(n)~\text{is cofinite}\}$ is cofinite. Let $\Uc$ be an ultrafilter on $\omega^2$ extending $\Fc$. Let $q(y)$ be the global coheir generated by $\Uc$.

\begin{prop}\label{prop:q-doesn't-Kim-divide}
  $\varphi(x,y)$ does not divide along $q$.
\end{prop}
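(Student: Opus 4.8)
The plan is to compute directly with the Morley sequence that $q$ generates over $M$ and show that feeding it into $\varphi$ gives a consistent set of formulas. Write $(b_l : l < \omega)$ for this Morley sequence, so $b_l \models q \res Mb_{<l}$ for each $l$. By compactness, $\{\varphi(x,b_l) : l < \omega\}$ is consistent once we know that for every $n < \omega$ the formula $\theta_n(y_0,\dots,y_{n-1}) := \exists x \bigwedge_{l<n}\varphi(x,y_l)$ lies in $\tp(b_0,\dots,b_{n-1}/M)$. Since $q$ is the global coheir generated by the ultrafilter $\Uc$ on $\omega^2$, the standard description of the type of a coheir Morley sequence (provable by a routine induction on $n$) gives that $\models \theta_n(b_0,\dots,b_{n-1})$ if and only if the iterated $\Uc$-limit
\[
\lim_{i_{n-1}\to\Uc}\ \lim_{i_{n-2}\to\Uc}\ \cdots\ \lim_{i_0\to\Uc}\ \theta_n(b_{i_0},\dots,b_{i_{n-1}})
\]
equals $1$, where the \emph{outermost} limit ranges over the index of the \emph{most generic} element $b_{n-1}$ and the innermost over that of $b_0$. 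So the proposition reduces to evaluating this iterated limit.

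The key point is that for each $(a,b)\in\omega^2$ the ``upper-right'' cone $C_{(a,b)}:=\{(c,d)\in\omega^2 : c>a,\ d>b\}$ belongs to $\Fc$, hence to $\Uc$: its $c$-th column is cofinite for $c > a$ and empty for $c\le a$, so the set of $c$ with cofinite $c$-th column is cofinite. In particular, for any $j\in\omega^2$ the set $\{i\in\omega^2 : i > j\}$ is in $\Uc$. Using this I will show by induction on $n$ that the iterated $\Uc$-limit of the indicator of $E_n := \{(i_0,\dots,i_{n-1})\in(\omega^2)^n : i_0 > i_1 > \cdots > i_{n-1}\}$ is $1$: performing the innermost limit over $i_0$ replaces the indicator of $E_n$ by the indicator of $E_{n-1}$ (once $i_1 > \cdots > i_{n-1}$, the remaining condition on $i_0$ is just $i_0 > i_1$, which holds on a $\Uc$-large set), and the inductive hypothesis then finishes the computation; the base case $n = 1$ is $\omega^2 \in \Uc$.

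Finally, on $E_n$ the indices $i_0,\dots,i_{n-1}$ are distinct and pairwise comparable in the product order, so $\{i_0,\dots,i_{n-1}\}$ is a strict chain in $\omega^2$; by the defining property of a $k$-grid (\cref{defn:k-grid}), $\{\varphi(x,b_{i_l}) : l < n\}$ is consistent, i.e.\ $\models\theta_n(b_{i_0},\dots,b_{i_{n-1}})$. Thus the indicator of $E_n$ is pointwise below the indicator of the set on which $\theta_n$ holds, so by monotonicity of iterated $\Uc$-limits the latter also evaluates to $1$, as needed, and $\{\varphi(x,b_l) : l<\omega\}$ is consistent. The step I expect to require the most care is the bookkeeping in the iterated-limit description of the coheir Morley sequence---specifically making sure that the outermost limit is taken over the most generic coordinate, so that it is the \emph{decreasing} chains in $\omega^2$ that survive the limit and hence the consistency (rather than $k$-inconsistency) clause of the $k$-grid that gets used; the cone computation and the induction are otherwise routine.
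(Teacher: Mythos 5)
Your proof is correct, and it does take a genuinely different route from the paper. The paper's argument is a two-line appeal to elementarity in the expanded structure: since the upper-right cone above each $b_{i,j}$ is in $\Fc\subseteq\Uc$, one has $q(y)\vdash b_{i,j}<y$, so any Morley sequence generated by $q$ is strictly $<$-decreasing, and $T'$ knows (as a first-order scheme over the predicate $G$) that finite strictly $<$-decreasing sequences in $G$ give consistent $\varphi$-instances. Your argument instead stays at the level of the finite iterated $\Uc$-limit description of the coheir Morley sequence, shows that the indicator of the set $E_n$ of strictly decreasing $n$-tuples has iterated limit $1$ because upper cones are $\Uc$-large, and then pushes $E_n$ through to $\theta_n$ by monotonicity and the chain clause of \cref{defn:k-grid}. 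The underlying combinatorial input is identical (cones are $\Fc$-large, and strictly decreasing chains in the grid give consistency); what differs is the delivery mechanism: the paper packages it via elementarity in $T'$, which is shorter once the scaffolding is in place, while your computation is more explicit and self-contained, not needing to phrase anything as a first-order scheme in the expanded language. You also correctly handled the one genuinely delicate point you flagged, namely that the outermost limit must range over the index of the most generic (last) element of the Morley sequence: unwinding $b_{n-1}\models q\res Mb_{<n-1}$ peels off $i_{n-1}$ first, so your iterated-limit expression has the right orientation and it really is the decreasing chains (hence the consistency clause, not the $k$-inconsistency clause) that survive.
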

\begin{proof}
  $T'$ knows that for any finite strictly $<$-decreasing sequence $(c_i : i < n)$ in $G$, $\{\varphi(x,c_i) : i < n\}$ is consistent. Since $q(y)\vdash b_{i,j} < y$ for every $i,j < \omega$, we have that any Morley sequence generated by $q(y)$ is strictly $<$-decreasing. Therefore $\varphi(x,y)$ does not divide along $q$.
\end{proof}

Fix $\kappa \geq |T|$ and fix a $\kappa^+$-saturated model $O$ containing $(b_{i,j} : i,j < \omega)$. If $\kappa^+ = 2^\kappa$, choose $O$ so that $|O| = \kappa^+$. Let $\Elem_\kappa(O)$ be the set of elementary submodels of $O$ of size at most $\kappa$.

\begin{lem}\label{lem:build-sideways-elements}
  For any $N \in \Elem_\kappa(O)$, formula $\psi(y) \in q(y)$ with parameters in $N$, there is an elementary extension $N' \succeq N$ in $\Elem_\kappa(O)$ such that there is a $c \in G(N')\cap \psi(N')$ $b_{0,j} <_1 c$ for all $j < \omega$.
\end{lem}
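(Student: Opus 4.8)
The plan is to produce the required witness $c$ first as a realization, inside $O$, of a suitable partial type, and then to cut down to a small elementary submodel containing $N$ and $c$. Let $\Sigma(y)$ be the partial type $\{G(y),\psi(y)\}\cup\{b_{0,j}<_1 y : j<\omega\}$. Every parameter of $\Sigma(y)$ lies in $N$ --- the $b_{0,j}$ are constants of $T'$ --- and $|N|\leq\kappa$, so because $O$ is $\kappa^+$-saturated it is enough to check that $\Sigma(y)$ is finitely satisfiable in $O$. Since $b_{0,j}<_1 b_{0,j'}$ for $j<j'$, a finite subset of $\Sigma(y)$ is implied, for some $j_0<\omega$, by $\{G(y),\psi(y),b_{0,j_0}<_1 y\}$, so I only need to exhibit a grid element $c\in G(O)$ with $O\models\psi(c)$ and $b_{0,j_0}<_1 c$.

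This is the one place where the choice of $\Uc$ matters. Because $q$ is the coheir generated by $\Uc$ and is finitely satisfiable in the standard grid $\{b_{i,j}:i,j<\omega\}$, the hypothesis $\psi(y)\in q(y)$ says exactly that $S:=\{(i,j)\in\omega^2 : O\models\psi(b_{i,j})\}\in\Uc$. On the other hand, the ``tail'' set $A_{j_0}:=\{(i,j)\in\omega^2 : j>j_0\}$ has $A_{j_0}(n)=\{m<\omega : m>j_0\}$ cofinite for every $n<\omega$, so $A_{j_0}$ is one of the generators of $\Fc$ and therefore $A_{j_0}\in\Fc\subseteq\Uc$. Hence $S\cap A_{j_0}\in\Uc$ is nonempty; choosing $(i,j)\in S\cap A_{j_0}$ and setting $c:=b_{i,j}$ gives $O\models G(c)\wedge\psi(c)$ and $b_{0,j_0}<_1 c$ (since $j_0<j$). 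This establishes finite satisfiability.

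By $\kappa^+$-saturation, $\Sigma(y)$ is realized by some $c\in O$ (note that $c$ then necessarily has $<_1$-nonstandard ``height''). By downward L\"owenheim--Skolem, using $|N|+|T|\leq\kappa$, pick $N'\preceq O$ with $N\cup\{c\}\subseteq N'$ and $|N'|\leq\kappa$; then $N'\in\Elem_\kappa(O)$ and $N'\succeq N$. Since $N'\preceq O$ and $c\in N'$, we get $c\in G(N')\cap\psi(N')$ and $b_{0,j}<_1 c$ for every $j<\omega$, as desired. There is no serious obstacle here; the only point worth flagging is that, although $\Uc$ is merely some ultrafilter extending $\Fc$, the particular sets $A_{j_0}$ already lie in $\Fc$, and this is exactly what forces $\psi$ to be satisfied by grid points of arbitrarily large $<_1$-height, hence --- after saturation --- by one of nonstandard height.
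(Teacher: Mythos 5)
Your proof is correct and follows essentially the same route as the paper's: both deduce from $\psi(y)\in q(y)$ that $\psi$ is satisfied by grid points of unboundedly large $<_1$-height, then invoke $\kappa^+$-saturation of $O$ to produce the witness $c$ and downward L\"owenheim--Skolem to cut down to $N'\in\Elem_\kappa(O)$. The only difference is cosmetic: you realize the explicit partial type $\Sigma(y)$ directly by checking finite satisfiability via $\Fc\subseteq\Uc$, whereas the paper fixes a single column $\{b_{\ell',j}:j\in X\}$ with $X\subseteq\omega$ infinite and takes $c$ to realize a coheir of that column over $N$ --- your version is a bit more economical, avoiding the small extra argument that such a coheir realization necessarily lies $<_1$-above every $b_{0,j}$.
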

\begin{proof}
  Since $\psi(y) \in q(y)$, we know that $G(y) \wedge \psi(y) \in q(y)$ as well. This implies that $\{(i,j) \in \omega^2 : N \models \psi(b_{i,j})\} \in \Uc$, so it must be the case that for infinitely many $i < \omega$, there are infinitely many $j < \omega$ such that $N^\ast \models \psi(b_{i,j})$. Fix some such $\ell'$ with $\ell' > \ell$. Let $X \subseteq \omega$ be the set of $j$ such that $N \models \psi(b_{\ell',j})$. By compactness and downward L\"{o}wenheim-Skolem, we can find an elementary extension $N' \succeq N$ in $\Elem_\kappa(O)$ such that there is a $c \in N' \setminus N$ with $\tp(c/N)$ finitely satisfiable in $\{b_{\ell,j} : j \in X\}$. It is immediate that $b_{0,j} <_1 c$ for all $j < \omega$.
\end{proof}

Several different sets of elementary extensions in $\Elem_\kappa(O)$ are generic (in the sense of \cref{defn:generic-above}, thinking of $\Elem_\kappa(O)$ as a posetal category):
\begin{itemize}
\item \cref{lem:build-sideways-elements} implies that for any $N \in \Elem_\kappa(O)$ and formula $\psi(y)$ with parameters in $N$, the set of elementary extensions $N' \succeq N$ satisfying the conclusion of \cref{lem:build-sideways-elements} is generic above $N$. Specifically, given an elementary extension $N^\ast \succeq N $ in $\Elem_\kappa(O)$ and a formula $\psi(y) \in q(y)$ with parameters in $N$, we can just apply \cref{lem:build-sideways-elements} to $N^\ast$ with the same choice of $\psi(y)$ to get an elementary extension $N' \succeq  N^\ast \succeq N$.
\item It is also easy to show that the set of elementary extensions $N' \succeq N$ in $\Elem_\kappa(O)$ satisfying that $q \res N$ is realized in $N'$ is generic above $N$.
\item For any $a \in O$, the set of elementary extensions $N' \succeq N$ in $\Elem_\kappa(O)$ with $a \in N'$ is generic above $N$ (although this will be too many requirements if $|O| > \kappa^+$).
\end{itemize}
By a standard argument (which could be thought of as an instance of \cref{prop:Baire-category-category}), we can build a continuous elementary chain $(N_i : i < \kappa^+)$ in $\Elem_\kappa(O)$ and a sequence $(e_i : i < \kappa^+)$ of elements of $O$ such that
\begin{itemize}
\item for every $i < \kappa^+$ and formula $\psi(y) \in q(y)$ with parameters in $N_i$, there is an $\alpha(i,\psi) < \kappa^+$ satisfying that for some $c_{\alpha(i,\psi)} \in G(N_{\alpha(i,\psi)})\cap \psi(N_{\alpha(i,\psi)})$, $b_{0,j} <_1 c_{\alpha(i,\psi)} <_1 c$ for all $j < \omega$,
\item for every $i < \kappa^+$, there is a $\e(i) < \kappa^+$ such that $e_i \in N_{\e(i)}$ and $e_i \models q \res N_i$, and
\item if $|O| = \kappa^+$, every $a \in O$ is an element of $N_i$ for some $i < \kappa^+$.
\end{itemize}
Let $C$ be the set of $\beta < \kappa^+$ with the property that
\begin{itemize}
\item for any $i < \beta$, $\psi(y) \in q(y)$ with parameters in $N_i$, $\alpha(i,\psi) < \beta$ and
\item for any $i < \beta$, $\e(i) < \beta$.
\end{itemize}
Since $\kappa^+$ is a regular cardinal and since $|N_i| \leq \kappa$ for all $i < \beta$, $C$ is a club in $\kappa^+$. Note that $(e_i : i \in C)$ is a Morley sequence generated by $q(y)$ and so $\{\varphi(x,e_i) : i \in C\}$ is consistent by \cref{prop:q-doesn't-Kim-divide}. Let $r(x)$ be some complete type over $O$ extending $\{\varphi(x,e_i) : i \in C\}$.

For each $\beta \in C$, let $H_\beta$ be the set of $c_{\alpha(i,\psi)}$ for $i < \beta$ and $\psi(y) \in q(y)$ with parameters in $N_i$. Note that by construction, $q \res N_\beta$ is finitely satisfiable in $H_\beta$ for any $\beta \in C$.

Let $\Fc_\beta$ be the filter on $H_\beta$ generated by the set of $d \in H_\beta$ such that $N_\beta \models \psi(d)$ for $\psi(y) \in q \res N_\beta$.

\begin{lem}\label{lem:down-right}
  Fix a $\beta \in C$. For every $c \in H_\beta$, there is an $A \in \Fc_\beta$ such that for any $d \in A$, $c <_0 d$ and $d <_1 c$.
\end{lem}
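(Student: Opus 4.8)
The plan is to exhibit the required $A$ explicitly, namely the trace on $H_\beta$ of the single $N_\beta$-formula $\psi_c(y)\coloneqq (c <_0 y)\wedge (y <_1 c)$. Note first that this formula really does have its parameter $c$ in $N_\beta$: since $c\in H_\beta$ we have $c=c_{\alpha(i,\psi)}$ for some $i<\beta$ and some $\psi(y)\in q\res N_i$, and because $\beta\in C$ we get $\alpha(i,\psi)<\beta$, so $c\in N_{\alpha(i,\psi)}\preceq N_\beta$. Given this, everything reduces to checking that $\psi_c(y)\in q\res N_\beta$: by definition $\Fc_\beta$ is the filter on $H_\beta$ generated by the sets $\{d\in H_\beta : N_\beta\models\theta(d)\}$ for $\theta(y)\in q\res N_\beta$ (a proper filter base, since $q\res N_\beta$ is finitely satisfiable in $H_\beta$), so once we know $\psi_c\in q\res N_\beta$ the set $A\coloneqq\{d\in H_\beta : N_\beta\models\psi_c(d)\}$ is itself one of these generators, hence $A\in\Fc_\beta$, and by construction every $d\in A$ satisfies $c <_0 d$ and $d <_1 c$ (the truth of the quantifier-free relations $<_0,<_1$ on fixed elements being absolute).

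So the whole content is the assertion $\psi_c(y)\in q(y)$, and for this the relevant fact about $c$ is the shape of the witnesses produced by \cref{lem:build-sideways-elements}. In the proof of that lemma $c$ is obtained with $\tp(c/N)$ finitely satisfiable in a single ``column'' $\{b_{\ell',j} : j\in X\}$ of the grid, where $\ell'<\omega$ is a (standard) natural number and $X\subseteq\omega$ is infinite; hence $c$ has the same first $<_0$-coordinate as $b_{\ell',0}$, while (as recorded in \cref{lem:build-sideways-elements} itself) $b_{0,j} <_1 c$ for every $j<\omega$, i.e.\ the second $<_1$-coordinate of $c$ strictly dominates that of every $b_{i,j}$. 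Therefore $N_\beta\models b_{i,j} <_1 c$ for \emph{all} $(i,j)\in\omega^2$, and $N_\beta\models c <_0 b_{i,j}$ exactly when $\ell'<i$, so
\[
  \{(i,j)\in\omega^2 : N_\beta\models\psi_c(b_{i,j})\}=\{(i,j)\in\omega^2 : \ell'<i\}.
\]
For this right-hand set, the $n$-th section is $\omega$ if $n>\ell'$ and $\varnothing$ otherwise, so the set of $n$ with cofinite section is cofinite; thus the right-hand set lies in $\Fc\subseteq\Uc$, and since $q$ is the global coheir generated by $\Uc$ via $(b_{i,j})$ we conclude $\psi_c(y)\in q(y)$, hence $\psi_c(y)\in q\res N_\beta$. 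This finishes the argument.

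The one slightly delicate point I would flag is the claim that $c$ has a \emph{standard} first $<_0$-coordinate: this is not part of the displayed statement of \cref{lem:build-sideways-elements}, but it is immediate from the proof given there, where the witness is produced by finite satisfiability inside a single grid-column $\{b_{\ell',j}: j\in X\}$ with $\ell'$ a fixed natural number. (It would be cleanest to record this explicitly in the statement of \cref{lem:build-sideways-elements}, since it is reused here.) Everything else is just unwinding the definitions of $q$, $\Uc$, $\Fc$, and $\Fc_\beta$, so there is no real obstacle.
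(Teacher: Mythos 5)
Your proof is correct and essentially the same as the paper's: both produce $A$ as the trace on $H_\beta$ of a single formula of $q\res N_\beta$ (you use $c <_0 y \wedge y <_1 c$ directly, while the paper routes through a formula of the form $b_{i,0} <_0 y \wedge y <_1 c$, deriving $c <_0 d$ via transitivity through $c <_0 b_{i,0}$), and both ultimately rely on the same underlying fact that any element of $H_\beta$ has a standard $<_0$-coordinate. The subtle point you flag---that this last fact is established in the proof of \cref{lem:build-sideways-elements} rather than recorded in its statement---is a fair observation and applies equally to the paper's own argument, which invokes ``there is $i<\omega$ with $c <_0 b_{i,0}$'' for the same reason.
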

\begin{proof}
  Since $c \in H_\beta$, there is a $i < \omega$ such that $c <_0 b_{i,0}$. We have that $b_{0,0} <_0 y \wedge y <_1 c$ is  a formula in $q \res N_\beta$, so the required $A$ exists by construction.
\end{proof}

Let $\Vc_\beta$ be an ultrafilter on $H_\beta$ extending $\Fc_\beta$. Let $p_\beta(y)$ be a global coheir generated by $\Vc_\beta$. Note that by construction, we have that $q\res N_\beta = p_\beta \res N_\beta$.

\begin{prop}
  For each $\beta \in C$, $\varphi(x,y)$ $k$-divides along $p_\beta$.
\end{prop}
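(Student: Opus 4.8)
The plan is to build a Morley sequence generated by $p_\beta$ that forms an antichain inside $G$, and then conclude $k$-inconsistency of the corresponding instances of $\varphi$ from the $k$-grid property of $(b_{i,j})$, which is part of $T'$ (so it is available wherever the Morley sequence lives): any $k$ pairwise $<$-incomparable elements of $G$ yield a $k$-inconsistent set of instances of $\varphi(x,y)$.

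First I would record the key consequence of \cref{lem:down-right}: for every $c \in H_\beta$, the formula $c <_0 y \wedge y <_1 c$ belongs to $p_\beta$. Indeed, \cref{lem:down-right} gives an $A \in \Fc_\beta \subseteq \Vc_\beta$ with $A \subseteq \{d \in H_\beta : c <_0 d \wedge d <_1 c\}$, so the latter set is in $\Vc_\beta$; since $c \in H_\beta \subseteq N_\beta$ and $p_\beta$ is the coheir generated by $\Vc_\beta$, the formula $c <_0 y \wedge y <_1 c$ is in $p_\beta$, hence in $p_\beta \res N_\beta = q \res N_\beta$. Consequently, any realization $d \models p_\beta \res N_\beta$ satisfies $c <_0 d$ and $d <_1 c$ for every $c \in H_\beta$; informally, $d$ lies strictly below and to the right of all of $H_\beta$.

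Now let $(d_\ell : \ell < \omega)$ be a Morley sequence generated by $p_\beta$ over $N_\beta$ (this makes sense, as $p_\beta$ is finitely satisfiable in $H_\beta \subseteq N_\beta$, hence $N_\beta$-invariant; also $G(y) \in p_\beta$, so each $d_\ell \in G$). Each $d_\ell$ realizes $p_\beta \res N_\beta$, so by the previous paragraph $c <_0 d_\ell$ and $d_\ell <_1 c$ for all $c \in H_\beta$ and all $\ell$. Fix $\ell < \ell'$. Then the formula $y <_0 d_\ell \wedge d_\ell <_1 y$ lies in $p_\beta$, because its set of realizations in $H_\beta$ is $\{c \in H_\beta : c <_0 d_\ell \wedge d_\ell <_1 c\}$, which by the displayed property of $d_\ell$ is all of $H_\beta \in \Vc_\beta$. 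This formula has parameters in $N_\beta d_\ell \subseteq N_\beta d_{<\ell'}$ and $d_{\ell'} \models p_\beta \res N_\beta d_{<\ell'}$, so $d_{\ell'} <_0 d_\ell$ and $d_\ell <_1 d_{\ell'}$. Hence any two members of $(d_\ell : \ell < \omega)$ are $<$-incomparable (and so distinct), i.e., $\{d_\ell : \ell < \omega\}$ is an infinite antichain in $G$. By the $k$-grid axiom of $T' = \Th(M,G,<_0,<_1,<,(b_{i,j}:i<\omega))$, every $k$-element subset of $\{\varphi(x,d_\ell):\ell<\omega\}$ is inconsistent, so $\{\varphi(x,d_\ell):\ell<\omega\}$ is $k$-inconsistent and $\varphi(x,y)$ $k$-divides along $p_\beta$.

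The main point to get right is the directionality: one must notice that \cref{lem:down-right} forces a realization of $p_\beta\res N_\beta$ to sit below-and-right of all of $H_\beta$, and hence forces every later term of the Morley sequence to sit above-and-left of all earlier terms — so that the two coordinate directions genuinely conflict and give incomparability, rather than accidentally lining up into a chain. The only other care needed is the routine bookkeeping that $H_\beta \subseteq N_\beta$ (so formulas with parameters from $H_\beta$ are $N_\beta$-formulas and the coheir computations are legitimate) and that the $k$-inconsistency of $\varphi$-instances on antichains of $G$ is genuinely a first-order consequence of $T'$.
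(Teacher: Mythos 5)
Your proof is correct and takes essentially the same approach as the paper: apply \cref{lem:down-right} to place every realization of $p_\beta \res N_\beta$ strictly below (in $<_1$) and strictly to the right (in $<_0$) of all of $H_\beta$, then use the coheir property (large set in $\Vc_\beta$) to propagate this along a Morley sequence, yielding a $<_0$-decreasing, $<_1$-increasing sequence in $G$, hence a $<$-antichain, and finish with the first-order $k$-grid axiom of $T'$. The paper's write-up is terser but identical in substance; your version usefully spells out the coheir computation showing $y <_0 d_\ell \wedge d_\ell <_1 y \in p_\beta$ (since the set of realizations in $H_\beta$ is all of $H_\beta \in \Vc_\beta$), which is the step the paper leaves implicit.
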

\begin{proof}
  \cref{lem:down-right} implies that for any $d \models p_\beta \res N_\beta$, we have that $c <_0 d$ and $d <_1 c$ for all $c \in H_\beta$. Therefore for any Morley sequence $(d_i : i < \omega)$ generated by $p_\beta$, we have that $d_{i+1} <_0 d_{i}$ and $d_i <_1 d_{i+1}$. $T'$ knows that $\varphi(x,y)$ is $k$-inconsistent on any set of elements of $G$ satisfying this condition, so $\varphi(x,y)$ $k$-divides along $p_\beta$.
\end{proof}

\begin{thm}\label{thm:grid-theorem}
  If $T$ has an infinite $k$-grid for $\varphi(x,y)$, then
  \begin{enumerate}
  \item\label{grid-c-shc} $T$ fails to satisfy \textup{($k$, coheir, strong heir-coheir)}--Kim's lemma over models,
  \item\label{grid-shc-c} $T$ fails to satisfy \textup{($k$, strong heir-coheir, coheir)}--Kim's lemma over models,
  \item\label{grid-weave} $T$ has a $(k,\omega,\omega)$-weave of depth $\omega$,
  \item\label{grid-hc-hc} $T$ fails to satisfy \textup{($k$, heir-coheir, heir-coheir)}--Kim's lemma over models, and
  \item\label{grid-gslc} if $\mathsf{GCH}$ holds, then $T$ fails to satisfy generic stationary local character.
  \end{enumerate}
\end{thm}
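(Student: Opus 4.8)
The plan is to read off all five conclusions from the apparatus already set up above: the expanded theory $T'$, the $N_0$-invariant coheir $q$ along which $\varphi(x,y)$ does not divide (\cref{prop:q-doesn't-Kim-divide}), the continuous elementary chain $(N_i : i < \kappa^+)$ together with the sequence $(e_i : i \in C)$, the complete type $r \supseteq \{\varphi(x,e_i) : i \in C\}$, and, for each $\beta \in C$, the $N_\beta$-coheir $p_\beta$ along which $\varphi(x,y)$ $k$-divides. Most of this is organizational; the one substantive new ingredient is that $q$ and each $p_\beta$ can be taken to be \emph{strong} $N_\beta$-heir-coheirs.

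Conclusion (\ref{grid-weave}) is the cheapest: an easy compactness argument upgrades the infinite $k$-grid to one indexed by $(\Zb,<)$, and for each $d<\omega$ the map $f\colon (2^2)^d \to \Zb^2$ of \cref{prop:k-weave-in-k-grid} pulls it back to a $(k,\omega,\omega)$-weave for $\varphi(x,y)$ of depth $d$ (up-$\omega$-combs go to antichains, hence become $k$-inconsistent; right-$\omega$-combs go to strict chains, hence become consistent), so \cref{prop:finite-depth-to-depth-omega} yields a $(k,\omega,\omega)$-weave of depth $\omega$. Conclusion (\ref{grid-hc-hc}) then follows from (\ref{grid-weave}): since every up-$1$-comb is an up-$\omega$-comb and every right-$1$-comb is a right-$\omega$-comb, a $(k,\omega,\omega)$-weave of depth $\omega$ is in particular a $(k,1,1)$-weave of depth $\omega$, and \cref{prop:get-heir-coheir-heir-coheir-from-weave} converts this into a model, a formula, and two heir-coheirs witnessing the failure of $(k,$ heir-coheir, heir-coheir$)$-\KL{} over models.

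For (\ref{grid-c-shc}) and (\ref{grid-shc-c}), fix $\beta \in C$ and $b \models q\res N_\beta$. Then $p_\beta$ and $q$ are global coheirs over $N_\beta$ both extending $\tp(b/N_\beta) = q\res N_\beta = p_\beta \res N_\beta$, $\varphi(x,b)$ $k$-divides along $p_\beta$ (by the proposition proved just above) and does not divide along $q$ (\cref{prop:q-doesn't-Kim-divide}). It therefore suffices to check that, in the reduct to the language of $T$, both $q$ and $p_\beta$ are strong $N_\beta$-heir-coheirs, i.e.\ that $q^{\otimes n}$ and $p_\beta^{\otimes n}$ are $N_\beta$-heir-coheirs for all $n<\omega$: taking $q$ as the strong heir-coheir gives (\ref{grid-c-shc}), and taking $p_\beta$ gives (\ref{grid-shc-c}). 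The coheir half is immediate, as $q$ (resp.\ $p_\beta$) is finitely satisfiable in $M \subseteq N_\beta$ (resp.\ in $H_\beta \subseteq N_\beta$), and the same then holds for all tensor powers; the heir half is the crux. It is secured, for both types at once and for every $n$, by adding to the list of generic requirements used to build the chain $(N_i)$ (and by choosing the ultrafilters $\Uc$ and $\Vc_\beta$ accordingly) the conditions that force the $n$-ary heir property---for each formula $\psi(y_0,\dots,y_{n-1},\zbar)$, either some realization of the relevant $n$-ary type over $N_i$ already satisfies $\psi$ with parameters from $N_i$, or no elementary extension can arrange this---exactly as in the forcing-plus-compactness argument behind Claim~2 of \cref{prop:get-heir-coheir-heir-coheir-from-weave}.

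Finally, for (\ref{grid-gslc}), assume \GCH{}, so that we may take the $\kappa^+$-saturated model $O$ to have $|O|=\kappa^+$; then the third genericity clause forces $\bigcup_{i<\kappa^+}N_i = O$, and since the chain is continuous, $\{N_\gamma : \gamma \in C\}$ is a club in $[O]^\kappa$ (closure from continuity of the chain plus closure of $C$; unboundedness because any member of $[O]^\kappa$ is covered by some $N_\gamma$ with $\gamma\in C$). Take the witnessing data $\lambda = |T|$, $N_0 = M$ (so $|N_0|\leq\lambda$), $O$ as above, and $p = r$; I claim $\Xi(r,N_0,N_\gamma)$ fails for every $\gamma\in C$, so that $\{N : N_0 \preceq N \preceq O,\ |N|\leq\kappa,\ \Xi(r,N_0,N)\}$ is disjoint from a club, hence non-stationary, which is precisely the failure of generic stationary local character. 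Given $\gamma\in C$, choose $\delta\in C$ with $\delta>\gamma$ (possible since $C$ is unbounded): then $\varphi(x,e_\delta)\in r$; since $e_\delta\models q\res N_\delta$ and $N_0 N_\gamma \subseteq N_\delta$, we have $\tp(e_\delta/N_0 N_\gamma) = q\res N_0 N_\gamma$, which extends to the $N_0$-invariant type $q$, so $e_\delta \indi_{N_0} N_\gamma$; and $p_\gamma$ is an $N_\gamma$-invariant type (a coheir finitely satisfiable in $H_\gamma\subseteq N_\gamma$) with $p_\gamma\res N_\gamma = q\res N_\gamma = \tp(e_\delta/N_\gamma)$ along which $\varphi(x,y)$ $k$-divides, so $\varphi(x,e_\delta)$ Kim-divides over $N_\gamma$. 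Thus $\Xi(r,N_0,N_\gamma)$ fails with witnesses $\varphi$ and $e_\delta$. The main obstacle throughout is the strong-heir-coheir claim inside (\ref{grid-c-shc})--(\ref{grid-shc-c}): the coheir side is trivial, but the heir side must be forced, and forced simultaneously for all tensor powers, which is what dictates the precise generic requirements in the chain construction.
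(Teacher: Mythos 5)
Your handling of (\ref{grid-weave}), (\ref{grid-hc-hc}), and (\ref{grid-gslc}) matches the paper in substance, and your use of $e_\delta$ for $\delta \in C$, $\delta > \gamma$ in (\ref{grid-gslc}) is a harmless rephrasing of the paper's use of $e_{\e(\beta)}$. For (\ref{grid-c-shc}) your plan (force the heir property for all tensor powers of $q$ as additional generic requirements in building the chain) is a workable alternative to the paper's cleaner move, which is simply to choose $N_0$ to be $|T|^+$-saturated; since $q$ is $M$-invariant with $|M|\le|T|$, saturation of the base model already yields the strong heir-coheir property over the $N_\beta$'s without touching the chain construction.

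The genuine gap is in (\ref{grid-shc-c}). You assert that $p_\beta$ can also be made a strong $N_\beta$-heir-coheir "by adding to the list of generic requirements\dots and by choosing the ultrafilters $\Uc$ and $\Vc_\beta$ accordingly," but this does not address the circularity: $H_\beta$ (the set on which $\Vc_\beta$ lives) and the model $N_\beta$ are only determined once the chain has been constructed up to stage $\beta$, and the ultrafilter $\Vc_\beta$ is then chosen \emph{after} $N_\beta$ is frozen. To force $p_\beta^{\otimes n}$ to be an heir over $N_\beta$ you would need the construction of $N_\beta$ to anticipate a filter that does not yet exist, and to do so simultaneously for every $\beta$ in a club you do not yet know. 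There is also no reason to expect that $\Fc_\beta$ (which encodes the "down-right" structure needed for Lemma~\ref{lem:down-right} and hence for $k$-dividing) extends to an ultrafilter making $p_\beta$ an heir over $N_\beta$; the two requirements are not obviously compatible. The paper sidesteps all of this by rotating the grid $90\degree$: rotating swaps strict chains and antichains in $L^2$, so after rerunning the same construction on the rotated grid, the Morley sequence in $q$ becomes an antichain (hence $k$-inconsistent) and the Morley sequences in the $p_\beta$ become strict chains (hence consistent). The strong heir-coheir $q$ is then the type along which $\varphi$ $k$-divides and the coheir $p_\beta$ the type along which it does not, which is exactly what (\ref{grid-shc-c}) asks for, with no need to upgrade $p_\beta$ at all. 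You should replace your argument for (\ref{grid-shc-c}) with this rotation argument (or supply a genuinely new construction that resolves the circularity).
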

\begin{proof}
 For (\ref{grid-c-shc}), note that if $\kappa$ is sufficiently large and the model $N_0$ is chosen to be $|T|^+$-saturated, then $q(y)$ is a strong heir-coheir. In this case, $p_\beta(y)$ and $q(y)$ witness the failure of \textup{($k$, coheir, strong heir-coheir)}--Kim's lemma over models for any $\beta \in C$.

  (\ref{grid-weave}) and (\ref{grid-hc-hc}) follow from \cref{prop:finite-depth-to-depth-omega}, \cref{thm:main-theorem}, and \cref{prop:k-weave-in-k-grid}. 

  For (\ref{grid-gslc}), note that if $\mathsf{GCH}$ holds, then $\kappa^+ = 2^\kappa$ and so $\bigcup_{i < \kappa}N_i$ is all of $O$. Assume for the sake of contradiction that $X = \{N \preceq O : N \succeq N_0,~|N| \leq \kappa,~\Xi(r,N_0,N)\}$ is stationary in $[O]^\kappa$. $\{N_\beta : \beta \in C\}$ is a club in $[O]^\kappa$, so there is a $\beta \in C$ such that $N_\beta \in X$. We have that $\varphi(x,e_{\e(\beta)}) \in r(x)$. Since $e_{\e(\beta)} \models q \res N_\beta$, we have that $e_{\e(\beta)} \indi_{N_0} N_\beta$. But we also have that $\varphi(x,e_{\e(\beta)})$ $k$-divides along $p_\beta$, which contradicts the fact that $\Xi(r,N_0,N_\beta)$. Therefore it must not be the case that $X$ is stationary in $[O]^\kappa$. Since $\kappa$ was arbitrary, this implies that $T$ fails to satisfy generic stationary local character.

  (\ref{grid-shc-c}) follows by repeating the construction given in the section with the orientation of the grid rotated by $90\degree$ (so that in particular, $\varphi(x,b)$ $k$-divides along $q$ but does not divide along $p_\beta$ for any $\beta \in C$). 
\end{proof}

This could be shown in a direct combinatorial way in the same manner as \cref{prop:k-weave-in-k-grid}, but at this point we easily have the following corollary.

\begin{cor}
  If $T$ has an infinite $k$-grid, then $T$ has ATP.
\end{cor}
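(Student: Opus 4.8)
The plan is to read this off \cref{thm:grid-theorem} together with the implications recalled in the Introduction, rather than redoing the combinatorics. Suppose $T$ has an infinite $k$-grid, say for the formula $\varphi(x,y)$. By \cref{thm:grid-theorem}\,(\ref{grid-c-shc}), $T$ fails to satisfy \textup{($k$, coheir, strong heir-coheir)}--Kim's lemma over models, so we obtain a model $M$, a formula $\psi(x,c)$, an $M$-coheir $p(y) \supseteq \tp(c/M)$, and a \emph{strong} $M$-heir-coheir $q(y) \supseteq \tp(c/M)$ such that $\psi(x,c)$ $k$-divides along $p$ but does not divide along $q$.

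The key observation is then that $q$ is strongly $M$-bi-invariant: by definition $q^{\otimes\omega}$ is an $M$-heir-coheir, hence in particular an $M$-coheir and therefore $M$-invariant, so $q$ is $\omega$-strongly $M$-bi-invariant — this is exactly the implication ``strong heir-coheir $\Rightarrow$ strongly bi-invariant'' recorded in Figure~\ref{fig:invariant}. Meanwhile $p$, being an $M$-coheir, is $M$-invariant. So we are in the situation of the last bullet of the list in the Introduction: $\psi(x,c)$ $k$-divides along the $M$-invariant type $p$ but does not divide along the strongly $M$-bi-invariant type $q$, whence $T$ has ATP by \cite[Prop.~1.7]{NCTP}.

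I do not expect any genuine obstacle here; the only point requiring care is that \cite[Prop.~1.7]{NCTP} needs ``$\psi(x,c)$ does not divide along $q$'' (not merely ``does not $k$-divide''), and this is precisely what \cref{thm:grid-theorem} supplies. As the surrounding text notes, an alternative is a purely combinatorial argument embedding an ATP-tree into the grid in the style of \cref{prop:k-weave-in-k-grid}, but the route through \cref{thm:grid-theorem} is shorter and reuses work already done.
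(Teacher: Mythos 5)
Your proof is correct and takes the same route the paper does: the paper's proof is literally the one-liner ``This follows from \cref{thm:grid-theorem} (\ref{grid-c-shc}) and \cite[Prop.~1.7]{NCTP},'' and you have simply unpacked the intermediate step that a strong heir-coheir is strongly bi-invariant. One small inaccuracy in your wording: the chain ``$M$-heir-coheir $\Rightarrow$ $M$-coheir $\Rightarrow$ $M$-invariant'' does not by itself give bi-invariance (invariance is only half the definition); what actually yields bi-invariance is that the heir-coheir condition gives $c \indu_M b$ whenever $b \models q^{\otimes\omega}\res Mc$, and $\indu$ implies $\indi$ — but since you invoke Figure~\ref{fig:invariant} for the implication, the argument stands.
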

\begin{proof}
  This follows from \cref{thm:grid-theorem} (\ref{grid-c-shc}) and \cite[Prop.~1.7]{NCTP}.
\end{proof}

Infinite $k$-grids have a certain familial resemblance to ATP, but it is unclear whether they are equivalent.

\begin{quest}
  If $T$ has ATP, does it follow that $T$ has infinite $k$-grids?
\end{quest}

Although clearly there are weaker hypotheses than $\mathsf{GCH}$ that are sufficient for \cref{thm:grid-theorem} (\ref{grid-gslc}), the apparent need for some set-theoretic assumption may be a sign that  \cref{defn:gslc} is not a good definition. Regardless, the following question is natural.

\begin{quest}
  Is \cref{thm:grid-theorem} (\ref{grid-gslc}) provable in $\ZFC$?
\end{quest}

(\ref{grid-c-shc}) and (\ref{grid-hc-hc}) together also suggest the following question.

\begin{quest}
  If $T$ has an infinite $k$-grid, does it follow that $T$ fails to satisfy \textup{($k$, heir-coheir, strong heir-coheir)}--Kim's lemma over models? What about \textup{($k$, strong heir-coheir, heir-coheir)}--Kim's lemma or \textup{($k$, strong heir-coheir, strong heir-coheir)}--Kim's lemma over models?
\end{quest}

Given the simple form of \cref{defn:k-grid}, it seems plausible that one might be able to show that the existence of a $k$-grid in a theory $T$ entails the existence of a $2$-grid (analogously to how $k$-ATP implies $2$-ATP \cite[Lem.~3.20]{Ahn2022}).

\begin{quest}
  If a theory $T$ has a $k$-grid, does it follow that it also has a $2$-grid (possibly for a different formula)?
\end{quest}

What can be said is that both of these conditions imply NPM$^{(2)}$ in the sense of \cite[Def.~6.1]{Bailetti2024}. This follows from the relatively easy fact that PM$^{(2)}$ is equivalent to the existence of a consistency-inconsistency pattern indexed by the random graph.

\begin{defn}
  A theory $T$ admits a \emph{random graph consistency-inconsistency pattern (for $\varphi(x,y)$)} if there is a random graph $(V,E)$ and a family of parameters $(b_v : v \in V)$ such that for any $V_0 \subseteq V$, $\{\varphi(x,b_v) : v \in V_0\}$ is consistent if and only if $V_0$ is an anticlique.
\end{defn}

Clearly we have that a random graph consistency-inconsistency pattern for a formula $\varphi(x,y)$ entails both the existence of a $2$-grid for $\varphi(x,y)$ and the admission arbitrary cograph consistency-inconsistency patterns for $\varphi(x,y)$. It seems unlikely that converses of these implications are true, but at the moment no examples separating any of the conditions between NBTP and NPM$^{(2)}$ are known (see Figure~\ref{fig:implications}).

\begin{quest}
  If a theory $T$ has a $2$-grid or admits arbitrary cograph consistency-inconsistency patterns (for a single formula), does it follow that it admits a random graph consistency-inconsistency pattern?
\end{quest}

The only dead end in Figure~\ref{fig:implications} is ($k$, strongly bi-invariant, strongly bi-invariant)--Kim's lemma. Given the adjacent implications, the following question is reasonable.

\begin{quest}
  Does PM$^{(2)}$ entail the failure of \textup{($2$, strongly bi-invariant, strongly bi-invariant)}--Kim's lemma?
\end{quest}

Finally, it should be noted that NSOP$_4$ and binarity\footnote{A theory $T$ is \emph{binary} if every formula in $T$ is equivalent to a Boolean combination of formulas with two free variables.} do not entail the even weaker\footnote{Recall that definable types over models are strongly bi-invariant.} statement of ($2$, definable, definable)--Kim's lemma over models.

\begin{prop}
  For any non-empty set of parameters $A$ in the theory of the triangle-free random graph, there are two $A$-definable types $p(y_0,y_1)$ and $q(y_0,y_1)$ with $p \res A = q \res A$ such that $\varphi(x,y_0,y_1) = x \mathrel{E} y_0 \wedge x \mathrel{E} y_1$ $2$-divides with regards to $p$ but does not divide with regards to $q$.
\end{prop}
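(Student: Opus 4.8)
The plan is to exploit quantifier elimination for the theory $T$ of the triangle-free random graph to turn the statement into a small graph-combinatorial construction. By quantifier elimination, a complete global type $p(y_0,y_1)$ with $y_0 \ne y_1$, $y_0,y_1 \notin \Mb$, and $\neg(y_0 \mathrel{E} y_1)$ is determined by its two neighbourhood sets $S_i := \{m \in \Mb : (y_i \mathrel{E} m) \in p\}$; it is consistent iff each $S_i$ is an anticlique (a common $E$-neighbour of an edge would be a triangle), and it is $A$-definable iff both $S_i$ are $A$-definable. Likewise $\varphi(x,d_0,d_1) \wedge \varphi(x,d_0',d_1')$ is consistent iff $\{d_0,d_1,d_0',d_1'\}$ is an anticlique, and an infinite anticlique has a common $E$-neighbour by the extension axioms. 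So the relevant dictionary is: $\varphi$ $2$-divides along $p$ iff in a Morley sequence $(d_0^i,d_1^i)_{i<\omega}$ of $p$ each set $\{d_0^i,d_1^i,d_0^j,d_1^j\}$ (for $i<j$) contains an edge, while $\varphi$ does not divide along $q$ iff in a Morley sequence $(c_0^i,c_1^i)_{i<\omega}$ of $q$ the whole union $\{c_0^i,c_1^i : i<\omega\}$ is an anticlique. Finally, since $p\res A = q\res A$ forces the four neighbourhood sets to meet $A$ identically, and since a single instance of $\varphi$ is inconsistent when $y_0 \mathrel{E} y_1$, both $p$ and $q$ must contain $\neg(y_0 \mathrel{E} y_1)$.

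Fix $a_\ast \in A$. For $p$ I would take the type with $S_0^p = N(a_\ast) := \{z : z \mathrel{E} a_\ast\}$ and $S_1^p = \{a_\ast\}$; both are $A$-definable anticliques, the first because $T$ is triangle-free. A direct check from quantifier elimination shows its Morley sequence is a ``staircase'': every $d_1^i$ is $E$-adjacent to $a_\ast$, every $d_0^i$ is $E$-adjacent to all of $N(a_\ast)$ but not to $a_\ast$, the pairs $\{d_0^i,d_1^i\}$ are non-edges, and for $i<j$ one has $d_0^j \mathrel{E} d_1^i$ precisely because $d_1^i \in N(a_\ast) = S_0^p$, with no further edges among the $d$'s. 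Hence each $\{d_0^i,d_1^i,d_0^j,d_1^j\}$ contains the edge $\{d_0^j,d_1^i\}$, so $\varphi$ $2$-divides along $p$.

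For $q$ I would keep $S_1^q = \{a_\ast\}$; then $p\res A = q\res A$ determines $q$ except for $S_0^q$, on which the only constraint is $S_0^q \cap A = N(a_\ast)\cap A$. The point is to choose $S_0^q$ to be an $A$-definable anticlique contained in $N(a_\ast)$ with $S_0^q \cap A = N(a_\ast)\cap A$ that contains no realization of $t := \tp(y_1/A)$ (``$E$-adjacent to $a_\ast$ and to no other element of $A$''). Given such an $S_0^q$, running through the possible edges in a Morley sequence of $q$ shows $\{c_0^i,c_1^i : i<\omega\}$ is an anticlique: the would-be edge $c_0^j \mathrel{E} c_1^i$ is killed because $c_1^i \models t$ and $t(\Mb) \cap S_0^q = \varnothing$, and the remaining potential edges vanish because $c_0^i, c_1^i \notin A$. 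So $\varphi$ does not divide along $q$, and $p\res A = q\res A$ by construction.

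The main obstacle is exactly the production of $S_0^q$ for an arbitrary non-empty $A$. When $N(a_\ast)\cap A$ is finite — in particular whenever $A$ is finite, an anticlique, or has a vertex of finite $A$-degree — one can simply take $S_0^q = N(a_\ast)\cap A$: a finite, hence $A$-definable, subanticlique of $N(a_\ast)$ disjoint from $t(\Mb)$ since $t(\Mb)\cap A = \varnothing$. In general I would instead pick a parameter $a_2 \in A$ with $\neg(a_2 \mathrel{E} a_\ast)$ that is $E$-adjacent to every element of $N(a_\ast)\cap A$ and set $S_0^q = N(a_\ast) \cap N(a_2)$: this is an $A$-definable anticlique inside $N(a_\ast)$, it meets $A$ in exactly $N(a_\ast)\cap A$ by the choice of $a_2$, and it misses $t(\Mb)$ because realizations of $t$ are not $E$-adjacent to $a_2$. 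Showing that $a_\ast$ (and then $a_2$, or more generally a finite tuple from $A$ playing its role) can always be found from $A$ — equivalently, separately treating the configurations in which $A$ induces a graph of infinite minimum degree with no such ``dominating'' non-neighbour available inside $A$ — is the delicate point, which I expect to handle by a short case analysis on the graph induced on $A$. All the remaining verifications (consistency and $A$-definability of $p$ and $q$, and that the exhibited families of instances of $\varphi$ are $2$-inconsistent, respectively consistent) are immediate from quantifier elimination and the extension axioms.
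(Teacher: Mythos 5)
Your proposal uses the same $p$ as the paper's proof, with $S_0^p = N(a_\ast)$ and $S_1^p = \{a_\ast\}$, but diverges on $q$: the paper simply sets $S_0^q = \varnothing$ (i.e.\ $q \vdash \neg\, y_0 \mathrel{E} b$ for every $b$), whereas you observe --- correctly --- that this gives $p\res A = q\res A$ only when no element of $A$ is $E$-adjacent to $a_\ast$, and you attempt to repair $S_0^q$ so that it meets $A$ in exactly $N(a_\ast)\cap A$ while avoiding $\tp(y_1/A)$. Your repair works verbatim when $N(a_\ast)\cap A$ is finite, and your checks of consistency, definability, $2$-inconsistency along $p$, and consistency along $q$ are all correct.

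The acknowledged gap, however, cannot be closed. The ``dominating'' parameter $a_2$ you want (with $\neg(a_2\mathrel{E}a_\ast)$ and $N(a_\ast)\cap A\subseteq N(a_2)$) need not exist in $A$: if $A$ is a model, then by the extension axiom applied inside $A$, for every $a_2\neq a_\ast$ there is some $b\in A$ with $b\mathrel{E}a_\ast$ and $\neg(b\mathrel{E}a_2)$, and the same objection applies to any finite tuple in place of $a_2$. More fundamentally, no case analysis on the graph induced on $A$ will rescue the statement for $A$ a model. Over a model, an $A$-definable type is determined by its restriction to $A$, since two $A$-formulas agreeing on $A$ are logically equivalent; hence $p\res A = q\res A$ already forces $p = q$. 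In your dictionary, this is because $\tp(y_1/A)$ is finitely satisfiable in $A$ when $A$ is a model, so a separating $A$-formula $\theta^p(y_1)\wedge\neg\theta^q(y_1)$ in that type would have a witness $b\in A$, contradicting $S_0^p\cap A = S_0^q\cap A$; the same goes for $\tp(y_0/A)$ and for $S_1$. So your hesitation at the ``delicate point'' is well founded: the construction you sketch cannot be completed, and the proposition as stated fails for $A$ a model --- an issue which the paper's own proof (taking $S_0^q = \varnothing$ without checking $N(a_\ast)\cap A = \varnothing$) does not flag either. The argument is sound for $A$ an anticlique, or more generally whenever some $a_\ast\in A$ has $N(a_\ast)\cap A$ finite, which is probably what the proposition should say.
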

\begin{proof}
  Fix $a \in A$. Let $p(y_0,y_1)$ be the $A$-invariant type satisfying that
  \begin{itemize}
  \item $p(y_0,y_1) \vdash \neg y_0 \mathrel{E} y_1$,
  \item for any $b$, $p(y_0,y_1) \vdash y_0 \mathrel{E} b$ if and only if $b \mathrel{E} a$, and
  \item for any $b$, $p(y_0,y_1) \vdash y_1 \mathrel{E} b$ if and only if $b = a$.
  \end{itemize}
  Let $q(y_0,y_1)$ be the $A$-invariant type satisfying that
  \begin{itemize}
  \item $q(y_0,y_1) \vdash \neg y_0 \mathrel{E} y_1$,
  \item for any $b$, $p(y_0,y_1) \vdash \neg y_0 \mathrel{E} b$, and
  \item for any $b$, $p(y_0,y_1) \vdash y_1 \mathrel{E} b$ if and only if $b = a$.
  \end{itemize}
  These are both definable types. By quantifier elimination, it is immediate that $p \res A = q \res A$. A Morley sequence $(a_i^0,a_i^1 : i < \omega)$ generated by $p$ satisfies that for any $i < j < \omega$, $a_i^1 \mathrel{E} a_j^0$, and so $\{\varphi(x,a_i^0,a_i^1) : i < \omega\}$ is $2$-inconsistent. For any Morley sequence $(b_i^0,b_i^1 : i < \omega)$ generated by $q$, $\{b_i^0,b_i^1 : i < \omega\}$ is an anticlique, and so $\{\varphi(x,b_i^0,b_i^1) : i < \omega\}$ is consistent.
\end{proof}

This leads into one last question. It is relatively straightforward to show that \emph{any} first-order theory satisfies ($\omega$, arbitrary, generically stable)--Kim's lemma (where `arbitrary' means ordinary dividing, extending \cref{defn:Kim's-lemmas} in the obvious way to include dividing not necessarily along an invariant type). Examples of definable coheirs that are not finitely approximated (and therefore not generically stable) are hard to come by. The only known example, described in \cite[Sec.~7]{KeislerMeasuresWild}, lives in a fairly complicated theory $T_{\nicefrac{1}{2}}^\infty$, and characterizing dividing in this theory seems challenging.

\begin{quest}
  What can be said about variants of Kim's lemma involving finitely approximated types or definable coheirs?
\end{quest}

\begin{amssidewaysfigure}
  \centering

  \vspace{14.5em}
  
\[\begin{tikzcd}
	& {k\text{-NBTP}} & \begin{array}{c} \text{New Kim's Lemma} \\ \text{for }k\text{-dividing} \\ (k,\text{ inv., Kim-str.~inv.})\text{--KL} \end{array} & \\
	& {k\text{-NCTP}} & {(k,\text{ inv., bi-inv.})\text{--KL}} & \\
	{2\text{-NATP}} & {k\text{-NATP}} & & \\
	& {(k,\text{ inv., str.~bi-inv.})\text{--KL}} & {(k,\text{ bi-inv., bi-inv.})\text{--KL}} & \begin{array}{c} \text{No }(k,1,1)\text{-weave} \\ \text{of depth }\omega \end{array} \\
	\begin{array}{c} \text{Generic stationary} \\ \text{local character} \end{array} &  & {(k,\text{ bi-inv., str.~bi-inv.})\text{--KL}}  & \begin{array}{c} \text{No }(k,1,\omega)\text{-weave} \\ \text{of depth }\omega \end{array} \\
	&& {(k,\text{ str.~bi-inv., str.~bi-inv.})\text{--KL}} & \begin{array}{c} \text{No }(k,\omega,\omega)\text{-weave} \\ \text{of depth }\omega \end{array} \\
	& {\text{No infinite }k\text{-grid}} & {\hphantom{aaaaaaaaaaaaaaaaaaaaaaaaaaaaaaaaa}} \\
	&& {\text{NPM}^{(2)}} & \begin{array}{c} \text{Not all cograph} \\ \text{configurations} \end{array}
  \arrow["{\text{\cite[Thm.~5.2]{NKL}}}"', from=1-2, to=1-3]
  \arrow[from=1-2, to=2-2]
  \arrow[from=1-3, to=2-3]
	\arrow["{\text{\cite[Thm.~1.8]{NCTP}}}"', tail reversed, from=2-2, to=2-3]
	\arrow[from=2-2, to=3-2]
	\arrow[from=2-3, to=4-3]
	\arrow[from=4-2, to=5-3]
	\arrow["{\text{\cite[Lem.~3.20]{Ahn2022}}}"', tail reversed, from=3-1, to=3-2]
	\arrow["{\text{\cite[Prop.~1.7]{NCTP}}}", from=3-2, to=4-2]
	\arrow["{\text{\cite[Prop.~3.10]{NCTP}}}"', from=4-2, to=5-1] %
	\arrow["{\text{Thm.~\ref{thm:main-theorem}}}"', tail reversed, from=4-3, to=4-4]
	\arrow[from=4-3, to=5-3]
	\arrow[from=4-4, to=5-4]
	\arrow["\begin{array}{c} \text{Thm.~\ref{thm:grid-theorem}} \\ (\mathsf{GCH}) \end{array}"', dashed, from=5-1, to=7-2]
	\arrow["{\text{Thm.~\ref{thm:grid-theorem}}}", from=4-2, to=7-2]
	\arrow[from=5-3, to=6-3]
	\arrow["{\text{Thm.~\ref{thm:failure-of-Kim-bi-invariant-to-weaves}}}", from=5-4, to=5-3]
	\arrow[from=5-4, to=6-4]
	\arrow["{\text{Thm.~\ref{thm:failure-of-Kim-bi-invariant-to-weaves}}}", from=6-4, to=6-3]
	\arrow["{\text{Prop.~\ref{prop:k-weave-in-k-grid}}}", curve={height=-15pt}, from=6-4, to=7-2]
  \arrow[shift right = 3, "\begin{array}{c} (k=2) \end{array}"', dashed, from=6-4, to=5-4]
	\arrow["{\text{Prop.~\ref{prop:weave-cograph-equiv}}}"', from=6-4, to=8-4]
  \arrow[shift right = 3, "\begin{array}{c} \text{Prop.~\ref{prop:weave-cograph-equiv}} \\ (k=2) \end{array}"', dashed, from=8-4, to=6-4]
	\arrow["{}"', from=7-2, to=8-3] %
	\arrow["{}", from=8-4, to=8-3] %
\end{tikzcd}\]
\caption[caption]{Diagram of some known implications (for a fixed $k$).}
  \label{fig:implications}
\end{amssidewaysfigure}

\newpage

\bibliographystyle{plain}
\bibliography{../ref}

\end{document}